\newtheorem{thm}{Theorem}
\newtheorem{prop}[thm]{Proposition}
\newtheorem{lem}[thm]{Lemma}
\newtheorem{cor}[thm]{Corollary}
\newtheorem{remark}[thm]{Remark}
\newcommand{\myurl}[1]{\href{#1}{#1}}
\newcommand{\pheq}{\phantom{=}\ }
\newcommand{\eqdef}{\coloneqq}
\newcommand{\al}{\alpha}
\newcommand{\om}{\omega}
\newcommand{\ga}{\gamma}
\newcommand{\be}{\beta}
\newcommand{\de}{\delta}
\newcommand{\eps}{\varepsilon}
\newcommand{\si}{\sigma}
\newcommand{\ka}{\varkappa}
\newcommand{\la}{\lambda}
\renewcommand{\phi}{\varphi}
\newcommand{\Tht}{\Theta}
\newcommand{\tht}{\vartheta}
\newcommand{\bC}{\mathbb{C}}
\newcommand{\bR}{\mathbb{R}}
\newcommand{\cK}{\mathcal{K}}
\newcommand{\clos}{\operatorname{cl}}
\renewcommand{\Re}{\operatorname{Re}}
\newcommand{\laasympt}{\lambda^{\operatorname{asympt}}}
\newcommand{\medstrut}{\vphantom{\int_0^1}}
\newcommand{\hstrut}{\mbox{}\ \mbox{}}
\newcommand{\bigstrut}{\vphantom{\int_{0_0}^{1^1}}}
\newcommand{\rmA}{\mathrm{a}}
\newcommand{\rmB}{\mathrm{b}}
\newcommand{\rmC}{\mathrm{c}}
\newcommand{\rmD}{\mathrm{d}}
\definecolor{mygreen}{rgb}{0.0,0.75,0.0}
\title{Eigenvalues
of the laplacian matrices \\ of the cycles
with one weighted edge}
\author{Sergei M. Grudsky,
Egor A. Maximenko and Alejandro Soto-Gonz\'alez}
\date{\today\vspace{-6em}}
\begin{document}

\maketitle

\let\thefootnote\relax
\footnote{Sergei M. Grudsky,
CINVESTAV del IPN,
Departamento de Matem\'aticas,
Apartado Postal 07360,
Ciudad de M\'exico,
Mexico.
\href{mailto:grudsky@math.cinvestav.mx}{grudsky@math.cinvestav.mx},
\myurl{https://orcid.org/0000-0002-3748-5449},\\
\myurl{https://publons.com/researcher/2095797/sergei-m-grudsky}.
\vspace*{0.5em}
}

\footnote{Egor A. Maximenko, Instituto Polit\'ecnico Nacional,
Escuela Superior de F\'isica y Matem\'aticas,
Apartado Postal 07730,
Ciudad de M\'exico,
Mexico.
\href{mailto:emaximenko@ipn.mx}{emaximenko@ipn.mx},
\myurl{https://orcid.org/0000-0002-1497-4338}.
\vspace*{0.5em}
}

\footnote{Alejandro Soto-Gonz\'{a}lez, CINVESTAV  del IPN,
Departamento de Matem\'aticas,
Apartado Postal 07360,
Ciudad de M\'exico,
Mexico.
\href{mailto:asoto@math.cinvestav.mx}{asoto@math.cinvestav.mx},
\myurl{https://orcid.org/0000-0003-2419-4754}.
}

\footnote{

\medskip
\textbf{Funding.}

\medskip
The research of the first author has been supported by CONACYT (Mexico) project ``Ciencia de Frontera'' FORDECYT-PRONACES/61517/2020
and by Regional Mathematical Center of the Southern Federal University with the support of
the Ministry of Science and Higher Education of Russia, Agreement 075-02-2021-1386.

\medskip
The research of the second author has been supported by CONACYT (Mexico) project ``Ciencia de Frontera'' FORDECYT-PRONACES/61517/2020 and IPN-SIP projects
(Instituto Polit\'{e}cnico Nacional, Mexico).

\medskip
The research of the third author has been supported by CONACYT (Mexico) PhD scholarship.
}

\begin{abstract}
In this paper we study the eigenvalues of the laplacian matrices of the cyclic graphs with one edge of weight $\alpha$
and the others of weight $1$.
We denote by $n$ the order of the graph and suppose that
$n$ tends to infinity.
We notice that the characteristic polynomial and the eigenvalues depend only on $\operatorname{Re}(\alpha)$.
After that, through the rest of the paper we suppose that $0<\alpha<1$.
It is easy to see that the eigenvalues belong to $[0,4]$ and are asymptotically distributed as the function $g(x)=4\sin^2(x/2)$ on $[0,\pi]$. 
We obtain a series of results about the individual behavior of the eigenvalues. 
First, we describe more precisely their localization in subintervals of $[0,4]$.
Second, we transform the characteristic equation to a form convenient to solve by numerical methods.
In particular, we prove that Newton's method converges for every $n\ge3$.
Third, we derive asymptotic formulas for all eigenvalues,
where the errors are uniformly bounded with respect to the number of the eigenvalue.

\medskip
\textbf{Keywords}: 
eigenvalue, laplacian matrix, weighted cycle, periodic Jacobi matrix, Toeplitz matrix, tridiagonal matrix,
perturbation, asymptotic expansion.

\medskip
\textbf{Mathematics Subject Classification (2020)}:
05C50, 15B05, 47B36, 15A18, 41A60, 65F15, 82B20.
\end{abstract}

\clearpage

\tableofcontents

\section{Introduction}

For every natural $n\ge 3$ and every real $\al$, we denote by $G_{\al,n}$ the cyclic graph of order $n$,
where the edge between the vertices $1$ and $n$ has weight $\al$, and all other edges have weights $1$.
See Figure~\ref{fig:graph} for $n=7$.
\begin{figure}[th]
\centering
\begin{tikzpicture}
\foreach \j/\k in {0/1,1/2,2/3,3/4,4/5,5/6,6/7} {
  \node (N\j) at (\j*360/7:2cm) [draw, circle] {$\k$};
}
\foreach \j/\k in {0/1,1/2,2/3,3/4,4/5,5/6,6/0} {
  \draw (N\j) -- (N\k);
}
\foreach \j in {0,1,2,3,4,5} {
  \node at (\j*360/7 + 360/14:1.95cm) {$\scriptstyle 1$};
}
\node at (-360/14:1.95cm) {$\scriptstyle\al$};
\end{tikzpicture}
\caption{Graph $G_{\al,7}$\label{fig:graph}}
\end{figure}

\noindent
Let $L_{\al,n}$ be the laplacian matrix of $G_{\al,n}$.
For example,
\begin{equation}\label{eq:lapl_matrix}
L_{\al,7}
=
\begin{bmatrix*}[r]
1+\al & -1 & 0 & 0 & 0 & 0 & -\al\phantom{\al} \\
-1\phantom{\al} & 2 & -1 & 0 & 0 & 0 & 0\phantom{\al} \\
0\phantom{\al} & -1 & 2 & -1 & 0 & 0 & 0\phantom{\al} \\
0\phantom{\al} & 0 & -1 & 2 & -1 & 0 & 0\phantom{\al} \\
0\phantom{\al} & 0 & 0 & -1 & 2 & -1 & 0\phantom{\al} \\
0\phantom{\al} & 0 & 0 & 0 & -1 & 2 & -1\phantom{\al} \\
-\al\phantom{\al} & 0 & 0 & 0 &  0 & -1 & 1+\al
\end{bmatrix*}.
\end{equation}
The spectral decomposition of $L_{\al,n}$ is crucial to solve the heat and wave equations on the graph $G_{\al,n}$, i.e.,
the linear systems of differential equations of the form $f'(t)=-c L_{\al,n} f(t)$ and $f''(t)=c L_{\al,n} f(t)$, where $f(t)=[f_j(t)]_{j=1}^n$
and $c$ is some coefficient.
Moreover, laplacian matrices appear in the study in of random walks on graphs, electrical flows, network dynamics, and many other physical phenomena; see, e.g.~\cite{M2012}.

The matrices $L_{\al,n}$ can also be viewed as periodic Jacobi matrices
and as real symmetric Toeplitz matrices with perturbations on the corners $(1,1)$, $(1,n)$, $(n,1)$, and $(n,n)$.
The eigenvalues are explicitly known only for some very special matrix families from these classes; mainly when the eigenvectors are the columns of the DCT or DST matrices~\cite{BYR2006}.

Over the past decade, there has been an increasing interest in Toeplitz matrices with certain perturbations, see~\cite{BPZ2020,BFGM2014,BYR2006,FK2020,DV2009,GT2009,OA2014,R2017,TS2017,VHB2018,ZJJ2019}, or~\cite{KST1999,NR2019,SM2014,W2008} for more general researches.
In~\cite{FK2020,DV2009} the authors find the characteristic polynomial for some cases of Toeplitz matrices with corner perturbations.
The methods used in the present paper are similar to the ones from~\cite{GMS2021}, where we studied the hermitian tridiagonal Toeplitz matrices with perturbations in the positions $(1,n)$ and $(n,1)$.

The asymptotic distribution of hermitian Toeplitz matrices with small-rank perturbations is described by analogs of Szeg\H{o} theorem~\cite{GS2017,Tilli1998,Tyrtyshnikov1996}.
The individual behavior of the eigenvalues is known only for some particular cases, including hermitian Toeplitz matrices with simple-loop symbols~\cite{BBGM2018,BGM2017,BBGM2015,BBGM2017}.

In~\cite{GMS2021} we studied the eigenvalues of the hermitian tridiagonal Toeplitz matrices with diagonals $-1,2-1$ and values $-\al$ and $-\overline{\al}$ on the corners $(n,1)$ and $(1,n)$, respectively.
In the present paper, we put $1+\al$ instead of $2$ in the entries $(1,1)$ and $(n,n)$.

The matrices $L_{\al,n}$ are real and symmetric, thus their eigenvalues are real. We enumerate them in the ascending order:
\begin{equation}\label{eq:eigvals_order}
\la_{\al,n,1}
\leq\la_{\al,n,2}
\leq\cdots
\leq\la_{\al,n,n}.
\end{equation}
It is well known that every laplacian matrix has eigenvalue $0$ associated to the eigenvector $[1,\ldots,1]^\top$. 

For $\al=0$, the eigenvalues of $L_{0,n}$ are $\la_{0,n,j} = g((j-1)\pi/n)$, where $g$ is defined by \begin{equation}\label{eq:g_main}
    g(x) \eqdef 2-2\cos(x)
        = 4\sin^2\frac{x}{2}, \quad x\in[0,\pi].
\end{equation}
The normalized eigenvectors of $L_{0,n}$ are the columns of the matrix DCT-II, see~\cite[formula (2.53) and (2.54)]{BYR2006}.

For $\al=1$, the matrices $L_{1,n}$ are circulant, and their eigenvalues and eigenvectors are well known, see, e.g.~\cite{GMS2021}. 

It is also well known that the eigenvalues of tridiagonal real symmetric Toeplitz matrices $T_n(g)$ generated by $g$ are $g(j\pi/(n+1))$. 

Except for the cases $\al=0$, $\al=1$, and $\al=1/2$ (see Remark~\ref{rem:case_al=1/2}), we do not know explicit formulas for all eigenvalues of $L_{\al,n}$.

For $\al<0$ (resp., $\al>1$), it can be shown that the first (resp., last) eigenvalue goes out the interval $[0,4]$ and tends exponentially to $4\al^2/(2\al-1)$.
We are going to present the corresponding results in another paper.

In this paper we suppose that $0<\al<1$.

Our matrices $L_{\al,n}$ can be obtained by small-rank perturbations from $T_n(g)$, $L_{0,n}$ or $L_{1,n}$.
The Cauchy interlacing theorem or the theory of locally Toeplitz sequences
\cite{GS2017,Tilli1998,Tyrtyshnikov1996} easily imply that the eigenvalues of $L_{\al,n}$ are asymptotically distributed as the values of $g$ on $[0,\pi]$, as $n$ tends to infinity.

We obtain much more precise results about the eigenvalues of $L_{\al,n}$.
Namely, we find exact eigenvalues of the form $g((j-1) \pi /n)$, with $j$ odd, and localize the other eigenvalues in the intervals of the form $(g((j-1)\pi/n)$, $g(j\pi/n))$ with $j$ even.
 
We transform the characteristic equation to the form $x =  f_{\al,n,j}(x)$, where $f_{\al,n,j}$ is ``slow'', i.e., the derivative of $f_{\al,n,j}$ is small when $n$ is large.
After that, this equation is convenient to solve by the fixed point method and Newton's method (also known as Newton--Raphson or gradient method).

On this base, we derive asymptotic formulas for all eigenvalues $\la_{\al,n,j}$, 
where the errors are uniformly bounded on $j$.

For $\al$ in $\bC$,
we consider the $n\times n$ complex laplacian matrix $L_{\al,n}$, for example,
\begin{equation}
\label{eq:lapl_matrix_complex}
L_{\al,7}
=
\begin{bmatrix*}[r]
1+\overline{\al}& -1 & 0 & 0 & 0 & 0 & -\overline{\al}\phantom{\al} \\
-1\phantom{\al} & 2 & -1 & 0 & 0 & 0 & 0\phantom{\al} \\
0\phantom{\al} & -1 & 2 & -1 & 0 & 0 & 0\phantom{\al} \\
0\phantom{\al} & 0 & -1 & 2 & -1 & 0 & 0\phantom{\al} \\
0\phantom{\al} & 0 & 0 & -1 & 2 & -1 & 0\phantom{\al} \\
0\phantom{\al} & 0 & 0 & 0 & -1 & 2 & -1\phantom{\al} \\
-\al\phantom{\al} & 0 & 0 & 0 &  0 & -1 & 1+\al
\end{bmatrix*}.
\end{equation}
These matrices appear in the study of problems related to networked multi-agent systems,
see~\cite{LWHF2014} for investigations in this area.
In Proposition~\ref{prop:L_char_pol_via_Cheb} we prove that the characteristic polynomial of $L_{\al,n}$
only depends on $\Re(\al)$, i.e., $\det(\la I_n - L_{\al,n}) = \det(\la I_n - L_{\Re(\al),n})$.

We present the main results of this paper in Section~\ref{sec:Main_results}, the correspondent proofs lie in Section~\ref{sec:localization_eigenvalues} (localization), Section~\ref{sec:main_equation} (main equation), Section~\ref{sec:fixed_point} (fixed point method), Sections~\ref{sec:Newton_convex} and~\ref{sec:solve_by_Newton} (Newton's method), Section~\ref{sec:asymptotic_formulas} (asymptotic formulas), Section~\ref{sec:eigvec_norm} (norms of the eigenvectors).
In Section~\ref{sec:trid_toep_corner_per} we give formulas for the characteristic polynomial and eigenvectors of general tridiagonal symmetric Toeplitz matrices with perturbations in the corners $(1,1)$, $(1,n)$, $(n,1)$ and $(n,n)$; our formulas are equivalent to Yueh and Cheng~\cite{YuehCheng2008}.
In Section~\ref{sec:num_exp} we show the results of some numerical tests.

\section{Main results}\label{sec:Main_results}

We treat $\al$ as a fixed parameter, supposing that $0<\al<1$.

It is well known that $0$ is the least eigenvalue of $L_{\al,n}$.
A direct application of the Gershgorin disks theorem~\cite[Theorem 6.1.1]{HornJohnson2013} shows that all eigenvalues of $L_{\al,n}$ belong to $[0,4]$. However, we give a more precise localization. 

\begin{thm}[eigenvalues' localization]\label{thm:Localization_weak_eigenvals}
For every $n\ge3$,
    \begin{align}\label{eq:localization_eigvals_weak_odd}
        \la_{\al,n,j}& = g\left(\frac{(j-1)\pi}{n}\right) \quad
        (j\ \text{odd},\ 1\le j\le n), \\\label{eq:localization_eigvals_weak_even}
        g\left(\frac{(j-1)\pi}{n}\right) &< \la_{\al,n,j}<g\left(\frac{j\pi}{n}\right)\quad
        (j\ \text{even},\ 1\le j\le n).
    \end{align}
\end{thm}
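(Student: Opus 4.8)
The plan is to write down the characteristic polynomial of $L_{\al,n}$ explicitly (using the formula from Section~\ref{sec:trid_toep_corner_per}, equivalently Yueh--Cheng) and factor it so that the odd-index eigenvalues fall out as roots of an elementary factor independent of $\al$, while the even-index eigenvalues are controlled by an interlacing/sign-change argument. Concretely, I would substitute $\la = g(x) = 2-2\cos x$ and express $\det(\la I_n - L_{\al,n})$ in terms of Chebyshev polynomials of $\cos x$. Since $L_{0,n}$ has eigenvalues $g((j-1)\pi/n)$ for $1\le j\le n$, those numbers are roots of $\det(\la I_n - L_{0,n})$; the first task is to show they remain roots of $\det(\la I_n - L_{\al,n})$ exactly when $j$ is odd. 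I expect the characteristic polynomial to take the shape $p_{\al,n}(\la) = (\text{factor}_1(\la)) \cdot (\text{factor}_2(\la)) - \al(1-\al)(\text{remainder}(\la))$ or similar, and the key algebraic identity will be that at $\la = g((j-1)\pi/n)$ with $j$ odd (so $(j-1)$ even, $(j-1)\pi/n$ an even multiple of $\pi/n$), the $\al$-dependent perturbation term vanishes identically. This is the computational heart of the argument.

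Next, for the even indices I would use the Cauchy interlacing theorem together with the exact values just established. The matrix $T_n(g)$ (tridiagonal Toeplitz, Dirichlet-type) has eigenvalues $g(j\pi/(n+1))$, and $L_{0,n}$ has eigenvalues $g((j-1)\pi/n)$; both are rank-$O(1)$ perturbations of $L_{\al,n}$, so interlacing pins each $\la_{\al,n,j}$ into a narrow window. More precisely, since $L_{\al,n}$ differs from $L_{0,n}$ in a controlled way and we already know $\la_{\al,n,j} = g((j-1)\pi/n)$ for $j$ odd, the remaining (even-index) eigenvalues must lie strictly between consecutive odd-index values, i.e.\ in $(g((j-1)\pi/n), g((j+1)\pi/n))$ — and I must sharpen this to the claimed $(g((j-1)\pi/n), g(j\pi/n))$. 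For that sharpening I would evaluate the characteristic polynomial (or its Chebyshev-form factor) at the two endpoints $g((j-1)\pi/n)$ and $g(j\pi/n)$ and show the sign changes there, using monotonicity of $g$ on $[0,\pi]$ and the explicit trigonometric values of the Chebyshev terms at arguments $(j-1)\pi/n$ and $j\pi/n$; the strictness of both inequalities should come from the perturbation term being nonzero (this is where $0<\al<1$, hence $\al(1-\al)\ne 0$, enters).

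The main obstacle will be the sign-analysis at the endpoint $g(j\pi/n)$ for even $j$: at $x = j\pi/n$ with $j$ even, $(j-1)$ is odd, and I need to verify that the characteristic polynomial does not vanish there and has the correct sign relative to its value at $x=(j-1)\pi/n$, so that exactly one root sits in the open interval. This requires carefully tracking the signs of $\sin$ and $\cos$ of multiples of $\pi/n$ in the Chebyshev expressions and confirming the perturbation term has a definite sign (or at least does not cancel the leading behavior) — a somewhat delicate but finite case check. A secondary, milder difficulty is bookkeeping at the boundary indices $j=1$ (trivially $\la=0$) and $j$ near $n$, where the floor/parity of $n$ interacts with which index is the largest; I would handle $n$ even and $n$ odd separately if needed. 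Once the characteristic polynomial is in hand and its endpoint signs are settled, both displayed statements~\eqref{eq:localization_eigvals_weak_odd} and~\eqref{eq:localization_eigvals_weak_even} follow by the intermediate value theorem and a count of roots.
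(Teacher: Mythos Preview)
Your overall strategy---express the characteristic polynomial through Chebyshev polynomials, isolate an $\al$-independent factor whose roots give the odd-index eigenvalues, then do a sign-change argument at the mesh points for the even-index ones---is exactly the paper's approach. Two points of comparison are worth noting.

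First, the form you conjecture for $D_{\al,n}$ (``$(\text{factor}_1)(\text{factor}_2) - \al(1-\al)(\text{remainder})$'') is not what actually happens: after the substitution $\la = 4-t^2$ the paper obtains a genuine \emph{factorization}
\[
D_{\al,n}(4-t^2) = \frac{2(-1)^n}{t}\,p_n(t)\,q_{\al,n}(t),
\]
with $p_n(t)=(t^2-4)U_{n-1}(t/2)$ completely $\al$-free and $q_{\al,n}(t)=(1-\al)T_n(t/2)+\al\,\tfrac{t}{2}U_{n-1}(t/2)$ carrying all the $\al$-dependence (Proposition~\ref{prop:pol_char_factorized}). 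Once this factorization is in hand, the ``key algebraic identity'' you anticipate is trivial: the zeros of $p_n$ are exactly $t=2\cos(k\pi/(2n))$ with $k$ even, giving~\eqref{eq:localization_eigvals_weak_odd} immediately. So the computational heart is not verifying that a perturbation term vanishes but rather proving the factorization itself, which the paper does by Chebyshev identities (treating $n$ even and $n$ odd separately).

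Second, your Cauchy-interlacing detour is unnecessary and actually weaker than you think. The paper never invokes interlacing: with the factorization above, it suffices to evaluate $q_{\al,n}(2\cos(j\pi/(2n)))$ at consecutive mesh points (Lemma~\ref{lem:q_n_eval_jpi/n}), where the values are simply $(-1)^{j/2}(1-\al)$ for $j$ even and $(-1)^{(j-1)/2}\al\cot(j\pi/(2n))$ for $j$ odd. These alternate in sign across each interval $I_{n,j}$ with $j$ even, so the intermediate value theorem gives one root per interval and a count of roots finishes the argument. The sign analysis you flag as ``somewhat delicate'' is thus a one-line evaluation, not a case check; the only subtlety is the boundary point $t=0$ (i.e.\ $\la=4$), handled by Lemma~\ref{lem:limits_pq}. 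Your proposed use of $0<\al<1$ to ensure strictness is correct and is precisely what makes these endpoint values nonzero.
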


In particular, Theorem~\ref{thm:Localization_weak_eigenvals} implies that $\la_{\al,n,j}$ with odd $j$ does not depend on $\al$.

Motivated by Theorem~\ref{thm:Localization_weak_eigenvals}, we use $g$ as a change of variable in the characteristic equation and put 
\[
d_{n,j}\eqdef\frac{(j-1)\pi}{n},\qquad
\tht_{\al,n,j}\eqdef \widetilde{g}^{-1}(\la_{\al,n,j}),
\]
where $\widetilde{g}\colon[0,\pi]\to[0,4]$ is a restriction of $g$.
In other words, the numbers $\tht_{\al,n,j}$ belong to $[0,\pi]$ and satisfy $g(\tht_{\al,n,j})=\la_{\al,n,j}$.
Then~\eqref{eq:localization_eigvals_weak_odd} and~\eqref{eq:localization_eigvals_weak_even} are equivalent to
\begin{equation*}
    \begin{aligned}
    \,&\tht_{\al,n,j} =
    d_{n,j}\qquad
    & &
    (j\ \text{odd},\ 1\le j\le n),\\[1ex]
    d_{n,j}< \,&\tht_{\al,n,j}<d_{n,j+1}\qquad & &(j\ \text{even},\ 1\le j\le n).
    \end{aligned}
\end{equation*}

We define $\eta_\al\colon[0,\pi]\to\bR$ by
\begin{equation}\label{eq:eta}
 \eta_\al(x)\eqdef
 2\arctan\left(\ka_\al\cot\frac{x}{2}\right),
\end{equation}
where 
\begin{equation}\label{eq:ka}
\ka_\al\eqdef\frac{\al}{1-\al}.
\end{equation}
Obviously, $\eta_\al$ strictly decreases taking values from $\pi$ to $0$. 
Furthermore, $\eta_\al$ is strictly convex when $0<\al<1/2$ and strictly concave if $1/2<\al<1$.
Other equivalent formulas for $\eta_\al$ are given in~\eqref{eq:eta1}, \eqref{eq:eta2}, and \eqref{eq:eta3}.
A direct computation shows that $\eta_\al$ is an involution of the segment $[0,\pi]$, i.e.,
$\eta_\al(\eta_\al(x))=x$ for every $x$ in $[0,\pi]$.
This property is not used in the paper.
See~\cite{Z2014} for the general description of the continuous involutions of real intervals.

\begin{thm}[main equation]\label{thm:weak_characteristic_equation_L}
Let $n\ge3$ and $j$ be even, $1\le j\le n$. Then the number $\tht_{\al,n,j}$
is the unique solution of the following equation on $[0,\pi]$:
\begin{equation}\label{eq:main_eq}
x=d_{n,j}+\frac{\eta_\al(x)}{n}.
\end{equation}
\end{thm}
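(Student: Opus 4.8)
The plan is to derive the characteristic equation for $L_{\al,n}$ in explicit trigonometric form, then perform the change of variable $\la = g(\tht)$ to bring it to the stated fixed-point form. First I would write down $\det(\la I_n - L_{\al,n})$ using the formulas for tridiagonal symmetric Toeplitz matrices with corner perturbations (promised in Section~\ref{sec:trid_toep_corner_per}, equivalent to Yueh--Cheng~\cite{YuehCheng2008}). Setting $\la = 2-2\cos\tht = g(\tht)$ with $\tht \in (0,\pi)$, the relevant Chebyshev-type quantities $\sin(k\tht)$, $\cos(k\tht)$ appear, and the characteristic polynomial should factor into a product of two trigonometric expressions. One factor vanishes exactly at $\tht = d_{n,j}$ with $j$ odd (recovering~\eqref{eq:localization_eigvals_weak_odd} from Theorem~\ref{thm:Localization_weak_eigenvals}); the other factor, after collecting the $\al$-dependence via $\ka_\al = \al/(1-\al)$, should reduce to an equation equivalent to
\[
\cot\frac{n\tht}{2} \;=\; \text{(something involving }\ka_\al\text{ and }\cot(\tht/2)),
\]
more precisely to $\tan\!\bigl(\tfrac{n\tht - (j-1)\pi}{2}\bigr) = \ka_\al\cot(\tht/2)$ for suitable $j$. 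Applying $2\arctan$ to both sides and using the definition~\eqref{eq:eta} of $\eta_\al$ together with periodicity of $\tan$ gives $n\tht - (j-1)\pi = \eta_\al(\tht) + 2\pi m$ for some integer; choosing the branch so that $\tht \in (d_{n,j}, d_{n,j+1})$ pins down the integer and yields exactly~\eqref{eq:main_eq}.

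Next I would establish uniqueness of the solution on $[0,\pi]$. Rewrite~\eqref{eq:main_eq} as $h(x) \eqdef x - d_{n,j} - \eta_\al(x)/n = 0$. Since $\eta_\al$ is strictly decreasing (stated right after~\eqref{eq:ka}), $h$ is strictly increasing, hence has at most one zero; and since $\eta_\al$ takes values in $[0,\pi]$, one checks $h(d_{n,j}) = -\eta_\al(d_{n,j})/n \le 0$ and $h(d_{n,j+1}) = \pi/n - \eta_\al(d_{n,j+1})/n \ge 0$ (using $j+1 \le n+1$, so $d_{n,j+1}\le\pi$ where $\eta_\al$ may vanish, and $\eta_\al \le \pi$ everywhere), so a unique zero exists in $[d_{n,j}, d_{n,j+1}]\subseteq[0,\pi]$. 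To identify that zero with $\tht_{\al,n,j}$ rather than with some other $\tht_{\al,n,i}$, I would invoke Theorem~\ref{thm:Localization_weak_eigenvals}: the localization $d_{n,j} < \tht_{\al,n,j} < d_{n,j+1}$ for even $j$ places each $\tht_{\al,n,j}$ in its own subinterval, and the characteristic equation's solution set in $(0,\pi)$ is precisely $\{\tht_{\al,n,j}\}$, so matching the unique solution of~\eqref{eq:main_eq} in $[d_{n,j},d_{n,j+1}]$ to the unique eigenvalue-parameter in that subinterval is forced.

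The main obstacle I anticipate is the algebraic bookkeeping in the first step: getting the characteristic polynomial into a cleanly factored trigonometric form and then correctly tracking the integer branch when passing from the $\tan$-equation to~\eqref{eq:main_eq}. The corner perturbations $1+\al$ at positions $(1,1)$ and $(n,n)$ (rather than the $(1,n)$, $(n,1)$ perturbations of~\cite{GMS2021}) change the structure of the determinant, so the factorization and the precise appearance of $\ka_\al = \al/(1-\al)$ need care — in particular verifying that the $\al$-dependence enters only through this single ratio, consistent with the claim that the characteristic polynomial depends only on $\Re(\al)$. Once the factored form is in hand, the substitution $\la = g(\tht)$, the identity $\eta_\al(x) = 2\arctan(\ka_\al\cot(x/2))$, and the branch selection dictated by the localization theorem are routine; the monotonicity argument for uniqueness is immediate from the already-stated properties of $\eta_\al$.
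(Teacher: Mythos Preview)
Your proposal is correct and follows essentially the same route as the paper. The paper has already carried out the factorization you outline (Proposition~\ref{prop:pol_char_factorized} and formula~\eqref{eq:charpol_factorization_trig}), so its proof of Theorem~\ref{thm:weak_characteristic_equation_L} starts directly from $q_{\al,n}(2\cos(x/2))=0$, rewrites it as $\tan(nx/2)=-\frac{1-\al}{\al}\tan(x/2)$, applies $\arctan$ with the branch fixed by $x\in I_{n,j}$, and uses $\pi/2-\arctan(u)=\arctan(1/u)$ to land on~\eqref{eq:main_eq}; your form $\tan\bigl((n\tht-(j-1)\pi)/2\bigr)=\ka_\al\cot(\tht/2)$ is exactly the same equation with the branch already absorbed into the argument, and your explicit monotonicity argument for uniqueness on all of $[0,\pi]$ is a small addition the paper leaves implicit.
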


Figure~\ref{fig:theta_vs_f} shows the left-hand side and the right-hand side of~\eqref{eq:main_eq}.

\begin{figure}[htb]
\centering
\includegraphics{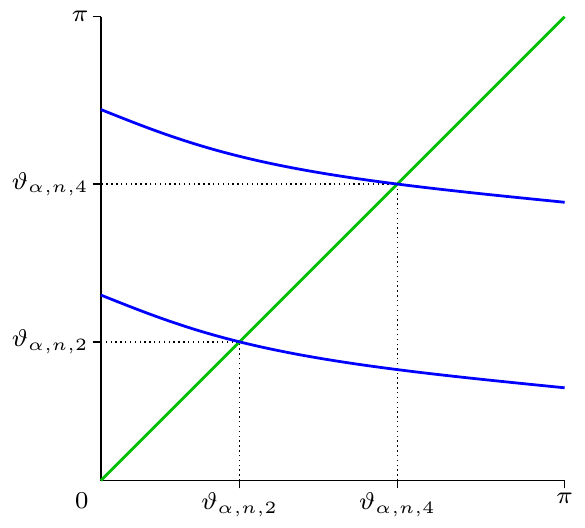}\quad\qquad \includegraphics{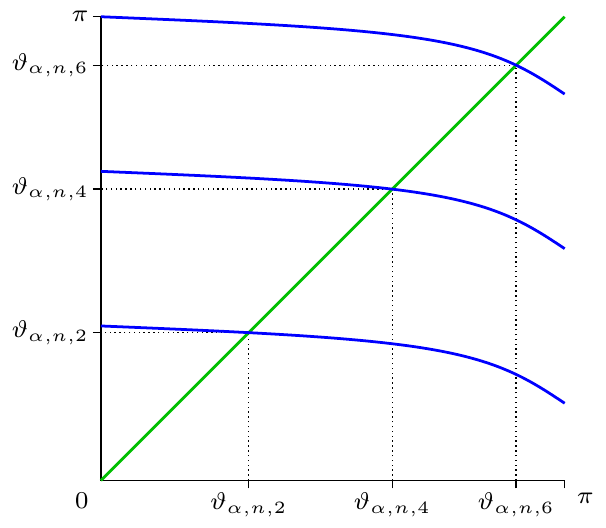}
\caption{The left picture shows the left-hand side (green) and the right-hand side (blue) of~\eqref{eq:main_eq} for $\al=1/3$, $n=5$, $j=2,4$. The right picture corresponds to $\al=4/5$,  $n=6$, $j=2,4,6$. \label{fig:theta_vs_f}}
\end{figure}

The main equation can be rewritten in the form $nx-(j-1)\pi = \eta_\al(x)$.
Figure~\ref{fig:nx_plus_jpi_eq_eta} shows both sides of this equation for some values of $\al$ and $n$, $j$.

For every $j$ with $1\le j\le n$, we define $I_{n,j} \eqdef \left(\frac{(j-1)\pi}{n}, \frac{j\pi}{n}\right)$.

\begin{figure}[hbt]
\centering
\includegraphics{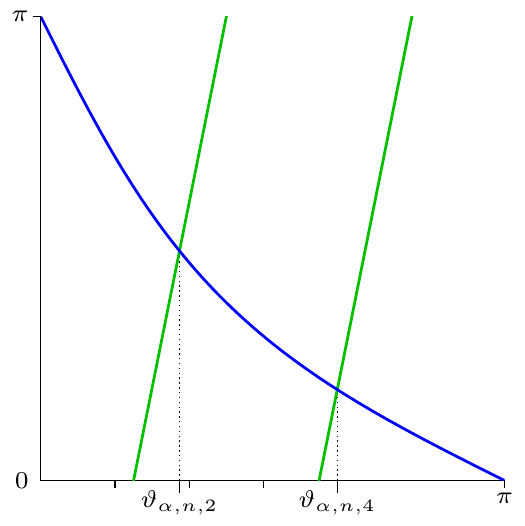}\quad\qquad\includegraphics{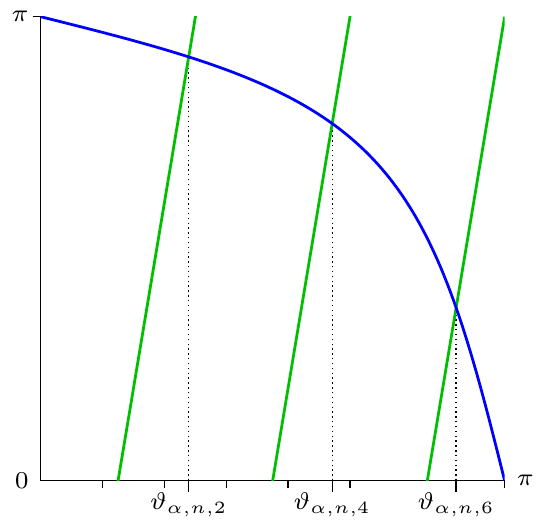}
\caption{Plots of $x\mapsto nx - (j-1)\pi$ (green) and $\eta_\al$ (blue), for $\al=1/3$, $n=5$ (left) and $\al=4/5$, $n=6$ (right).
\label{fig:nx_plus_jpi_eq_eta}
}
\end{figure}

For every $n\ge3$ and every $j$ even with $1\le j\le n$, we define $h_{\al,n,j}\colon \clos(I_{n,j})\to \bR$ by
\begin{equation}\label{eq:h_char}
    h_{\al,n,j}(x)\eqdef nx-(j-1)\pi-\eta_\al(x).
\end{equation}
In Proposition~\ref{prop:h_change_sign} we show that $h_{\al,n,j}$ changes its sign in $I_{n,j}$.
Hence, it is feasible to solve~\eqref{eq:main_eq} by the bisection method or false rule method.

In Proposition~\ref{prop:dependence_on_the_parameter} we study the dependence of $\la_{\al,n,j}$ on the parameter $\al$ (if $n$ and $j$ are fixed).

Proposition~\ref{prop:Z_contractive_weak} states that if $n$ is large enough, then the functions $x\mapsto d_{n,j}+\eta_\al(x)/n$ are contractive and the fixed-point method yields the solution of~\eqref{eq:main_eq}. 

Moreover, surprisingly for us, Newton's method applied to the equation $h_{\al,n,j}(x) = 0$ converges for \emph{all} $n\ge3$.

\begin{thm}[convergence of Newton's method]
\label{thm:Newton}
Let $n\ge3$, $j$ be even, $1\le j\le n$
and $y_{\al,n,j}^{(0)} \in\clos(I_{n,j})$.
Define the sequence $(y_{\al,n,j}^{(m)})_{m=0}^\infty$ by the recursive formula
\begin{equation}\label{eq:Newton_sequence_eigval} y_{\al,n,j}^{(m)}
\eqdef y_{\al,n,j}^{(m-1)} - \frac{h_{\al,n,j}\left(y_{\al,n,j}^{(m-1)}\right)}{h_{\al,n,j}'\left(y_{\al,n,j}^{(m-1)}\right)}\quad (m\ge1).
\end{equation}
Then $(y_{\al,n,j}^{(m)})_{m=0}^\infty$ converges to
$\tht_{\al,n,j}$. 
If $n>\sqrt{\pi\cK_2(\al)/2}$, then for every $m$
\begin{equation}
\label{eq:sol_newton_convergence_eigval}
    \left|y_{\al,n,j}^{(m)} - \tht_{\al,n,j}\right| \le \frac{\pi}{n}\left(\frac{\pi \cK_2(\al)}{2n^2}\right)^{2^m-1}.
\end{equation}
\end{thm}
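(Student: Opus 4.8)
The plan is to verify the two standard hypotheses that guarantee global convergence of Newton's method on an interval: that $h_{\al,n,j}$ has a simple root in $I_{n,j}$ with no critical points, and that $h_{\al,n,j}$ is convex (or concave) throughout $\clos(I_{n,j})$, so that the Newton iterates are monotone after the first step and trapped between the root and one endpoint. First I would record the derivatives: since $h_{\al,n,j}(x)=nx-(j-1)\pi-\eta_\al(x)$, we have $h_{\al,n,j}'(x)=n-\eta_\al'(x)$ and $h_{\al,n,j}''(x)=-\eta_\al''(x)$. Because $\eta_\al$ is strictly decreasing, $\eta_\al'<0$, so $h_{\al,n,j}'(x)>n\ge3>0$ on all of $\clos(I_{n,j})$; in particular $h_{\al,n,j}$ is strictly increasing, its root $\tht_{\al,n,j}$ (which exists and is unique by Theorem~\ref{thm:weak_characteristic_equation_L}, together with Proposition~\ref{prop:h_change_sign} for the change of sign) is simple, and the Newton map is well defined on $\clos(I_{n,j})$. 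Moreover, convexity of $\eta_\al$ for $0<\al<1/2$ makes $h_{\al,n,j}$ concave, and concavity of $\eta_\al$ for $1/2<\al<1$ makes it convex, so in either regime $h_{\al,n,j}$ has one definite sign of the second derivative on $\clos(I_{n,j})$; the borderline case $\al=1/2$ gives $\eta_{1/2}(x)=\pi-x$, hence $h$ affine and Newton exact in one step.

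Next I would run the classical convexity argument for Newton's method. Suppose, say, $h_{\al,n,j}$ is convex and increasing (the concave case is symmetric, replacing $h$ by $-h$ or reflecting the interval). For any starting point $y^{(0)}\in\clos(I_{n,j})$, convexity gives that the tangent line at $y^{(0)}$ lies below the graph, so its root $y^{(1)}$ satisfies $y^{(1)}\le\tht_{\al,n,j}$; a separate check (using that $h_{\al,n,j}'>0$ and evaluating the tangent at the left endpoint, together with the sign of $h$ there) shows $y^{(1)}$ also stays $\ge$ the left endpoint of $I_{n,j}$, so $y^{(1)}\in\clos(I_{n,j})$. From $m=1$ on, all iterates lie in $[\,(j-1)\pi/n,\ \tht_{\al,n,j}\,]$, where $h_{\al,n,j}\le0$; convexity then forces the sequence to be nondecreasing ($y^{(m+1)}=y^{(m)}-h/h'\ge y^{(m)}$ since $h\le0$ and $h'>0$), and bounded above by $\tht_{\al,n,j}$, hence convergent; the limit is a fixed point of the Newton map, i.e., a zero of $h_{\al,n,j}$, which by uniqueness is $\tht_{\al,n,j}$. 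This establishes convergence for \emph{every} $n\ge3$ with no smallness assumption.

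For the quantitative estimate \eqref{eq:sol_newton_convergence_eigval}, I would use the standard quadratic-convergence bound in the form $|y^{(m)}-\tht| \le \tfrac{M}{2L}\bigl(\tfrac{M}{2L}|y^{(0)}-\tht|\bigr)^{2^m-1}$, where $M=\max|h''|$ and $L=\min|h'|$ on $\clos(I_{n,j})$. Here $L=\min(n-\eta_\al') > n$, and $M=\max|\eta_\al''|$; I would call $\cK_2(\al)$ a bound for $\max_{[0,\pi]}|\eta_\al''|$ (this is presumably its definition in the paper — a bound on the second derivative of $\eta_\al$, obtained by differentiating \eqref{eq:eta} twice and estimating, the computation being elementary if slightly tedious). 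Then $\tfrac{M}{2L}\le \tfrac{\cK_2(\al)}{2n}$, and since the whole interval $\clos(I_{n,j})$ has length $\pi/n$ we have $|y^{(0)}-\tht|\le \pi/n$; plugging in gives $|y^{(m)}-\tht|\le \tfrac{\cK_2(\al)}{2n}\cdot\bigl(\tfrac{\pi\cK_2(\al)}{2n^2}\bigr)^{2^m-1}$, and absorbing the prefactor using $\tfrac{\cK_2(\al)}{2n}\le \tfrac{\pi}{n}$ for $n$ large (or more carefully, bounding $\tfrac{\cK_2(\al)}{2n}\le \tfrac{\pi}{n}\cdot\tfrac{\pi\cK_2(\al)}{2n^2}$ when $n^2\ge \pi\cK_2(\al)/2$, which is exactly the stated hypothesis $n>\sqrt{\pi\cK_2(\al)/2}$ — no, rather one keeps one factor and sharpens the exponent) yields \eqref{eq:sol_newton_convergence_eigval} with the factor $\pi/n$ out front; I would double-check the bookkeeping so that the power of $\pi\cK_2(\al)/(2n^2)$ comes out exactly $2^m-1$.

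The main obstacle I anticipate is not the convergence proof — that is the textbook convexity argument — but rather two bookkeeping points: (i) verifying that the very first Newton step from an arbitrary endpoint cannot overshoot \emph{out} of $\clos(I_{n,j})$ on the far side, which requires combining the sign of $h_{\al,n,j}$ at the endpoints (Proposition~\ref{prop:h_change_sign}) with the lower bound $h_{\al,n,j}'>n$ to control how far the tangent can reach; and (ii) pinning down the explicit constant $\cK_2(\al)$ as a clean bound for $|\eta_\al''|$ on $[0,\pi]$ — the function $\eta_\al$ has $\cot(x/2)$ inside an $\arctan$, so $\eta_\al''$ involves $\csc^2(x/2)$ terms that blow up at $x=0$, and one must check that these are tamed by the $\arctan$-derivative factor $1/(1+\ka_\al^2\cot^2(x/2))$, which vanishes at $x=0$, so that the product stays bounded; this cancellation is the delicate estimate, and getting the sharp constant $\cK_2(\al)$ (probably expressible in terms of $\ka_\al$ or $\al$) is where the real work lies.
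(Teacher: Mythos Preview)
Your overall strategy matches the paper's exactly: establish convergence for all $n\ge3$ via the convexity/concavity of $h_{\al,n,j}$ (the paper packages this as Propositions~\ref{prop:convergence_f_convex_y0>c}--\ref{prop:first_eval_convex_y0<c} and applies them in Proposition~\ref{prop:linear_conv_eigval}), and then obtain the quadratic estimate from the bounds $|h_{\al,n,j}'|>n$ and $|h_{\al,n,j}''|\le\cK_2(\al)$, the latter being precisely Proposition~\ref{prop:eta_bound}.

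Two slips to fix. First, your geometry in the convex case is reversed: for a convex increasing function the tangent lies \emph{below} the graph, so $0=h(\tht)\ge T(\tht)$ forces $y^{(1)}\ge\tht$, not $y^{(1)}\le\tht$. Thus in the convex case ($1/2<\al<1$) the iterates settle on the \emph{right} of the root and decrease; the overshoot to check when starting on the bad (left) side is that $y^{(1)}\le j\pi/n$, which follows from $h((j-1)\pi/n)=-\eta_\al((j-1)\pi/n)\ge-\pi$ and $h'\ge n$, so the Newton increment is at most $\pi/n$. (This is exactly the computation in the paper's proof of Proposition~\ref{prop:linear_conv_eigval}.)

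Second, your quoted quadratic bound has the wrong prefactor. From $|y^{(m+1)}-\tht|\le K|y^{(m)}-\tht|^2$ with $K=M/(2L)$ one gets by induction
\[
|y^{(m)}-\tht|\le |y^{(0)}-\tht|\,\bigl(K|y^{(0)}-\tht|\bigr)^{2^m-1},
\]
i.e.\ the prefactor is $|y^{(0)}-\tht|$ (or $b-a$), not $K$. With $|y^{(0)}-\tht|\le\pi/n$ and $K\le\cK_2(\al)/(2n)$ this gives \eqref{eq:sol_newton_convergence_eigval} immediately, with no ``absorbing'' needed; the hypothesis $n>\sqrt{\pi\cK_2(\al)/2}$ is exactly the condition $K\cdot(\pi/n)<1$ required for the bound to be useful. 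Your muddled discussion of the prefactor stems entirely from this misstatement.
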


We define $\Lambda_{\al,n}\colon [0,\pi]\to\bR$ by
\begin{equation}\label{eq:La}
    \Lambda_{\al,n}(x) \eqdef g(x)
+\frac{g'(x)
\eta_\al(x)}{n}
+\frac{g'(x)
\eta_\al(x)\eta_\al'(x)
+\frac{1}{2}g''(x)
\eta_\al(x)^2}{n^2}.
\end{equation}
For $j$ even, $1\le j\le n$,
we define $\laasympt_{\al,n,j}$ by
\begin{equation}\label{eq:laasympt_w}
\laasympt_{\al,n,j}
\eqdef \Lambda_{\al,n}\left(d_{n,j}\right).
\end{equation}

\begin{thm}[asymptotic expansion of the eigenvalues]
\label{thm:weak_asympt_weak}
There exists $C_1(\al)>0$
such that for $n$ large enough and $j$ even, $1\le j\le n$,
\begin{equation}\label{eq:weak_lambda_asympt_K}
\left|\la_{\al,n,j}-\laasympt_{\al,n,j}\right|
\le\frac{C_1(\al)}{n^3}.
\end{equation}
\end{thm}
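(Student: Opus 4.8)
The plan is to push the change of variable $\la_{\al,n,j}=g(\tht_{\al,n,j})$ through a double Taylor expansion: first expand the fixed-point equation of Theorem~\ref{thm:weak_characteristic_equation_L} so as to obtain $\tht_{\al,n,j}-d_{n,j}$ up to an $O(n^{-3})$ term, then substitute into the Taylor expansion of $g$ around $d_{n,j}$. Throughout, the only thing that needs care is that every remainder be controlled by quantities depending on $\al$ alone, never on $j$. Fix $n\ge3$ and an even $j$ with $1\le j\le n$, and write $t\eqdef d_{n,j}$, $\tht\eqdef\tht_{\al,n,j}$, $\delta\eqdef\tht-t$. By Theorem~\ref{thm:weak_characteristic_equation_L}, $n\delta=\eta_\al(\tht)=\eta_\al(t+\delta)$, and by Theorem~\ref{thm:Localization_weak_eigenvals} (in its $\tht_{\al,n,j}$ form) we have $t,\tht\in(0,\pi)$ and $0<\delta<\pi/n$. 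Since $\eta_\al$ maps $[0,\pi]$ into $[0,\pi]$, the bound $0<\delta<\pi/n$ is uniform in $j$, and this is the source of all later uniformity.

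Step 1: asymptotics of $\delta$. Let $M_k\eqdef\sup_{[0,\pi]}|\eta_\al^{(k)}|$. From $|\eta_\al(t+\delta)-\eta_\al(t)|\le M_1\delta\le M_1\pi/n$ we first get $\delta=\eta_\al(t)/n+O(n^{-2})$. Feeding this into the second-order Taylor formula $\eta_\al(t+\delta)=\eta_\al(t)+\eta_\al'(t)\delta+\tfrac12\eta_\al''(\xi)\delta^2$ (with $\xi$ between $t$ and $\tht$) and using $|\eta_\al'|\le M_1$, $|\eta_\al''|\le M_2$, $\delta\le\pi/n$, we obtain
\[
n\delta=\eta_\al(t)+\frac{\eta_\al'(t)\,\eta_\al(t)}{n}+O\!\left(\frac1{n^2}\right),
\qquad\text{hence}\qquad
\delta=\frac{\eta_\al(t)}{n}+\frac{\eta_\al'(t)\,\eta_\al(t)}{n^2}+O\!\left(\frac1{n^3}\right),
\]
with implied constant depending only on $\al$ through $M_1,M_2$ (and $\pi$).

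Step 2: expansion of the eigenvalue. Apply Taylor's theorem to $g$ (entire, with $\|g'\|_\infty,\|g''\|_\infty,\|g'''\|_\infty\le2$ on $[0,\pi]$) at $t$ to third order: $\la_{\al,n,j}=g(t+\delta)=g(t)+g'(t)\delta+\tfrac12 g''(t)\delta^2+\tfrac16 g'''(\xi')\delta^3$, where the cubic term is $O(n^{-3})$ because $\delta\le\pi/n$. Substituting the expansion of $\delta$ from Step~1 — both displayed terms of $\delta$ are needed in the coefficient of $g'(t)$, only the leading term $\eta_\al(t)/n$ in the coefficient of $\tfrac12 g''(t)$ — and collecting orders gives
\[
\la_{\al,n,j}=g(t)+\frac{g'(t)\,\eta_\al(t)}{n}+\frac{g'(t)\,\eta_\al(t)\,\eta_\al'(t)+\tfrac12 g''(t)\,\eta_\al(t)^2}{n^2}+O\!\left(\frac1{n^3}\right).
\]
By~\eqref{eq:La} and~\eqref{eq:laasympt_w} the first three terms are exactly $\Lambda_{\al,n}(t)=\Lambda_{\al,n}(d_{n,j})=\laasympt_{\al,n,j}$. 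Bundling the finitely many explicit remainders into one constant $C_1(\al)$ (depending only on $\al$) yields~\eqref{eq:weak_lambda_asympt_K}; in fact the bound holds for every $n\ge3$, which is stronger than the stated ``$n$ large enough''.

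The main obstacle is precisely the uniformity flagged above, and it reduces to checking that $M_1,M_2$ are finite, i.e.\ that $\eta_\al$ extends to a $C^2$ (indeed $C^\infty$) function on the closed interval $[0,\pi]$, even though the formula~\eqref{eq:eta} is apparently singular at $x=0$. Using $2\arctan s=\pi-2\arctan(1/s)$ for $s>0$, near $x=0$ one has $\eta_\al(x)=\pi-2\arctan(\ka_\al^{-1}\tan(x/2))$, manifestly smooth at $0$; at $x=\pi$ the formula~\eqref{eq:eta} is already smooth because $\cot(x/2)$ is smooth there. (Alternatively one may invoke the equivalent forms of $\eta_\al$ mentioned after~\eqref{eq:ka}.) Granting this, the rest is a bounded, routine Taylor estimate on the fixed compact interval $[0,\pi]$, the dependence on the proximity of $\al$ to $0$ or $1$ being carried entirely by $\ka_\al$, $M_1$, $M_2$.
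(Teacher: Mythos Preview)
Your proof is correct and follows essentially the same approach as the paper: the paper first establishes the two-term expansion of $\tht_{\al,n,j}$ about $d_{n,j}$ (Propositions~\ref{prop:theta_approximation_1} and~\ref{prop:weak_theta_asympt}, matching your Step~1) using the main equation and the uniform bounds on $\eta_\al',\eta_\al''$ from Proposition~\ref{prop:eta_bound} (your final paragraph), and then composes with the Taylor expansion of $g$ (your Step~2). Your observation that the bound in fact holds for every $n\ge3$ is also in line with the paper, since Proposition~\ref{prop:weak_theta_asympt} is stated for all $n\ge3$.
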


The asymptotic expansion~\eqref{eq:weak_lambda_asympt_K} can be written as $\la_{\al,n,j}=\Lambda_{\al,n}(d_{n,j})+O_\al(1/n^3)$, where the constant $C_1(\al)$ in the upper bound of $O_\al(1/n^3)$ depends on $\al$, but does not depend on $j$ or $n$.

Proposition~\ref{prop:weak_asympt_2}
gives an alternative asymptotic expansion for $\la_{\al,n,j}$, with the points $j\pi/(n+1)$ instead of $d_{n,j}$.

Proposition~\ref{prop:first_eigenvalues} contains an asymptotic expansion of $\la_{\al,n,j}$ for small values of $j$, as $j/n$ tends to $0$.
Notice that $\la_{\al,n,2}$ is the first non-zero eigenvalue of $L_{\al,n}$ and is known as the ``spectral gap'' of this matrix.

In the upcoming theorem we show an explicit formula~\eqref{eq:eivec_w} for the eigenvectors of $L_{\al,n}$ and asymptotic formulas for their norms; in these results we extend the domain of $\al$ to the strip $0<\Re(\al) <1$ of the complex plane, see~\eqref{eq:lapl_matrix_complex}.
In the complex case we define $\ka_\al$ as $\Re(\al)/(1-\Re(\al))$. 
Formula~\eqref{eq:eivec_w} is a particular case of~\cite[Theorem 3.1]{YuehCheng2008}.

For every $x$ in $[0,\pi]$, we define
\begin{equation}\label{eq:nu_al}
 \nu_{\al}(x) \eqdef \frac{1-\Re(\al)}{2}g(x) - \frac{\Re(\al)}{2} g(\eta_\al(x)) + \frac{\Re(\al)-|\al|^2}{2}g(x-\eta_\al(x)) + 2|\al|^2.
\end{equation}

\begin{thm}[eigenvectors and their norms]\label{thm:norm_eigvec}
Let $\al\in\bC$, $0<\Re(\al)<1$. Then the vector $[1,\ldots,1]^\top$ is an eigenvector of the matrix $L_{\al,n}$ associated to the eigenvalue $\la_{\al,n,1}=0$.
For every $j$, $2\le j\le n$, and every $k$, $1\le k\le n$, we define
\begin{equation}\label{eq:eivec_w}
    v_{\al,n,j,k}\eqdef \sin(k \tht_{\al,n,j}) -(1-\overline{\al}) \sin((k-1)\tht_{\al,n,j})  + \overline{\al} \sin((n-k)\tht_{\al,n,j}).
\end{equation}
Then the vector $v_{\al,n,j}=[v_{\al,n,j,k}]_{k=1}^n$ with components~\eqref{eq:eivec_w}
is an eigenvector of $L_{\al,n}$ associated to $\la_{\al,n,j}$.
Moreover, if $j$ is odd, then
\begin{equation}\label{eq:norm_eigvec_j_odd}
    \|v_{\al,n,j}\|_2 = |1-\al|\sqrt{\frac{n}{2} \la_{\al,n,j}}.
\end{equation}
If $j$ is even, then
\begin{equation}\label{eq:norm_eigvec_j_even}
    \|v_{\al,n,j}\|_2 = \sqrt{n\nu_\al\left(\tht_{\al,n,j}\right)} + O_\al\left(\frac{1}{\sqrt{n}}\right) ,
\end{equation}
with $O_\al\left(\frac{1}{\sqrt{n}}\right)$ uniformly on $j$.
\end{thm}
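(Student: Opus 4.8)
The plan is to verify~\eqref{eq:eivec_w} and then, writing $\tht=\tht_{\al,n,j}$, to put $v_{\al,n,j,k}$ into the form $A\sin(k\tht)+B\cos(k\tht)$ and evaluate $\sum_{k=1}^n\lvert v_{\al,n,j,k}\rvert^2$ directly. That $[1,\dots,1]^\top$ lies in the kernel of $L_{\al,n}$ is immediate from~\eqref{eq:lapl_matrix_complex}, since every row sums to zero. For $2\le j\le n$ one checks $L_{\al,n}v_{\al,n,j}=\la_{\al,n,j}v_{\al,n,j}$ entrywise: each of $\sin(k\tht)$, $\sin((k-1)\tht)$, $\sin((n-k)\tht)$ satisfies $u_{k-1}+u_{k+1}=2\cos(\tht)\,u_k$ and $g(\tht)=\la_{\al,n,j}$, so the interior rows $2\le k\le n-1$ hold automatically, while the first and last rows reduce, after simplification, to the characteristic equation, i.e.\ to Theorems~\ref{thm:Localization_weak_eigenvals} and~\ref{thm:weak_characteristic_equation_L}; alternatively this is the instance of~\cite[Theorem~3.1]{YuehCheng2008} recorded in Section~\ref{sec:trid_toep_corner_per}, which I would just cite.

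Expanding $\sin((k-1)\tht)$ and $\sin((n-k)\tht)$ by the subtraction formulas gives $v_{\al,n,j,k}=A\sin(k\tht)+B\cos(k\tht)$ with $A=1-(1-\overline{\al})\cos\tht-\overline{\al}\cos(n\tht)$ and $B=(1-\overline{\al})\sin\tht+\overline{\al}\sin(n\tht)$. Using $\sum_{k=1}^n\cos(2k\tht)=\frac{\cos((n+1)\tht)\sin(n\tht)}{\sin\tht}$, $\sum_{k=1}^n\sin(2k\tht)=\frac{\sin((n+1)\tht)\sin(n\tht)}{\sin\tht}$ and $\sum\sin^2=\frac n2-\frac12\sum\cos2k\tht$, $\sum\cos^2=\frac n2+\frac12\sum\cos2k\tht$, I obtain
\[
\|v_{\al,n,j}\|_2^2=\tfrac n2\bigl(|A|^2+|B|^2\bigr)+\tfrac12\bigl(|B|^2-|A|^2\bigr)\textstyle\sum_{k=1}^{n}\cos(2k\tht)+\Re(A\overline{B})\textstyle\sum_{k=1}^{n}\sin(2k\tht).
\]
If $j$ is odd, then $\tht=d_{n,j}$ (Theorem~\ref{thm:Localization_weak_eigenvals}), so $n\tht=(j-1)\pi$, hence $\sin(n\tht)=0$, $\cos(n\tht)=1$; both trailing sums vanish, $A=(1-\overline{\al})(1-\cos\tht)$, $B=(1-\overline{\al})\sin\tht$, and $\|v_{\al,n,j}\|_2^2=\frac n2|1-\al|^2\bigl((1-\cos\tht)^2+\sin^2\tht\bigr)=\frac n2|1-\al|^2 g(\tht)=\frac n2|1-\al|^2\la_{\al,n,j}$, which is~\eqref{eq:norm_eigvec_j_odd}.

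If $j$ is even, then $n\tht=(j-1)\pi+\eta_\al(\tht)$ with $(j-1)\pi$ an odd multiple of $\pi$ (Theorem~\ref{thm:weak_characteristic_equation_L}), so $\sin(n\tht)=-\sin\eta_\al(\tht)$, $\cos(n\tht)=-\cos\eta_\al(\tht)$, $\cos((n+1)\tht)=-\cos(\eta_\al(\tht)+\tht)$, $\sin((n+1)\tht)=-\sin(\eta_\al(\tht)+\tht)$. Substituting, the leading coefficient is, after a routine rearrangement using $\cos\beta=1-\tfrac12 g(\beta)$ (the constant $2|\al|^2$ and the three $g$-terms coming out of $|A|^2+|B|^2=g(\tht)+2\Re(\al)\bigl[(1-\cos\tht)(\cos\tht+\cos\eta_\al)-\sin\tht(\sin\tht+\sin\eta_\al)\bigr]+|\al|^2\bigl[(\cos\tht+\cos\eta_\al)^2+(\sin\tht+\sin\eta_\al)^2\bigr]$, where $\eta_\al=\eta_\al(\tht)$), exactly $\tfrac12(|A|^2+|B|^2)=\nu_\al(\tht)$ as defined in~\eqref{eq:nu_al}. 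The two trailing sums become $\frac{\sin\eta_\al(\tht)\cos(\eta_\al(\tht)+\tht)}{\sin\tht}$ and $\frac{\sin\eta_\al(\tht)\sin(\eta_\al(\tht)+\tht)}{\sin\tht}$, both bounded in modulus by $\frac{\sin\eta_\al(\tht)}{\sin\tht}=\frac{\ka_\al}{\sin^2(\tht/2)+\ka_\al^2\cos^2(\tht/2)}\le M_\al:=\max(\ka_\al,1/\ka_\al)$, where the equality is $\sin(2\arctan u)=2u/(1+u^2)$ with $u=\ka_\al\cot(\tht/2)$. Hence the remainder $R:=\|v_{\al,n,j}\|_2^2-n\nu_\al(\tht)$ satisfies $|R|\le M_\al(|A|^2+|B|^2)=2M_\al\nu_\al(\tht)$, so $\|v_{\al,n,j}\|_2^2=\nu_\al(\tht)\bigl(n+O_\al(1)\bigr)$ with the $O_\al(1)$ uniform in $j$; taking square roots, and using that $\nu_\al$ is bounded on $[0,\pi]$, gives $\|v_{\al,n,j}\|_2=\sqrt{n\nu_\al(\tht)}+O_\al(1/\sqrt n)$, i.e.\ \eqref{eq:norm_eigvec_j_even}. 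I expect the main obstacle to be the even-$j$ case: the algebraic identification of the leading coefficient with $\nu_\al(\tht)$, and the bound on the two trailing sums. Both rest on replacing the $n$-dependent quantities $\cos(n\tht)$, $\sin(n\tht)$ by $-\cos\eta_\al(\tht)$, $-\sin\eta_\al(\tht)$ through the main equation, which is precisely what forces $R$ to be proportional to $\nu_\al(\tht)$ rather than merely bounded; a bounded $R$ would not close the argument, because $\nu_\al(\tht_{\al,n,j})$ can be arbitrarily small (for fixed $j$ and $n\to\infty$ one has $\tht_{\al,n,j}\to0$ and $\nu_\al(\tht_{\al,n,j})\to\nu_\al(0)=0$).
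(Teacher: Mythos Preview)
Your proof is correct, and the overall architecture matches the paper's: derive the eigenvector formula from the general machinery of Section~\ref{sec:trid_toep_corner_per}, then compute $\|v_{\al,n,j}\|_2^2$ by summing trigonometric identities after using $n\tht=(j-1)\pi$ (odd~$j$) or $n\tht=(j-1)\pi+\eta_\al(\tht)$ (even~$j$) to eliminate the $n$-dependence of $\cos(n\tht),\sin(n\tht)$.

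The execution differs in a way that is worth noting. The paper writes $v_k$ as a sum of two products and, for even~$j$, arrives at the exact formula $\|v\|^2=n\,\nu_\al(\tht)+\dfrac{\sin\eta_\al(\tht)}{\sin\tht}\,\xi_\al(\tht)$, then bounds the second term by a constant $C_\al$ (Lemmas~\ref{lem:exact_norm_eigenvector} and~\ref{lem:second_term_norm_eigenvector}). You instead write $v_k=A\sin(k\tht)+B\cos(k\tht)$, which makes the identification $\tfrac12(|A|^2+|B|^2)=\nu_\al(\tht)$ transparent and, crucially, yields the \emph{multiplicative} remainder bound $|R|\le 2M_\al\,\nu_\al(\tht)$. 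Your closing remark is exactly right: a mere $O_\al(1)$ bound on $R$ does not deliver $\|v\|=\sqrt{n\nu_\al(\tht)}+O_\al(1/\sqrt n)$ uniformly in~$j$, because $\nu_\al(\tht_{\al,n,2})\to 0$ as $n\to\infty$ (indeed $\nu_\al(0)=0$ and, for real $\al$, $\nu_\al(\tht)\asymp\tht^4$ near $0$, so $n\nu_\al(\tht_{\al,n,2})\to 0$). Your factorization $\|v\|^2=\nu_\al(\tht)\bigl(n+O_\al(1)\bigr)$ sidesteps this entirely, since then $\|v\|-\sqrt{n\nu_\al(\tht)}=\sqrt{\nu_\al(\tht)}\bigl(\sqrt{n+O_\al(1)}-\sqrt n\bigr)=O_\al\bigl(\sqrt{\nu_\al(\tht)/n}\bigr)$, and boundedness of $\nu_\al$ on $[0,\pi]$ finishes. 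In short, your decomposition buys both a cleaner identification of $\nu_\al$ and the sharper remainder control that the uniform statement actually requires.
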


\section{Tridiagonal Toeplitz matrices with corner perturbations}\label{sec:trid_toep_corner_per}

Let $\de$, $\eps$, $\si$, $\tau$ be arbitrary complex parameters and $n\ge3$.
In this section, we consider the $n\times n$ matrix $A_n$, obtained from the tridiagonal Toeplitz matrix  with diagonals $-1$, $2$, $-1$, substituting the components $(1,1)$, $(1,n)$, $(n,1)$, and $(n,n)$ by  $2-\de$, $-\eps$, $-\si$, and $2-\tau$, respectively.
For example,
\begin{equation}\label{eq:Toeplitz_corner_pert}
A_6\eqdef\begin{bmatrix*}[r]
2-\de & -1 & 0 & 0 & 0 & -\eps\phantom{\de} \\
-1\phantom{\de} & 2 & -1 & 0 & 0 & 0\phantom{\de} \\
0\phantom{\de} & -1 & 2 & -1 & 0 & 0\phantom{\de} \\
0\phantom{\de} & 0 & -1 & 2 & -1 & 0\phantom{\de} \\
0\phantom{\de} & 0 & 0 & -1 & 2 & -1\phantom{\de} \\
-\si\phantom{\de} & 0 & 0 & 0 & -1 & 2-\tau
\end{bmatrix*}.
\end{equation}
The study of more general tridiagonal symmetric Toeplitz matrices (with diagonals $a_1$, $a_0$, $a_1$ instead of $-1$,$2$,$-1$) with corner perturbations can be easily reduced to this case.

We are going to give formulas for the characteristic polynomial and eigenvectors of $A_n$.
The results are not essentially new (see~\cite{Ferguson1980,FF2009,YuehCheng2008}), but we present them in a different form (with Chebyshev polynomials) and with other proofs.

We put $D_n(\la)\eqdef\det(\la I_n-A)$ and denote by $T_n$ and $U_n$ the Chebyshev polynomials of degree $n$ of the first and second kind, respectively.
The next proposition is a particular case of~\cite[Corollary 2.4]{FF2009}; it is also easy to prove directly expanding by cofactors.

\begin{prop}[the characteristic polynomial of $A_n$]
\label{prop:char_pol_trid_toep_pert}
\begin{equation}\label{eq:char_pol_trid_toep_pert}
\begin{aligned}
    D_n(\la) 
    & = U_n\left(\frac{\la-2}{2}\right)  + (\de+ \tau) U_{n-1}\left(\frac{\la-2}{2}\right) \\&\pheq
    +(\de\tau - \eps \si) U_{n-2} \left( \frac{\la-2}{2}\right) + (-1)^{n+1}(\eps + \si).
\end{aligned}
\end{equation}
\end{prop}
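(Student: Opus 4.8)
The plan is to prove the formula by cofactor expansion, reducing the determinant of the perturbed matrix $\la I_n - A_n$ to determinants of unperturbed tridiagonal Toeplitz matrices, whose values are Chebyshev polynomials. First I would recall the classical identity: if $J_m$ denotes the $m\times m$ tridiagonal Toeplitz matrix with diagonals $-1$, $2$, $-1$ (no corner perturbation), then $\det(\la I_m - J_m) = U_m\!\left(\frac{\la-2}{2}\right)$. This follows from the three-term recurrence $\det(\la I_m - J_m) = (\la-2)\det(\la I_{m-1} - J_{m-1}) - \det(\la I_{m-2} - J_{m-2})$ together with the substitution $x = (\la-2)/2$, which turns it into the defining recurrence $U_m(x) = 2x\,U_{m-1}(x) - U_{m-2}(x)$ of the Chebyshev polynomials of the second kind, with the correct initial values $U_0 = 1$, $U_1(x) = 2x$.

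Next I would expand $D_n(\la) = \det(\la I_n - A_n)$ along the first row. The first row of $\la I_n - A_n$ has entries $\la - 2 + \de$ in position $(1,1)$, the entry $1$ in position $(1,2)$, and the entry $\eps$ in position $(1,n)$. Expanding gives three terms: the $(1,1)$-cofactor is $(\la-2+\de)$ times the determinant of the $(n-1)\times(n-1)$ matrix obtained by deleting row $1$ and column $1$; the $(1,2)$-term contributes via a further expansion; and the $(1,n)$-term picks up the corner entry $\si$. The minor from deleting row $1$ and column $1$ is itself a tridiagonal matrix with a single corner perturbation $2-\tau$ in its bottom-right entry, whose determinant I would handle by one more first-row expansion, reducing everything to the unperturbed determinants $U_{n-1}$, $U_{n-2}$, $U_{n-3}$ with various coefficients. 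The $(1,n)$ minor, after deleting row $1$ and column $n$, is lower triangular except for the last row, and careful bookkeeping of the sign $(-1)^{1+n}$ and of the single off-diagonal chain yields the term $(-1)^{n+1}(\eps+\si)$ (the symmetry between $\eps$ and $\si$ emerging because the analogous expansion along the last row produces the $\si$-contribution). Collecting like powers of the Chebyshev polynomials and using the contiguous relation $U_{n-1}(x) = 2x\,U_{n-2}(x) - U_{n-3}(x)$ to eliminate $U_{n-3}$ in favor of $U_{n-1}$ and $U_{n-2}$, I would arrive at the stated expression~\eqref{eq:char_pol_trid_toep_pert}.

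The main obstacle is purely bookkeeping: tracking the signs and the interaction of the two corner perturbations $\eps$ and $\si$ when the matrix is simultaneously expanded from the top-left and bottom-right corners, so that the cross term $-\eps\si$ and the linear term $(-1)^{n+1}(\eps+\si)$ come out with the right signs. A clean way to organize this is to first treat the case $\eps = \si = 0$ (a tridiagonal matrix with only diagonal corner perturbations $\de$, $\tau$), obtaining $U_n + (\de+\tau)U_{n-1} + \de\tau\,U_{n-2}$ by two successive first-row and last-row expansions, and then to add back the corner entries, observing that the only minors in which the $(1,n)$ and $(n,1)$ positions survive are the ones producing the $-\eps\si\,U_{n-2}$ term and the two rank-one "long cycle" contributions giving $(-1)^{n+1}(\eps+\si)$. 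Since the statement explicitly offers the alternative of citing \cite[Corollary 2.4]{FF2009}, I would present the direct computation concisely and note the agreement with that reference.
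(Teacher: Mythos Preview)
Your proposal is correct and follows precisely the route the paper indicates: the paper does not give a detailed argument but simply states that the formula is a particular case of \cite[Corollary 2.4]{FF2009} and ``is also easy to prove directly expanding by cofactors.'' Your plan of reducing to the unperturbed determinants $\det(\la I_m-J_m)=U_m\bigl((\la-2)/2\bigr)$, handling first the case $\eps=\si=0$ and then adding the corner contributions, is exactly such a cofactor expansion, so there is nothing to add.
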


\begin{cor}
If $\eps = \de$ and $\si = \tau = -\de$, then $ D_n(\la) = U_n\left((\la-2)/2\right)$.
Therefore, the eigenvalues of $A_n$ are $g(j\pi/(n+1))$  with $j$ in $\{1,\ldots,n\}$. The same situation holds for $\si = \de$ and $\eps = \tau = -\de$.
\end{cor}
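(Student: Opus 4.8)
The plan is to specialize formula~\eqref{eq:char_pol_trid_toep_pert} of Proposition~\ref{prop:char_pol_trid_toep_pert} to each of the two parameter choices and to observe that every correction term vanishes. For the first choice, $\eps=\de$ and $\si=\tau=-\de$, I would compute the three coefficients appearing in~\eqref{eq:char_pol_trid_toep_pert}: $\de+\tau=\de+(-\de)=0$; next $\de\tau-\eps\si=\de(-\de)-\de(-\de)=-\de^2+\de^2=0$; and finally $\eps+\si=\de+(-\de)=0$, so the term $(-1)^{n+1}(\eps+\si)$ also disappears. Hence only $U_n((\la-2)/2)$ survives, which is the claim. For the second choice, $\si=\de$ and $\eps=\tau=-\de$, the same three quantities are $\de+\tau=\de+(-\de)=0$, $\de\tau-\eps\si=\de(-\de)-(-\de)\de=-\de^2+\de^2=0$, and $\eps+\si=(-\de)+\de=0$, giving again $D_n(\la)=U_n((\la-2)/2)$.

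For the assertion about the eigenvalues, I would point out that $U_n((\la-2)/2)$ is exactly what~\eqref{eq:char_pol_trid_toep_pert} returns in the unperturbed case $\de=\eps=\si=\tau=0$, that is, the characteristic polynomial $\det(\la I_n-T_n(g))$ of the tridiagonal symmetric Toeplitz matrix $T_n(g)$ with diagonals $-1,2,-1$. Since both $D_n$ and $\det(\la I_n-T_n(g))$ are monic of degree $n$, equality of the polynomials forces equality of the spectra with multiplicities. As recalled in the Introduction, the eigenvalues of $T_n(g)$ are $g(j\pi/(n+1))$ for $j\in\{1,\dots,n\}$; therefore the eigenvalues of $A_n$ coincide with them. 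If one prefers a self-contained argument, one may substitute $(\la-2)/2=-\cos\phi$ with $\phi\in[0,\pi]$, so that $\la=g(\phi)$, and use $U_n(\cos\phi)=\sin((n+1)\phi)/\sin\phi$ together with $U_n(-y)=(-1)^nU_n(y)$ to see that the zeros of $\la\mapsto U_n((\la-2)/2)$ are precisely those $\la$ with $\sin((n+1)\phi)=0$, i.e.\ $\phi=j\pi/(n+1)$, $j=1,\dots,n$.

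There is essentially no obstacle here: the corollary is an immediate specialization of Proposition~\ref{prop:char_pol_trid_toep_pert} combined with the classical description of the spectrum of $T_n(g)$. The only detail worth a sentence is the bookkeeping of the roots — a monic degree-$n$ polynomial is determined by its roots counted with multiplicity, and the $n$ numbers $g(j\pi/(n+1))$, $j=1,\dots,n$, are pairwise distinct because $g$ is strictly increasing on $[0,\pi]$, so no spurious or missing eigenvalue can occur.
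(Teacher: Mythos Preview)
Your proof is correct and is exactly the intended argument: the paper states this corollary without proof, since it is an immediate specialization of Proposition~\ref{prop:char_pol_trid_toep_pert} in which all three correction coefficients vanish, combined with the well-known spectrum of $T_n(g)$ recalled in the Introduction. Your added remarks on monicity and the distinctness of the roots are harmless extra care.
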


If $\la$ is an eigenvalue of $A_n$, we will search for an associated eigenvector $v = [v_k]_{k=1}^n$ as a linear combination of two geometric progressions:
\begin{equation}\label{eq:v}
    v_k = G_1 z^k + G_2 z^{-k} \qquad (1\le k\le n),
\end{equation}
where
$z$ is a solution of
the quadratic equation $z^2+(\la-2)z +1 =0$.
Equivalently, $\la$ and $z$ are related by
\begin{equation}\label{eq:la_z}
    -z^{-1} + (2-\la)
    - z = 0.
\end{equation}
Let $w\eqdef(\la I_n - A_n)v$.
Formulas~\eqref{eq:v} and~\eqref{eq:la_z} easily imply that $w_k = 0$ for $2\le k\le n-1$, and our goal is to find coefficients $G_1$ and $G_2$ such that $w_1=0$ and $w_n=0$.

To take advantage of the symmetry between $z$ and $z^{-1}$, we rewrite~\eqref{eq:v} in terms of Chebyshev polynomials:
\begin{equation*}
\begin{aligned}
    v_k &= \left(\frac{G_1+G_2}{2}\right) (z^k + z^{-k}) + \left(\frac{G_1-G_2}{2}\right) (z^k - z^{-k})\\
    &= (G_1+G_2) T_k\left(\frac{z+z^{-1}}{2}\right) + \frac{(G_1-G_2)(z-z^{-1})}{2} U_{k-1}\left(\frac{z+z^{-1}}{2}\right).
\end{aligned}
\end{equation*}
The system $w_1=0$ and $w_n=0$ is equivalent to
\begin{equation}\label{eq:system_x_y}
\begin{aligned}
    \rmA_{\de,\eps,n} x + \rmB_{\de,\eps,n} y & = 0\\
    \rmC_{\si,\tau,n} x + \rmD_{\si,\tau,n} y & = 0,
\end{aligned}
\end{equation}
where $x \eqdef (G_1+G_2)/2$, $y\eqdef (G_1-G_2)/2$, and
\begin{equation}\label{eq:ABCD}
\begin{aligned}
    \rmA_{\de,\eps,n} & \eqdef 2\left(-1 + \de T_1\left(\frac{z+z^{-1}}{2}\right)  + \eps T_n\left(\frac{z+z^{-1}}{2}\right) \right), 
    \\
    \rmB_{\de,\eps,n} & \eqdef (z-z^{-1}) \left( \de + \eps U_{n-1}\left(\frac{z+z^{-1}}{2}\right) \right),
    \\
    \rmC_{\si,\tau,n} & \eqdef 2\left(\si T_1\left(\frac{z+z^{-1}}{2}\right)+ \tau T_n\left(\frac{z+z^{-1}}{2}\right) -  T_{n+1}\left(\frac{z+z^{-1}}{2}\right) \right), \\
    \rmD_{\si,\tau,n} & \eqdef (z- z^{-1})\left( \si +  \tau U_{n-1}\left(\frac{z+z^{-1}}{2}\right) - U_n\left(\frac{z+z^{-1}}{2}\right) \right).
\end{aligned}      
\end{equation}

In the next proposition we use the convention that $U_{-1}(t) \eqdef 0$.

\begin{prop}[eigenvectors of $A_n$]\label{prop:eigvec_tri_Toep_corner_per}
Let $\la\in\bC\setminus\{0,4\}$ be an eigenvalue of $A_n$.
If $\rmA_{\de,\eps,n}\neq0$ or $\rmB_{\de,\eps,n}\neq0$, then the vector $v = [v_k]_{k=1}^n$ with components
\begin{equation}\label{eq:eigvec_AB_neq_0}
v_k \eqdef (-1)^{k-1}\left(U_{k-1}\left(\frac{\la-2}{2}\right) + \de U_{k-2}\left(\frac{\la-2}{2}\right) + (-1)^n \eps U_{n-k-1}\left(\frac{\la-2}{2}\right)\right)
 \end{equation}
is an eigenvector of $A_n$ associated to $\la$. If $\rmC_{\si,\tau,n}\neq0$ or $\rmD_{\si,\tau,n}\neq0$, then the vector $v 
= [v_k]_{k=1}^n$ with components
\begin{equation}\label{eq:eigvec_CD_neq_0}
v_k \eqdef (-1)^{k-1}\left(\si U_{k-2}\left(\frac{\la-2}{2}\right) + (-1)^n \tau U_{n-k-1}\left(\frac{\la-2}{2}\right) + (-1)^n U_{n-k}\left(\frac{\la-2}{2}\right)\right)
\end{equation}
is an eigenvector of $A_n$ associated to $\la$.
\end{prop}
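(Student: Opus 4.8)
The plan is to verify directly that the vector $v$ with components~\eqref{eq:eigvec_AB_neq_0} is a nonzero vector annihilated by $\la I_n-A_n$, and then to handle~\eqref{eq:eigvec_CD_neq_0} in exactly the same way, with the roles of the first and last rows interchanged. Only Proposition~\ref{prop:char_pol_trid_toep_pert} will be used; I would not appeal to the determinant of the system~\eqref{eq:system_x_y}.

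The first step is to pass from Chebyshev polynomials to exponentials. Since $\la\notin\{0,4\}$, a root $z$ of $z^2+(\la-2)z+1=0$ satisfies $z\neq\pm1$; write $z+z^{-1}=2-\la$ and put $q\eqdef z-z^{-1}$, so that $q\neq0$ and $z^2\neq1$. For $m\in\bZ$ set $p_m\eqdef z^m-z^{-m}$, so $p_0=0$, $p_{-m}=-p_m$, and $p_{m-1}+(\la-2)p_m+p_{m+1}=0$ for every integer $m$; working with this two-sided sequence (rather than with the convention $U_{-1}\eqdef0$) makes all boundary indices uniform. From the identities $U_m\bigl(\tfrac{z+z^{-1}}{2}\bigr)=\tfrac{z^{m+1}-z^{-m-1}}{z-z^{-1}}$, $U_m(-t)=(-1)^mU_m(t)$, and $\tfrac{z+z^{-1}}{2}=-\tfrac{\la-2}{2}$, one obtains $U_m\bigl(\tfrac{\la-2}{2}\bigr)=(-1)^m p_{m+1}/q$. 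Substituting this into~\eqref{eq:eigvec_AB_neq_0} and cancelling signs gives the compact form
\[
q\,v_k=p_k-\de\,p_{k-1}+\eps\,p_{n-k}\qquad(1\le k\le n),
\]
and likewise~\eqref{eq:eigvec_CD_neq_0} becomes $q\,v_k=-\si\,p_{k-1}+\tau\,p_{n-k}-p_{n-k+1}$.

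Next I would check $(\la I_n-A_n)v=0$ row by row, using only the three-term recurrence for $(p_m)$. For $v$ as in~\eqref{eq:eigvec_AB_neq_0}: the $k$-th entry of $(\la I_n-A_n)v$ with $2\le k\le n-1$, multiplied by $q$, equals $[p_{k-1}+(\la-2)p_k+p_{k+1}]-\de[p_{k-2}+(\la-2)p_{k-1}+p_k]+\eps[p_{n-k-1}+(\la-2)p_{n-k}+p_{n-k+1}]$, and each bracket vanishes. For $k=1$, expanding $q$ times the first entry, namely $(\la-2+\de)(p_1+\eps p_{n-1})+(p_2-\de p_1+\eps p_{n-2})+\eps(p_n-\de p_{n-1})$, the $\de$- and $\de\eps$-terms cancel, the $\eps$-bracket collapses by the recurrence, and one is left with $(\la-2)p_1+p_2=-p_0=0$. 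For $k=n$, expanding $\si(p_1+\eps p_{n-1})+(p_{n-1}-\de p_{n-2}+\eps p_1)+(\la-2+\tau)(p_n-\de p_{n-1})$ and applying the recurrence twice reduces it to $-\bigl(p_{n+1}-(\de+\tau)p_n+(\de\tau-\eps\si)p_{n-1}-(\eps+\si)p_1\bigr)$; but substituting $U_m\bigl(\tfrac{\la-2}{2}\bigr)=(-1)^m p_{m+1}/q$ and $p_1=q$ into Proposition~\ref{prop:char_pol_trid_toep_pert} shows $q\,D_n(\la)=(-1)^n\bigl(p_{n+1}-(\de+\tau)p_n+(\de\tau-\eps\si)p_{n-1}-(\eps+\si)p_1\bigr)$, so the $n$-th entry equals $(-1)^{n+1}D_n(\la)$, which is $0$ because $\la$ is an eigenvalue. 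Thus $A_nv=\la v$. (For~\eqref{eq:eigvec_CD_neq_0} the same computation shows the interior and last entries vanish identically, while the first entry equals $(-1)^nD_n(\la)=0$.)

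Finally, $v\neq0$, and this is where the hypothesis enters. Rewriting the compact form as $q\,v_k=G_1 z^k+G_2 z^{-k}$ with $G_1=1-\de z^{-1}-\eps z^{-n}$ and $G_2=\de z+\eps z^n-1$, the equations $qv_1=qv_2=0$ force $G_1(z^2-1)=0$, hence $G_1=G_2=0$; then $\de z^{-1}+\eps z^{-n}=1=\de z+\eps z^n$, and comparing with~\eqref{eq:ABCD} gives $\rmA_{\de,\eps,n}=\de(z+z^{-1})+\eps(z^n+z^{-n})-2=0$ and $\rmB_{\de,\eps,n}=\de(z-z^{-1})+\eps(z^n-z^{-n})=0$, contradicting $\rmA_{\de,\eps,n}\neq0$ or $\rmB_{\de,\eps,n}\neq0$; the argument for~\eqref{eq:eigvec_CD_neq_0} is identical with $\rmC_{\si,\tau,n},\rmD_{\si,\tau,n}$ in place of $\rmA_{\de,\eps,n},\rmB_{\de,\eps,n}$. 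The only delicate point in the whole proof is the last-row (resp.\ first-row) computation: one must collapse the expanded sum to the single combination $p_{n+1}-(\de+\tau)p_n+(\de\tau-\eps\si)p_{n-1}-(\eps+\si)p_1$ and identify it, through Proposition~\ref{prop:char_pol_trid_toep_pert} and the dictionary $U_m\leftrightarrow(-1)^m p_{m+1}/q$, as $(-1)^nq\,D_n(\la)$. Passing to the two-sided sequence $(p_m)_{m\in\bZ}$ at the outset is what keeps this sign bookkeeping routine.
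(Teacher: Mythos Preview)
Your proof is correct, and it takes a genuinely different route from the paper's. The paper works ``backwards'': it sets up the $2\times2$ homogeneous system~\eqref{eq:system_x_y}, verifies the determinant identity $\rmA_{\de,\eps,n}\rmD_{\si,\tau,n}-\rmB_{\de,\eps,n}\rmC_{\si,\tau,n}=2(-1)^n(z-z^{-1})D_n(\la)$, and then, knowing the system is singular when $\la$ is an eigenvalue, picks the particular solution $x=\rmB_{\de,\eps,n}/(2(z-z^{-1}))$, $y=-\rmA_{\de,\eps,n}/(2(z-z^{-1}))$, computes $G_1,G_2$, and simplifies to~\eqref{eq:eigvec_AB_neq_0}. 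Nonvanishing then follows because $(x,y)\neq(0,0)$ forces $(G_1,G_2)\neq(0,0)$, and the geometric progressions $[z^k]$, $[z^{-k}]$ are linearly independent. You instead take the formula~\eqref{eq:eigvec_AB_neq_0} as given, translate it to the compact form $q\,v_k=p_k-\de\,p_{k-1}+\eps\,p_{n-k}$, and check $(\la I_n-A_n)v=0$ row by row, with the only nontrivial step being the identification of the last entry with $(-1)^{n+1}D_n(\la)$ via Proposition~\ref{prop:char_pol_trid_toep_pert}. Your nonvanishing argument is the contrapositive of the paper's: $v_1=v_2=0\Rightarrow G_1=G_2=0\Rightarrow\rmA_{\de,\eps,n}=\rmB_{\de,\eps,n}=0$. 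What the paper's approach buys is an explanation of where the eigenvector formula comes from; what your approach buys is that it bypasses the determinant identity for the system~\eqref{eq:system_x_y} entirely and relies only on the already-established Proposition~\ref{prop:char_pol_trid_toep_pert}, making the argument slightly more self-contained (and, incidentally, recovering the observation $w_n=(-1)^{n+1}D_n(\la)$ that the paper records in a later remark).
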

\begin{proof}
The assumptions $\la\notin\{0,4\}$ and $z+z^{-1} = 2-\la$ imply that $z\notin\{-1,1\}$ and
\begin{equation}\label{eq:Cheby_pol_z}
    T_n\left(\frac{\la-2}{2}\right) = (-1)^n \frac{z^n+z^{-n}}{2}, \qquad U_n\left(\frac{\la-2}{2}\right) = (-1)^{n}\frac{z^{n+1} - z^{-(n+1)}}{z-z^{-1}}.
\end{equation}
A direct computation shows that
\begin{equation}
    \rmA_{\de,\eps,n} \rmD_{\si,\tau,n} - \rmB_{\de,\eps,n} \rmC_{\si,\tau,n} = 2 (-1)^n (z-z^{-1}) D_n(\la).
\end{equation}
Since $\la$ is an eigenvalue of $A_n$, we get $D_n(\la) = 0$, and the linear homogeneous system~\eqref{eq:system_x_y} has non-trivial solutions $(x,y)$.
Namely, if $\rmA_{\de,\eps}\neq0$ or $\rmB_{\de,\eps}\neq0$, we put 
\[ x = \frac{\rmB_{\de,\eps}}{2(z-z^{-1})}, \qquad y = -\frac{\rmA_{\de,\eps}}{2(z-z^{-1})} .
\]
Using~\eqref{eq:Cheby_pol_z} we simplify $G_1$ and $G_2$ to
 \begin{align*}
     G_1  &=x+y = \frac{\rmB_{\de,\eps,n}-\rmA_{\de,\eps,n}}{2(z-z^{-1})} = \frac{1 - \de z^{-1} - \eps z^{-n}}{z-z^{-1}},\\ 
     G_2  &=x-y = \frac{\rmB_{\de,\eps,n}+\rmA_{\de,\eps,n}}{2} = \frac{-1 +  \de z + \eps z^n}{z-z^{-1}}.
 \end{align*}
Hence, for every $k$, formula~\eqref{eq:v} converts in
 \begin{equation}\label{eq:v_comp_temp}
    v_k  = \frac{z^k - z^{-k}}{z-z^{-1}} - \de\, \frac{z^{k-1} - z^{-(k-1)}}{z-z^{-1}} + \eps\, \frac{z^{n-k} - z^{-(n-k)}}{z-z^{-1}},
\end{equation}
which by~\eqref{eq:Cheby_pol_z} simplifies to~\eqref{eq:eigvec_AB_neq_0}. The linear independence of the geometric progressions $[z^{k}]_{k=1}^n$ and $[z^{-k}]_{k=1}^{n}$ assures that $v$ is a non-zero vector. 
The proof of~\eqref{eq:eigvec_CD_neq_0} is similar.
\end{proof}

Proposition~\ref{prop:eigvec_tri_Toep_corner_per} does not cover the situation when
\begin{equation}\label{eq:abcd=0}
\rmA_{\de,\eps,n} = \rmB_{\de,\eps,n} = \rmC_{\si,\tau,n} = \rmD_{\si,\tau,n}=0.
\end{equation}
We analyze this situation in the following remarks.

\begin{remark}
If $\la=0$, i.e., $z=1$, then~\eqref{eq:abcd=0}
is equivalent to $\de+\eps = 1$ and $\si+\tau = 1$. The last two equalities imply that $A_n$ is a laplacian complex matrix and $v=[1]_{k=1}^n$ is an eigenvector associated to $\la$.
\end{remark}

\begin{remark}
If $\la = 4$, i.e., $z=-1$, then~\eqref{eq:abcd=0} is equivalent to $\de + (-1)^n\eps = 1$ and $(-1)^n\si - \tau = 1$. If these conditions are fulfilled, $v=[(-1)^k]_{k=1}^n$ is an eigenvector associated to $\la$.
\end{remark}

\begin{remark}
If $\la\notin\{0,4\}$, then~\eqref{eq:abcd=0} is equivalent to
\[ \de = \tau = \frac{U_{n-1}\left(\frac{z+z^{-1}}{2}\right)}{U_{n-2}\left(\frac{z+z^{-1}}{2}\right)}, \qquad \eps = \si = -\frac{1}{U_{n-2}\left(\frac{z+z^{-1}}{2}\right)}. \]
In this case, every vector with components of the form~\eqref{eq:v} belongs to $\ker(\la I_n - A_n)$, and $\la$ is an eigenvalue of multiplicity at least $2$.
\end{remark}

\begin{remark}
We have tested most formulas of this section in Sagemath using symbolic computations with polynomials over the variables $\de,\eps,\si,\tau,\la$,
for every $n$ with $3\le n\le 20$.
In particular, we have verified
that if $v$ is given by~\eqref{eq:eigvec_AB_neq_0} and $w = (\la I_n -A_n)v$, then $w_n = (-1)^{n+1} D_n(\la)$.
Analogously, if $v$ is given by~\eqref{eq:eigvec_CD_neq_0}, then $ w_1 = (-1)^{n} D_n(\la)$.
\end{remark}

\section{Eigenvalues' localization}
\label{sec:localization_eigenvalues}
 
In the incoming proposition, unlike the main part of the paper, we suppose that $\al$ is a complex parameter.
We define $D_{\al,n}(\la)$ as the characteristic polynomial
$\det (\la I_n - L_{\al,n})$,
where $L_{\al,n}$ is the $n\times n$ complex laplacian matrix of the form~\eqref{eq:lapl_matrix_complex}.

\begin{prop}[characteristic polynomial of complex laplacian matrices]
\label{prop:L_char_pol_via_Cheb}
For $n\ge 3$,
\begin{equation}\label{eq:L_char_pol_via_Cheb}
D_{\al,n}(\la)
= (\la-2\Re(\al)) U_{n-1}\left(\frac{\la-2}{2}\right)
-2\Re(\al) U_{n-2}\left(\frac{\la-2}{2}\right)+2(-1)^{n+1}\Re(\al).
\end{equation}
\end{prop}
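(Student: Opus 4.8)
The plan is to recognize $L_{\al,n}$ as a particular instance of the matrix $A_n$ studied in Section~\ref{sec:trid_toep_corner_per} and then invoke Proposition~\ref{prop:char_pol_trid_toep_pert}. Comparing the complex laplacian matrix~\eqref{eq:lapl_matrix_complex} with the template~\eqref{eq:Toeplitz_corner_pert}, one reads off the corner parameters: the entry $(1,1)$ equal to $1+\overline{\al}$ forces $\de=1-\overline{\al}$, the entry $(n,n)$ equal to $1+\al$ forces $\tau=1-\al$, the entry $(1,n)$ equal to $-\overline{\al}$ forces $\eps=\overline{\al}$, and the entry $(n,1)$ equal to $-\al$ forces $\si=\al$. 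Thus $L_{\al,n}=A_n$ for this choice of $\de,\eps,\si,\tau$, and $D_{\al,n}(\la)=D_n(\la)$.

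Next I would substitute these values into~\eqref{eq:char_pol_trid_toep_pert}. The coefficient combinations that appear there simplify because the imaginary parts cancel: $\de+\tau=(1-\overline{\al})+(1-\al)=2-2\Re(\al)$; $\de\tau-\eps\si=(1-\overline{\al})(1-\al)-\overline{\al}\al=1-2\Re(\al)+|\al|^2-|\al|^2=1-2\Re(\al)$; and $\eps+\si=\al+\overline{\al}=2\Re(\al)$. Hence
\begin{equation*}
D_{\al,n}(\la)
=U_n\left(\frac{\la-2}{2}\right)
+(2-2\Re(\al))U_{n-1}\left(\frac{\la-2}{2}\right)
+(1-2\Re(\al))U_{n-2}\left(\frac{\la-2}{2}\right)
+2(-1)^{n+1}\Re(\al).
\end{equation*}

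Finally I would eliminate $U_n$ using the Chebyshev three-term recurrence $U_n(t)=2t\,U_{n-1}(t)-U_{n-2}(t)$ with $t=(\la-2)/2$, that is, $U_n\left(\frac{\la-2}{2}\right)=(\la-2)U_{n-1}\left(\frac{\la-2}{2}\right)-U_{n-2}\left(\frac{\la-2}{2}\right)$. Collecting terms, the coefficient of $U_{n-1}\left(\frac{\la-2}{2}\right)$ becomes $(\la-2)+(2-2\Re(\al))=\la-2\Re(\al)$, the coefficient of $U_{n-2}\left(\frac{\la-2}{2}\right)$ becomes $-1+(1-2\Re(\al))=-2\Re(\al)$, and the free term $2(-1)^{n+1}\Re(\al)$ is unchanged. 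This is exactly~\eqref{eq:L_char_pol_via_Cheb}.

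There is no serious obstacle here beyond careful bookkeeping; the only point to watch is the asymmetric placement of the parameters in Proposition~\ref{prop:char_pol_trid_toep_pert} ($\eps$ versus $\si$, $\de$ versus $\tau$, together with the factor $(-1)^{n+1}$), although this asymmetry evaporates once everything collapses to $\Re(\al)$. Alternatively, one could prove the formula directly by expanding $\det(\la I_n-L_{\al,n})$ along the first row and the last column, but routing through Proposition~\ref{prop:char_pol_trid_toep_pert} is cleaner. As an immediate consequence, $D_{\al,n}(\la)$ depends on $\al$ only through $\Re(\al)$, so $\det(\la I_n-L_{\al,n})=\det(\la I_n-L_{\Re(\al),n})$.
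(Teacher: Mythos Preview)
Your proof is correct and follows exactly the approach indicated in the paper, which states only that the result is a corollary of Proposition~\ref{prop:char_pol_trid_toep_pert}. You have simply supplied the substitution $\de=1-\overline{\al}$, $\tau=1-\al$, $\eps=\overline{\al}$, $\si=\al$ and the Chebyshev recurrence step that the paper leaves to the reader.
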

\begin{proof}
This is a corollary of Proposition~\ref{prop:char_pol_trid_toep_pert}.
\end{proof}

Formula~\eqref{eq:L_char_pol_via_Cheb} implies a little miracle: $ D_{\al,n} = D_{\Re(\al),n}$ for every complex $\al$.
Therefore, the eigenvalues of $L_{\al,n}$ are the same as the ones of the matrix $L_{\Re(\al),n}$. Since the latter matrix is hermitian, the eigenvalues are real.
Hence, from now on we will suppose $\al$ to be a real number.

It turns out that $D_{\al,n}(\la)$ factorizes into a product of two polynomials of nearly the same degree.
To join the cases when $n$ is even and $n$ is odd, we use the change of variables $\la =4-t^2$.

\begin{prop}\label{prop:pol_char_factorized}
For $n\ge 3$,
\begin{equation}\label{eq:char_pol_fact}
D_{\al,n}(4-t^2) = 2(-1)^n \frac{p_{n}(t) q_{\al,n}(t)}{t},
\end{equation}
where
\[ 
p_{n}(t) = (t^2-4)
U_{n-1}\left(\frac{t}{2}\right), \qquad q_{\al,n}(t) =  (1-\al)T_n\left(\frac{t}{2}\right)
+\al \frac{t}{2} U_{n-1}\left(\frac{t}{2}\right).
\]
\end{prop}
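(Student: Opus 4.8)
The plan is to start from the expression for $D_{\al,n}(\la)$ given in Proposition~\ref{prop:L_char_pol_via_Cheb}, substitute $\la = 4-t^2$, and manipulate the Chebyshev terms using standard identities until the right-hand side visibly factors as $2(-1)^n p_n(t) q_{\al,n}(t)/t$. First I would record the elementary conversions: with $\la-2 = 2-t^2$, the argument of the Chebyshev polynomials becomes $(\la-2)/2 = 1 - t^2/2$. It is convenient to also write $t = 2\cos\phi$ so that $1-t^2/2 = \cos(2\phi)$, whence $U_{n-1}(\cos 2\phi) = \sin(2n\phi)/\sin(2\phi)$, $U_{n-2}(\cos 2\phi) = \sin(2(n-1)\phi)/\sin(2\phi)$, and similarly $T_n(\cos 2\phi) = \cos(2n\phi)$. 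Expressing everything in terms of $\phi$ turns the identity into a trigonometric one that can be checked by product-to-sum formulas; alternatively one stays with polynomials and uses the contiguous relations $U_n(x) = 2xU_{n-1}(x) - U_{n-2}(x)$ and $U_n(x) - U_{n-2}(x) = 2T_n(x)$.

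The key algebraic step is to rewrite the combination $(\la - 2\al)U_{n-1} - 2\al U_{n-2} + 2(-1)^{n+1}\al$ appearing in~\eqref{eq:L_char_pol_via_Cheb} by splitting off the $\al$-dependent part. Since $\la - 2\al = (4-t^2) - 2\al = (2-t^2) + (2 - 2\al)$, and $2-t^2 = 2\cdot\frac{\la-2}{2}$... more cleanly: I would group the terms so that the $\al$-independent part is $(4-t^2)U_{n-1}(t/2) $ plus lower-order $U$'s, and the $\al$-dependent part is $-2\al\bigl(U_{n-1}(t/2) + U_{n-2}(t/2) + (-1)^n\bigr)$. For the $\al$-independent part, note $(4-t^2)U_{n-1} = 4U_{n-1} - t^2 U_{n-1}$; I would aim to show this equals $(t^2-4)U_{n-1}(t/2)\cdot(\text{something}) $ — actually the cleanest route is to verify directly that
\[
D_{\al,n}(4-t^2) = 2(-1)^n\,\frac{p_n(t)q_{\al,n}(t)}{t}
\]
by expanding the right-hand side: $p_n(t)q_{\al,n}(t)/t = (t^2-4)U_{n-1}(t/2)\bigl[(1-\al)T_n(t/2) + \tfrac{\al t}{2}U_{n-1}(t/2)\bigr]/t$, and then using $tU_{n-1}(t/2) = U_n(t/2) + U_{n-2}(t/2)$ together with $2T_n = U_n - U_{n-2}$ to collapse the bracket. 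The factor $(t^2 - 4)U_{n-1}(t/2)$ is, up to sign, just $-4$ times the "square-sine"-type factor; with $t = 2\cos\phi$ it equals $-4\sin^2\phi \cdot \sin(2n\phi)/\sin(2\phi)$, which in trig form makes the matching against the left-hand side routine.

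The main obstacle I anticipate is bookkeeping the signs and the parity factors $(-1)^n$: the substitution $\la = 4-t^2$ corresponds to $t = 2\cos\phi$ with $\la = 4\sin^2\phi = 2 - 2\cos 2\phi$, so $\la - 2 = -2\cos 2\phi$, and one must be careful that $(\la-2)/2 = -\cos 2\phi$ rather than $+\cos 2\phi$ — i.e. the right substitution giving clean formulas may be $t = 2\cos\phi$ with $(\la-2)/2 = 1 - t^2/2 = 1 - 2\cos^2\phi = -\cos 2\phi$. This forces the use of $U_{n-1}(-\cos 2\phi) = (-1)^{n-1}\sin(2n\phi)/\sin 2\phi$ and $T_n(-\cos 2\phi) = (-1)^n\cos 2n\phi$, which is exactly where the overall $2(-1)^n$ prefactor and the $1/t$ denominator originate. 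Once these sign conventions are pinned down, the remaining computation is a short application of sum-to-product identities (or of the Chebyshev contiguous relations), so I would present it as a direct verification rather than a structural argument. A sensible safeguard, which the authors seem to favor, is to note that both sides are polynomials in $t$ of the same degree and to check the identity symbolically for small $n$ as a sanity check.
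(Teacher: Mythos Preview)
Your plan is sound: the direct trigonometric verification via $t=2\cos\phi$, combined with the parity rules $T_k(-x)=(-1)^kT_k(x)$ and $U_k(-x)=(-1)^kU_k(x)$ to convert $U_{n-1}\bigl(\tfrac{\la-2}{2}\bigr)=U_{n-1}(-\cos 2\phi)$ etc., does collapse both sides to the same expression
\[
(-1)^{n+1}\,\frac{4\sin(n\phi)}{\cos\phi}\Bigl[(1-\al)\sin\phi\cos(n\phi)+\al\cos\phi\sin(n\phi)\Bigr],
\]
and the sign bookkeeping you flag is exactly the delicate point. So there is no gap.

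The paper, however, proceeds quite differently. It treats the parities of $n$ separately (writing $n=2m$ or $n=2m+1$), sets $\la=2\om+2$ so that $t^2=2-2\om$, and then uses the \emph{index-halving} Chebyshev identities
\[
U_{2m-1}(\om)=2U_{m-1}(\om)T_m(\om),\quad
T_m^2(\om)-1=(\om^2-1)U_{m-1}^2(\om),\quad
T_{2m}\!\left(\tfrac{t}{2}\right)=T_m\!\left(\tfrac{t^2-2}{2}\right),
\]
to pull the factor $U_{m-1}(\om)$ out of $D_{\al,2m}(2\om+2)$ explicitly. Your route avoids the even/odd case split and the halving identities entirely, at the price of working with the trig parametrization and tracking the $(-1)^n$ carefully; the paper's route stays purely polynomial and makes the provenance of the two factors $p_n$ and $q_{\al,n}$ (as ``half-index'' objects) more transparent, but needs two cases. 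Either argument is short once the right identities are in hand.
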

\begin{proof}
We will give a proof only for the case  $n=2m$. 
The case $n=2m+1$ is similar.
First, put $\la=2\om+2$, hence $t^2 = 2 - 2\om$. 
We apply the following elementary relations for Chebyshev polynomials: 
\begin{align*}
    U_{2m-2}(\om) &= -U_{2m}(\om) + 2\om U_{2m-1}(\om), \\
    U_{2m-1}(\om) &=2U_{m-1}(\om) T_{m}(\om) , \\
    U_{2m}(\om) &= 2\om U_{m-1}(\om)T_{m}(\om) +2T_m^2(\om) -1, \\
    T_m^2(\om) - 1 &= (\om^2-1) U_{m-1}^2(\om), \\
     T_{2m}\left(\frac{t}{2}\right) & = T_m\left(\frac{t^2-2}{2}\right),\quad  U_{2m+1}\left(\frac{t}{2}\right) = t U_m\left(\frac{t^2-2}{2}\right).
\end{align*}
Thereby we obtain the next chain of equalities:
\begin{align*}
    D_{\al,2m}(2\om+2) & = 2\Bigl(\al U_{2m}(\om) + (\om+1-\al-2\al \om)U_{2m-1}(\om) - \al\Bigr) \\
    & = 4\Bigl( (\om+1)(1 - \al) U_{m-1}(\om) T_m(\om)  + \al (T_m^2(\om)-1)\Bigr) \\
    & = 4 (\om +1) U_{m-1}\left(\om\right) \Bigl((1-\al)T_{m}\left(\om\right) - \al (1-\om) U_{m-1 }\left(\om\right)\Bigr),
\end{align*}
and we arrive at~\eqref{eq:char_pol_fact}.
\end{proof}

The factorization~\eqref{eq:char_pol_fact} after the change of variable $t = 2\cos(x/2)$ reads as
\begin{equation}\label{eq:char_pol_fact_trig} D_{\al,n}(g(x)) = D_{\al,n}(4-(2\cos(x/2))^2) = (-1)^n \frac{p_{n}(2\cos(x/2))q_{\al,n}(2\cos(x/2))}{\cos(x/2)}, \end{equation}
where
\[ p_n(2\cos(x/2)) = -4\sin\frac{x}{2} \sin\frac{nx}{2},  \qquad q_{\al,n}(2\cos(x/2)) = (1-\al) \cos\frac{nx}{2} + \al\cos\frac{x}{2} \frac{\sin\frac{nx}{2}}{\sin\frac{x}{2}}, \]
or 
\begin{equation}
\label{eq:charpol_factorization_trig}
D_{\al,n}(g(x)) = (-1)^{n+1} \frac{4\sin\frac{x}{2}\sin\frac{nx}{2}}{\cos\frac{x}{2}} \left( (1-\al)\cos\frac{nx}{2} + \al \cos\frac{x}{2} \frac{\sin\frac{nx}{2}}{\sin\frac{x}{2}}\right).
\end{equation}
The polynomial $p_{n}$ does not depend on $\al$, and its zeros are easy to find.

\begin{prop}[trivial eigenvalues of $L_{\al,n}$]\label{prop:trivial_eigvals}
For every $n\ge3$ and every even $k$ with $0\le k\le n-1$, the number $g(k\pi/n)$ is an eigenvalue of $L_{\al,n}$.
\end{prop}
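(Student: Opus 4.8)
The plan is to read off these eigenvalues from the factorization of the characteristic polynomial obtained in Proposition~\ref{prop:pol_char_factorized}. After the substitution $\la=4-t^2$ we have $D_{\al,n}(4-t^2)=2(-1)^n p_n(t)q_{\al,n}(t)/t$ with $p_n(t)=(t^2-4)U_{n-1}(t/2)$, and the factor $p_n$ does not involve $\al$. So it suffices to exhibit, for each even $k$ with $0\le k\le n-1$, a number $t_k$ that is a zero of $p_n$ and satisfies $4-t_k^2=g(k\pi/n)$; the $1/t$ in the factorization only requires $t_k\neq0$.

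Concretely, I would set $j\eqdef k/2$, so that $0\le j\le (n-1)/2$ and in particular $0\le j\pi/n<\pi/2$, and put $t_k\eqdef 2\cos(j\pi/n)=2\cos\bigl(k\pi/(2n)\bigr)$. Since $j\pi/n$ lies strictly below $\pi/2$, we get $t_k\in(0,2]$; the point $t_k\neq0$ is exactly what makes the right-hand side of~\eqref{eq:char_pol_fact} well defined at $t=t_k$. A direct computation, using~\eqref{eq:g_main}, gives
\[
4-t_k^2=4-4\cos^2\!\frac{j\pi}{n}=4\sin^2\!\frac{j\pi}{n}=g\!\left(\frac{2j\pi}{n}\right)=g\!\left(\frac{k\pi}{n}\right).
\]
Next I would check $p_n(t_k)=0$. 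Writing $p_n(t_k)=(t_k^2-4)\,U_{n-1}\bigl(\cos(j\pi/n)\bigr)$: if $j=0$ then $t_k=2$ and the factor $t_k^2-4$ vanishes; if $j\ge1$ then $0<j\pi/n<\pi$, so $\sin(j\pi/n)\neq0$ and $U_{n-1}\bigl(\cos(j\pi/n)\bigr)=\sin(j\pi)/\sin(j\pi/n)=0$. In either case $p_n(t_k)=0$.

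Combining these facts with Proposition~\ref{prop:pol_char_factorized},
\[
D_{\al,n}\!\left(g\!\left(\frac{k\pi}{n}\right)\right)=D_{\al,n}(4-t_k^2)=2(-1)^n\,\frac{p_n(t_k)\,q_{\al,n}(t_k)}{t_k}=0,
\]
so $g(k\pi/n)$ is a root of the characteristic polynomial of $L_{\al,n}$, hence an eigenvalue. I expect essentially no obstacle here: the only care needed is the admissibility of the substitution, i.e.\ ensuring $t_k\neq0$, which is precisely why the statement restricts $k$ to $0\le k\le n-1$ (an even value $k=n$, possible for even $n$, would force $j\pi/n=\pi/2$ and $t_k=0$), together with the trivial bookkeeping of the $j=0$ case, where $g(0)=0$ is in any event the standard laplacian eigenvalue with eigenvector $[1,\dots,1]^\top$.
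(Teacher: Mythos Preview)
Your argument is correct and follows essentially the same route as the paper: both proofs invoke the factorization from Proposition~\ref{prop:pol_char_factorized}, take $t_k=2\cos(k\pi/(2n))$, and check that this is a zero of the $\al$-free factor $p_n$, whence $g(k\pi/n)=4-t_k^2$ is a root of $D_{\al,n}$. You have simply fleshed out the details the paper leaves implicit---the case split $k=0$ versus $k\ge2$ for why $p_n(t_k)=0$, and the verification $t_k\neq0$ needed so that the $1/t$ in~\eqref{eq:char_pol_fact} causes no trouble.
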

\begin{proof}
The number $t = 2\cos(k\pi/(2n))$, with $k$ as in the hypothesis, is a  zero of $p_n$.
It corresponds to the eigenvalue $\la = 4-t^2 = g(k\pi/n)$, since $g(x) = 4-(2\cos(x/2))^2$.
\end{proof}

We already have an explicit formula for $\lfloor (n+1)/2 \rfloor$ eigenvalues of $L_{\al,n}$. 
The remaining ones correspond to the zeros of the polynomial $q_{\al,n}$.
To analyze their localization, we first compute the values of $q_{\al,n}$ at the points $2\cos(j\pi/(2n))$ which correspond to the uniform mesh $j\pi/n$, $j=0,\ldots, n$.

The next lemma is easily proven by direct computations.

\begin{lem}\label{lem:q_n_eval_jpi/n}
For every $j$ with $1\le j\le n-1$,
\begin{equation*}
    q_{\al,n}\left(2\cos\frac{j\pi}{2n}\right) = \begin{cases}
        (1-\al) (-1)^{\frac{j}{2}}, & \quad \text{if } j \text{ is even}, \\
        \al\cot\frac{j\pi}{2n} (-1)^{\frac{j-1}{2}}, & \quad \text{if } j \text{ is odd}.
    \end{cases}
\end{equation*}
Moreover,
\begin{equation*}
    q_{\al,n}(0) = 
    \begin{cases}
        0, & \quad \text{if}\ n\ \text{is odd}, \\
        (-1)^{\frac{n}{2}} (1-\al), &\quad \text{if}\ n\ \text{is even},
    \end{cases}
     \qquad q_{\al,n}(2) = (1-\al) + \al n.
\end{equation*}
\end{lem}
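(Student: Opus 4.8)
The plan is to prove Lemma~\ref{lem:q_n_eval_jpi/n} by direct substitution into the closed form $q_{\al,n}(t) = (1-\al)T_n(t/2) + \al\,(t/2)\,U_{n-1}(t/2)$ given in Proposition~\ref{prop:pol_char_factorized}, using the trigonometric representations of the Chebyshev polynomials. Setting $t = 2\cos(j\pi/(2n))$ and writing $\phi = j\pi/(2n)$, we have $T_n(\cos\phi) = \cos(n\phi) = \cos(j\pi/2)$ and $U_{n-1}(\cos\phi) = \sin(n\phi)/\sin\phi = \sin(j\pi/2)/\sin(j\pi/(2n))$, valid since $\sin\phi\ne0$ for $1\le j\le n-1$. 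Hence
\[
q_{\al,n}\!\left(2\cos\tfrac{j\pi}{2n}\right)
= (1-\al)\cos\tfrac{j\pi}{2}
+ \al\cos\tfrac{j\pi}{2n}\,\frac{\sin\tfrac{j\pi}{2}}{\sin\tfrac{j\pi}{2n}} .
\]
When $j$ is even, $\sin(j\pi/2)=0$ and $\cos(j\pi/2)=(-1)^{j/2}$, so the second term vanishes and the first gives $(1-\al)(-1)^{j/2}$. When $j$ is odd, $\cos(j\pi/2)=0$ and $\sin(j\pi/2)=(-1)^{(j-1)/2}$, so the first term vanishes and the second gives $\al\cot(j\pi/(2n))(-1)^{(j-1)/2}$. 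This settles the main case statement.

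For the boundary values, I would evaluate at $t=0$ and $t=2$ directly. At $t=0$: $q_{\al,n}(0) = (1-\al)T_n(0)$, and $T_n(0) = \cos(n\pi/2)$, which is $0$ for $n$ odd and $(-1)^{n/2}$ for $n$ even, giving the stated formula. At $t=2$: $q_{\al,n}(2) = (1-\al)T_n(1) + \al\cdot 1\cdot U_{n-1}(1)$, and since $T_n(1)=1$ and $U_{n-1}(1)=n$, this equals $(1-\al)+\al n$. Alternatively one can take the limit $j\to 0$ and $j\to n$ in the trigonometric expression above, but plugging in the standard special values $T_n(0),\,T_n(1),\,U_{n-1}(1)$ is cleaner and avoids any limiting argument.

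There is essentially no serious obstacle here; the only point requiring a small amount of care is the bookkeeping of the parity of $j$ together with the exact sign $\cos(j\pi/2)=(-1)^{j/2}$ for even $j$ and $\sin(j\pi/2)=(-1)^{(j-1)/2}$ for odd $j$, and the observation that one of the two terms always drops out. One should also note explicitly that the restriction $1\le j\le n-1$ guarantees $\sin(j\pi/(2n))\neq 0$, so the identity $U_{n-1}(\cos\phi)=\sin(n\phi)/\sin\phi$ is legitimately applied, and that the $t=0$ and $t=2$ cases (i.e. $j=0$ and $j=n$) are exactly the ones excluded from that range and therefore handled separately.
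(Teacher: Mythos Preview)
Your proof is correct and is exactly the ``direct computation'' the paper alludes to (the paper gives no further details beyond that phrase). The only microscopic quibble is in your closing commentary: at $j=n$ one has $\sin(j\pi/(2n))=\sin(\pi/2)=1\neq0$, so the trigonometric identity for $U_{n-1}$ would in fact still apply there; it is only $j=0$ that forces the separate treatment via $\sin\phi=0$, while $t=0$ is singled out simply because the lemma states it separately. This does not affect the argument.
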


We observe that if $n$ is even, then $p_n(0)=0$, and if $n$ is odd, then $q_{\al,n}(0) = 0$. However, $t=0$ may not be a zero of $D_{\al,n}(4-t^2)$ because of the factor $1/t$ in~\eqref{eq:char_pol_fact}. 
This leads us to the next elementary lemma.

\begin{lem}\label{lem:limits_pq}
If $n$ is odd, then
\begin{equation*}
    \lim_{t\to 0^+} \frac{2q_{\al,n}(t)}{t} = (-1)^{\frac{n-1}{2}} \Bigl( \al + (1-\al)n \Bigr) ,
\end{equation*}
and if $n$ is even, then
\begin{equation*}
    \lim_{t\to 0^+} \frac{2p_{n}(t)}{t} = 4(-1)^{\frac{n}{2}} n.
\end{equation*}
\end{lem}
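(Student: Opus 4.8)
The plan is to recognize both one-sided limits as indeterminate forms of type $0/0$ and evaluate them by a single application of l'Hôpital's rule (equivalently, by reading off the linear Taylor coefficient at $t=0$), using only the standard values and derivatives of the Chebyshev polynomials at the origin.

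First I would record the elementary facts $T_n(0)=\cos(n\pi/2)$, $U_{n-1}(0)=\sin(n\pi/2)$, together with $T_n'(0)=n\,U_{n-1}(0)$ (from $T_n'=n\,U_{n-1}$) and $U_{n-1}'(0)=-n\,T_n(0)$ (obtained by differentiating $U_m(\cos\theta)=\sin((m+1)\theta)/\sin\theta$ at $\theta=\pi/2$). In particular, $T_n(0)=0$ and $U_{n-1}(0)=(-1)^{(n-1)/2}$ when $n$ is odd, while $U_{n-1}(0)=0$ and $T_n(0)=(-1)^{n/2}$ when $n$ is even; these are exactly the parities that make the numerators vanish at $t=0$. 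For $n$ odd, Lemma~\ref{lem:q_n_eval_jpi/n} already gives $q_{\al,n}(0)=0$, and for $n$ even one checks directly that $p_n(0)=(0-4)U_{n-1}(0)=0$.

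Then I would differentiate. For $n$ odd,
\[
q_{\al,n}'(t)=\tfrac{1-\al}{2}T_n'\!\left(\tfrac{t}{2}\right)
+\al\left(\tfrac12 U_{n-1}\!\left(\tfrac t2\right)+\tfrac t4 U_{n-1}'\!\left(\tfrac t2\right)\right),
\]
so $q_{\al,n}'(0)=\tfrac12 U_{n-1}(0)\bigl((1-\al)n+\al\bigr)$, whence $\lim_{t\to0^+}2q_{\al,n}(t)/t=2q_{\al,n}'(0)=(-1)^{(n-1)/2}\bigl(\al+(1-\al)n\bigr)$. For $n$ even,
\[
p_n'(t)=2t\,U_{n-1}\!\left(\tfrac t2\right)+\tfrac{t^2-4}{2}U_{n-1}'\!\left(\tfrac t2\right),
\]
so $p_n'(0)=-2\,U_{n-1}'(0)=2n\,T_n(0)=2n(-1)^{n/2}$, and $\lim_{t\to0^+}2p_n(t)/t=2p_n'(0)=4(-1)^{n/2}n$.

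There is no real obstacle here; the lemma is genuinely elementary, as the surrounding text indicates. The only thing requiring care is bookkeeping the parities $(-1)^{(n-1)/2}$ versus $(-1)^{n/2}$ and making sure the $0/0$ structure is correctly matched to the parity of $n$ (numerator $q_{\al,n}$ vanishes precisely when $n$ is odd, numerator $p_n$ precisely when $n$ is even). An alternative route — substituting $t=2\cos(x/2)$ and expanding the trigonometric expressions for $p_n,q_{\al,n}$ near $x=\pi$ — also works, but the l'Hôpital computation on the polynomial forms is shorter and cleaner.
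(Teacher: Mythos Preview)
Your argument is correct: the l'H\^opital/linear-coefficient computation you outline, together with the Chebyshev identities $T_n(0)=\cos(n\pi/2)$, $U_{n-1}(0)=\sin(n\pi/2)$, $T_n'=nU_{n-1}$ and $U_{n-1}'(0)=-nT_n(0)$, gives exactly the stated limits with the right signs. The paper itself omits the proof entirely, labeling the lemma ``elementary,'' so your write-up is precisely the kind of direct verification the authors had in mind; there is no alternative approach in the paper to compare against.
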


\begin{proof}[Proof of Theorem~\ref{thm:Localization_weak_eigenvals}]
Let $1\le j\le n$. If $j$ is odd, then~\eqref{eq:localization_eigvals_weak_odd} follows by Proposition~\ref{prop:trivial_eigvals}.

We consider the quotient $q_{\al,n}(2\cos(x/2))/(2\cos(x/2))$ from factorization~\eqref{eq:char_pol_fact_trig}. 
Lemmas~\ref{lem:q_n_eval_jpi/n} and~\ref{lem:limits_pq} imply that this expression changes its sign in the intervals $I_{n,j}$, where $j$ is even. By the intermediate value theorem, we have~\eqref{eq:localization_eigvals_weak_even}.
\end{proof}

Theorem~\ref{thm:Localization_weak_eigenvals} implies immediately that for every $0<\al<1$ and for every $y$ in $\bR$,
\begin{equation*}
    \lim_{n\to\infty} \frac{\#\{j\in\{1,\ldots,n\}\colon \ \la_{\al,n,j}\le y \}}{n}  = \frac{\mu\left(\{x\in[0,\pi]\colon g(x)\le y \}\right)}{\pi},
\end{equation*}
i.e., the eigenvalues of $L_{\al,n}$ are asymptotically distributed as the function $g$ on $[0,\pi]$.

\section{Main equation}
\label{sec:main_equation}

In this section we reduce the computation of the non-trivial eigenvalues to the solution of the ``main equation''~\eqref{eq:main_eq}.
We recall it here:
\[
x = d_{n,j}+\frac{\eta_\al(x)}{n}.
\]

\begin{proof}[Proof of Theorem~\ref{thm:weak_characteristic_equation_L}]
Recall that $j$ is even.
In the proof of Theorem~\ref{thm:Localization_weak_eigenvals} we have seen that $\tht_{\al,n,j}$ belongs to $I_{n,j}$ and is the unique solution of the equation $q_{\al,n}(2\cos(x/2)) = 0$. This is equivalent to the following one (see also~\eqref{eq:char_pol_fact_trig}):
\begin{equation}\label{eq:tangent_equality_L}
    \tan\frac{nx}{2} = -\frac{1-\al}{\al}\tan\frac{x}{2}.
\end{equation}
Applying $\arctan$ to both sides of~\eqref{eq:tangent_equality_L} we transform it to
\[ 
nx = j\pi - 2\arctan\left(\frac{1-\al}{\al}\tan\frac{x}{2}\right). 
\]
Finally, since $\pi/2-\arctan(u) = \arctan(1/u)$, we obtain~\eqref{eq:main_eq}. 
\end{proof}

Figure~\ref{fig:tangents_weak_L} shows the plots of both sides of~\eqref{eq:tangent_equality_L} for some $\al$ in $(0,1)$.
We see that the intersections really take place in the intervals given in Theorem~\ref{thm:Localization_weak_eigenvals}.

\begin{figure}[htb]
\centering
\includegraphics{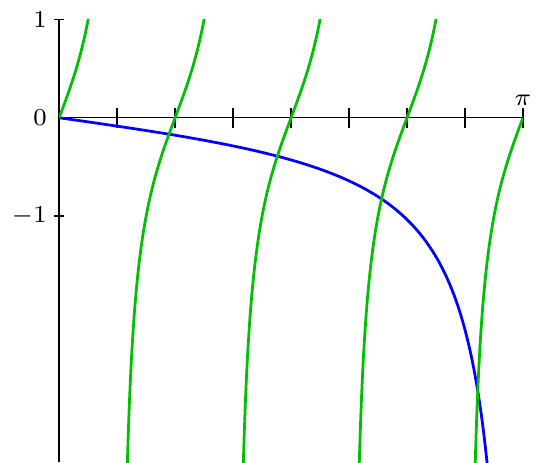}
\caption{The left-hand side (green) and right-hand side (blue) of~\eqref{eq:tangent_equality_L} 
for $\al=0.7$ and $n=8$;
the scales of the axes are different.
\label{fig:tangents_weak_L}
}
\end{figure}

Recall that $h_{\al,n,j}$ is defined by~\eqref{eq:h_char}.
Obviously,~\eqref{eq:main_eq} is equivalent to $h_{\al,n,j}(x)=0$.

\begin{prop}\label{prop:h_change_sign}
Let $n\ge3$ and $j$ be even with $1\le j\le n$.
Then $h_{\al,n,j}$ changes its sign exactly once in $I_{n,j}$.
\end{prop}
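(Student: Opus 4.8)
The plan is to show that $h_{\al,n,j}$ is strictly increasing on $\clos(I_{n,j})$, so that ``changes sign once'' follows once we check the endpoint signs. First I would compute the derivative:
\[
h_{\al,n,j}'(x) = n - \eta_\al'(x).
\]
Since $\eta_\al$ strictly decreases on $[0,\pi]$, we have $\eta_\al'(x)\le 0$ there, hence $h_{\al,n,j}'(x)\ge n>0$ on all of $[0,\pi]$, in particular on $\clos(I_{n,j})$. (Concretely, differentiating $\eta_\al(x)=2\arctan(\ka_\al\cot(x/2))$ gives $\eta_\al'(x) = -\ka_\al/(\sin^2(x/2)+\ka_\al^2\cos^2(x/2))<0$, so one even gets the sharper bound $h_{\al,n,j}'(x)\ge n$.) Thus $h_{\al,n,j}$ is strictly increasing on $\clos(I_{n,j})$, so it has at most one zero there, and it changes sign at most once.

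Next I would check the signs at the two endpoints of $I_{n,j}$, namely $d_{n,j}=(j-1)\pi/n$ and $d_{n,j+1}=j\pi/n$. At the left endpoint, $h_{\al,n,j}(d_{n,j}) = n\,d_{n,j} - (j-1)\pi - \eta_\al(d_{n,j}) = -\eta_\al(d_{n,j})$. Since $d_{n,j}\in(0,\pi)$ (as $1\le j-1$ and $j-1\le n-1<n$ because $j$ is even and $1\le j\le n$ forces $2\le j\le n$), and $\eta_\al$ takes values strictly between $0$ and $\pi$ on the open interval, we get $h_{\al,n,j}(d_{n,j}) = -\eta_\al(d_{n,j})<0$. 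At the right endpoint, $h_{\al,n,j}(d_{n,j+1}) = n\,d_{n,j+1}-(j-1)\pi-\eta_\al(d_{n,j+1}) = \pi - \eta_\al(d_{n,j+1})>0$, again because $\eta_\al(d_{n,j+1})<\pi$. Hence $h_{\al,n,j}$ is negative at the left endpoint, positive at the right endpoint, and strictly increasing in between, so it has exactly one zero in $I_{n,j}$ and changes sign exactly once there. This also re-derives the consistency with Theorem~\ref{thm:Localization_weak_eigenvals}, since that unique zero is precisely $\tht_{\al,n,j}$.

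There is essentially no serious obstacle here; the only point requiring a little care is the edge case $j=n$ (possible only when $n$ is even), where $d_{n,j+1}=(n+1)\pi/n>\pi$ lies outside the domain $[0,\pi]$ of $\eta_\al$. In that situation I would instead argue that $\tht_{\al,n,n}\in I_{n,n}=((n-1)\pi/n,\pi)$: the right endpoint is $x=\pi$, where $\cot(x/2)=0$ so $\eta_\al(\pi)=0$ and $h_{\al,n,n}(\pi)=n\pi-(n-1)\pi-0=\pi>0$, while $h_{\al,n,n}((n-1)\pi/n)=-\eta_\al((n-1)\pi/n)<0$ as before; so the same monotonicity argument on $\clos(I_{n,n})=[(n-1)\pi/n,\pi]$ gives exactly one sign change. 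Alternatively, one can simply invoke that $q_{\al,n}(2\cos(x/2))/(2\cos(x/2))$ was already shown in the proof of Theorem~\ref{thm:Localization_weak_eigenvals} to change sign in $I_{n,j}$ for even $j$, and that $h_{\al,n,j}(x)=0$ is equivalent (via equation~\eqref{eq:tangent_equality_L} and the $\arctan$ manipulation) to that quotient vanishing; combined with the strict monotonicity of $h_{\al,n,j}$ just established, this gives ``exactly once''.
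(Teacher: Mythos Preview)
Your proof is correct and follows exactly the paper's approach: compute $h_{\al,n,j}$ at the two endpoints of $I_{n,j}$, observe the signs are $-$ and $+$, and invoke strict monotonicity (from $h_{\al,n,j}'=n-\eta_\al'>0$). The paper's proof is simply the three-line version of this same argument.

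One small slip worth correcting: in your edge-case paragraph you write $d_{n,j+1}=(n+1)\pi/n$ when $j=n$, but by your own earlier formula $d_{n,j+1}=j\pi/n$, so for $j=n$ the right endpoint is exactly $\pi$, which \emph{is} in the domain of $\eta_\al$; hence there is no edge case at all (and the paper does not single one out). Your computation at $x=\pi$ is correct regardless, so this does not affect the argument.
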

\begin{proof}
Indeed,
\[
h_{\al,n,j}((j-1)\pi/n) = - \eta_\al((j-1)\pi/n)<0,
\]
\[
h_{\al,n,j}(j\pi/n) = \pi-\eta_\al(j\pi/n)>0,
\]
and $h_{\al,n,j}$ is strictly increasing.
\end{proof}

\begin{remark}\label{rem:case_al=1/2}
If $\al = 1/2$, then $\ka_{\frac{1}{2}} = 1$ and $\eta_{\al}(x) = \pi -x$.
In this case equation~\eqref{eq:main_eq} yields explicit formulas for the eigenvalues $\la_{\al,n,j}$ with even values of $j$:
\[
\tht_{\al,n,j}
= \frac{j\pi}{n+1},\qquad
\la_{\al,n,j}
= g\left(\frac{j\pi}{n+1}\right).
\]
\end{remark}

In the following proposition, unlike in the other parts of this paper, we fix $n$ and $j$ and treat $\al$ as a variable running through the closed interval $[0,1]$.
Formally, we define $\Psi_{n,j}\colon[0,1]\to[0,4]$ by
\[
\Psi_{n,j}(\al)\eqdef\la_{\al,n,j}.
\]

\begin{prop}[dependence of the eigenvalues on the parameter $\al$]
\label{prop:dependence_on_the_parameter}
Let $n\ge 3$ and $j$ be even, with $1\le j\le n$.
Then $\Psi_{n,j}$ is continuous and strictly increasing on $[0,1]$.
In particular,
\begin{align}
\label{eq:lim_lambda_at_alpha_zero}
\lim_{\al\to0^+}
\la_{\al,n,j}
&=\la_{0,n,j}
=g\left(\frac{(j-1)\pi}{n}\right),
\\[1ex]
\label{eq:lim_lambda_at_alpha_one}
\lim_{\al\to1^-}
\la_{\al,n,j}
&=\la_{1,n,j}
=g\left(\frac{j\pi}{n}\right).
\end{align}
\end{prop}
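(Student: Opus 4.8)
The plan is to work with the main equation~\eqref{eq:main_eq} and exploit the monotone dependence of $\eta_\al$, hence of $\tht_{\al,n,j}$, on the parameter $\al$; then transfer this to $\la_{\al,n,j}=g(\tht_{\al,n,j})$ and handle the endpoints $\al\to0^+$ and $\al\to1^-$ as limiting cases. First I would note that for fixed $x\in(0,\pi)$ the map $\al\mapsto\ka_\al=\al/(1-\al)$ is continuous and strictly increasing on $[0,1)$, sending $0$ to $0$ and $\al\to1^-$ to $+\infty$; since $\cot(x/2)>0$ on $(0,\pi)$ and $\arctan$ is strictly increasing, it follows that $\al\mapsto\eta_\al(x)=2\arctan(\ka_\al\cot(x/2))$ is continuous and strictly increasing on $[0,1)$ for each fixed $x\in(0,\pi)$, with $\eta_0(x)=0$ and $\lim_{\al\to1^-}\eta_\al(x)=\pi$.

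Next I would pin down $\tht_{\al,n,j}$ via the function $h_{\al,n,j}(x)=nx-(j-1)\pi-\eta_\al(x)$ from~\eqref{eq:h_char}. By Theorem~\ref{thm:weak_characteristic_equation_L} (equivalently Proposition~\ref{prop:h_change_sign}), for $j$ even $\tht_{\al,n,j}$ is the unique zero of $h_{\al,n,j}$ in $\clos(I_{n,j})$, and $h_{\al,n,j}$ is strictly increasing there. Fix $n,j$ and take $0\le\al_1<\al_2<1$. On the open interval $I_{n,j}\subset(0,\pi)$ we have $\eta_{\al_1}(x)<\eta_{\al_2}(x)$, hence $h_{\al_2,n,j}(x)<h_{\al_1,n,j}(x)$ pointwise on $I_{n,j}$; evaluating at $x=\tht_{\al_1,n,j}$ gives $h_{\al_2,n,j}(\tht_{\al_1,n,j})<h_{\al_1,n,j}(\tht_{\al_1,n,j})=0$, so by strict monotonicity of $h_{\al_2,n,j}$ the zero $\tht_{\al_2,n,j}$ lies strictly to the right: $\tht_{\al_1,n,j}<\tht_{\al_2,n,j}$. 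Since $g$ is strictly increasing on $[0,\pi]$, this yields $\la_{\al_1,n,j}<\la_{\al_2,n,j}$, i.e.\ strict monotonicity of $\Psi_{n,j}$ on $[0,1)$, and its extension to $[0,1]$ will follow once the endpoint values are identified. For continuity at an interior point $\al_0\in(0,1)$, I would use the implicit-function-type argument: $h_{\al,n,j}(x)$ is jointly continuous in $(\al,x)$ on $(0,1)\times\clos(I_{n,j})$ and $\partial_x h_{\al,n,j}=n-\eta_\al'(x)>0$ (as $\eta_\al$ decreases), so the unique root $\tht_{\al,n,j}$ depends continuously on $\al$; composing with the continuous $g$ gives continuity of $\Psi_{n,j}$ on $(0,1)$.

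The remaining work is the two endpoints. For $\al\to0^+$: as $\al\to0^+$, $\eta_\al(x)\to0$ uniformly on compact subsets of $(0,\pi)$, so the unique root of $h_{\al,n,j}$ converges to the root of $nx-(j-1)\pi=0$, namely $x=d_{n,j}=(j-1)\pi/n$; hence $\lim_{\al\to0^+}\la_{\al,n,j}=g(d_{n,j})$. I would then check $\la_{0,n,j}=g((j-1)\pi/n)$ directly: at $\al=0$ the matrix is $L_{0,n}$, whose eigenvalues are $g((k-1)\pi/n)$, $k=1,\dots,n$, and the ordering~\eqref{eq:eigvals_order} together with the localization pattern (odd indices give $g((j-1)\pi/n)$, even indices lie between consecutive such values — which collapse to $g((j-1)\pi/n)$ and $g(j\pi/n)$ as $\al\to0$, but at $\al=0$ one gets equality with the lower endpoint) identifies $\la_{0,n,j}=g((j-1)\pi/n)$; this gives~\eqref{eq:lim_lambda_at_alpha_zero} and continuity at $0$. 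For $\al\to1^-$: $\ka_\al\to+\infty$, so $\eta_\al(x)\to\pi$ pointwise on $(0,\pi)$, and the root of $h_{\al,n,j}$ converges to the root of $nx-(j-1)\pi-\pi=0$, i.e.\ $x=j\pi/n$; hence $\lim_{\al\to1^-}\la_{\al,n,j}=g(j\pi/n)$. Identifying $\la_{1,n,j}=g(j\pi/n)$ requires the known spectrum of the circulant $L_{1,n}$: its eigenvalues are $g(2\pi k/n)$, $k=0,\dots,n-1$, i.e.\ the multiset $\{g(2\pi k/n)\}$; one checks that in ascending order the $j$-th of these equals $g(j\pi/n)$ for even $j$ (the circulant eigenvalues come in coincident pairs $g(2\pi k/n)=g(2\pi(n-k)/n)$, and relabeling by the sorted sequence $g(0),g(\pi/n)\!\cdot\!?,\dots$ shows the even-index value is $g(j\pi/n)$ — this is the same counting already used implicitly in the introduction's remarks). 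This yields~\eqref{eq:lim_lambda_at_alpha_one} and continuity at $1$.

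The main obstacle I anticipate is not the monotonicity (which is a short convexity-free comparison of the two strictly increasing functions $h_{\al_1,n,j}$ and $h_{\al_2,n,j}$) but the careful bookkeeping at the endpoints: one must reconcile the limit $\lim_{\al\to0^+}\la_{\al,n,j}$ obtained from the main equation with the actual eigenvalue $\la_{0,n,j}$ of $L_{0,n}$ — in particular noting that at $\al=0$ the strict inequality~\eqref{eq:localization_eigvals_weak_even} degenerates to the \emph{left} equality, not the right one — and similarly match $\lim_{\al\to1^-}\la_{\al,n,j}$ with the correctly sorted eigenvalue of the circulant $L_{1,n}$, where eigenvalue multiplicities make the sorted-index accounting slightly delicate. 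An alternative that sidesteps the direct eigenvalue computations is to note that $\Psi_{n,j}$ is continuous and strictly increasing on $(0,1)$ with limits $g((j-1)\pi/n)$ and $g(j\pi/n)$ at the endpoints, then invoke continuity of eigenvalues of $L_{\al,n}$ in $\al$ (entries are polynomial in $\al$, so eigenvalues are continuous) to conclude $\Psi_{n,j}$ extends continuously to $[0,1]$ with those endpoint values; this is probably the cleanest route.
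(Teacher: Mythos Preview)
Your proposal is correct and reaches the same conclusion, but the route differs from the paper's in two places. For strict monotonicity on $(0,1)$, the paper computes the partial derivatives of $H_{n,j}(\al,x)=h_{\al,n,j}(x)$, shows $(D_1 H_{n,j})<0$ and $(D_2 H_{n,j})>n$, and invokes the implicit function theorem to get $\Tht_{n,j}'(\al)>0$; your comparison argument (if $\al_1<\al_2$ then $\eta_{\al_1}<\eta_{\al_2}$ pointwise, hence the zero of $h_{\al_2,n,j}$ lies to the right) is more elementary and avoids computing $\partial_\al\eta_\al$. For continuity, the paper goes the opposite way in economy: it quotes Weyl's theorem (Lipschitz continuity of ordered eigenvalues in the matrix entries) to get continuity of $\Psi_{n,j}$ on all of $[0,1]$ in one stroke, and then combines that with the strict increase on $(0,1)$ to extend monotonicity to the closed interval---so it never needs your explicit endpoint limit computations or the bookkeeping of the sorted circulant spectrum at $\al=1$. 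Your ``alternative'' in the last paragraph (continuity of eigenvalues because the entries are polynomial in $\al$) is exactly what the paper does, and is indeed the cleanest route; the endpoint identifications~\eqref{eq:lim_lambda_at_alpha_zero} and~\eqref{eq:lim_lambda_at_alpha_one} then follow immediately from continuity plus the known spectra of $L_{0,n}$ and $L_{1,n}$ stated in the introduction.
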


\begin{proof}
It is well known that the functions $A\mapsto \la_j(A)$ are Lipschitz continuous on the space of the hermitian matrices provided with the operator norm, see~\cite[Weyl's Theorem~4.3.1 and Problem~4.3.P1]{HornJohnson2013}. 
As a consequence, $\Psi_{n,j}$ is continuous on $[0,1]$.

To analyze the monotonicity, we will apply to the main equation
some ideas from the implicity function theorem.
Define $\Tht_{n,j}\colon(0,\pi)\to\bR$ and $H_{n,j}\colon (0,1)\times (0,\pi)\to\bR$ by
\[
\Tht_{n,j}(\al)\eqdef\tht_{\al,n,j},\qquad
H_{n,j}(\al,x)
\eqdef h_{\al,n,j}(x)
= nx - (j-1)\pi - \eta_\al(x).
\]
Compute the partial derivatives of $H_{n,j}$ with respect to the first and second argument:
\[
(D_1 H_{n,j})(\al,x)
= -\frac{2\tan\frac{x}{2}}%
{\al^2+(1-\al)^2\tan^2\frac{x}{2}}
<0,\qquad
(D_2 H_{n,j})(\al,x)
=n-\eta_\al'(x)>n.\]
Since $H_{n,j}(\al,\Tht_{n,j}(\al))=0$, we conclude that $\Tht_{n,j}$ is differentiable on $(0,1)$, and
\[
\Tht_{n,j}'(\al)
=-\frac{(D_1 H_{n,j})(\al,\Tht_{n,j}(\al))}{(D_2 H_{n,j})(\al,\Tht_{n,j}(\al))}>0.
\]
Hence, the functions $\Tht_{n,j}$ and $\Psi_{n,j}=g\circ\Tht_{n,j}$ are strictly increasing on $(0,1)$.
Now the continuity of $\Psi_{n,j}$ implies that this function is strictly increasing on $[0,1]$.
\end{proof}

Figure~\ref{fig:theta_vs_lambda} shows the eigenvalues $\la_{\al,n,j} = g(\tht_{\al,n,j})$ for $\al=1/3$ and $\al=4/5$, with $n=10$.
One can observe the localization of $\tht_{\al,n,j}$ in $I_{n,j}$ for even values of $j$ and the monotone dependence on $\al$.

\begin{figure}[htb]
\centering
\includegraphics{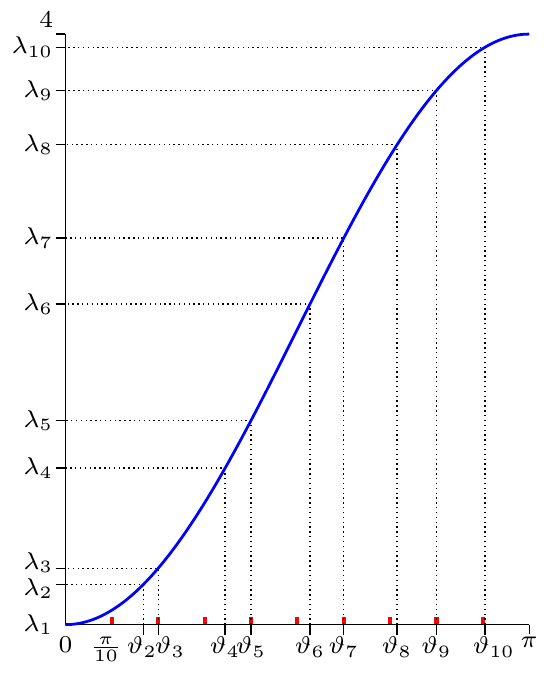}\qquad\quad\includegraphics{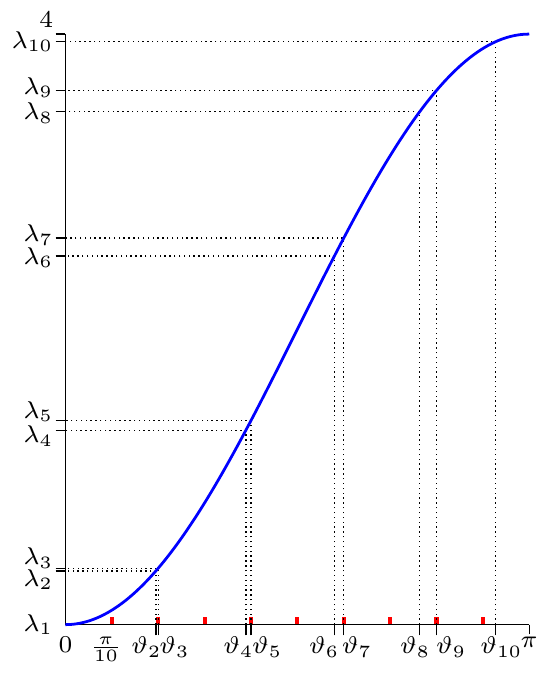}
\caption{The values $\tht_{\al,n,j}$ and $\la_{\al,n,j}$ for $\al=1/3$, $n=10$ (left)
and $\al=4/5$, $n=6$ (right);
the red marks on the horizontal axis correspond to $k\pi/10$, $1\le k\le 9$.
\label{fig:theta_vs_lambda}
}
\end{figure}

\section{Solving the main equation by the fixed-point method}
\label{sec:fixed_point}

We recall that $\eta_{\al,n}$ and $\ka_\al$ are defined by~\eqref{eq:eta} and~\eqref{eq:ka}, respectively, and that $\eta_\al$ does not depend of $n$.
Here are other equivalent formulas for $\eta_\al$: 
\begin{align}
\label{eq:eta1}
\eta_\al(x)
&=\pi-2\arctan\left(\frac{1-\al}{\al}\tan\frac{x}{2}\right),
\\
\label{eq:eta2}
\eta_\al(x)
&=2\arcsin\frac{\ka_\al \cos\frac{x}{2}}{\sqrt{\sin^2\frac{x}{2}+\ka_\al^2\cos^2\frac{x}{2}}},
\\
\label{eq:eta3}
\eta_\al(x)
&=2\arcsin\frac{\sqrt{2}\,\al\cos\frac{x}{2}}%
{\sqrt{(2\al^2-2\al+1)+(2\al-1)\cos(x)}}.
\end{align}
We notice that~\eqref{eq:eta} is more convenient to use if $x$ is close to $\pi$, while~\eqref{eq:eta1} is better for $x$ close to $0$.
The first two derivatives of $\eta_{\al,n}$ are
\begin{align}
\label{eq:etader}
\eta_\al'(x) &= - \frac{\ka_\al(1+\tan^2\frac{x}{2})}{\ka_\al^2+\tan^2\frac{x}{2}},
\\
\label{eq:eta_der_v2}
 \eta_\al'(x) &= - \frac{\ka_\al(1+\cot^2\frac{x}{2})}{1+\ka_\al^2\cot^2\frac{x}{2}},
\\
\label{eq:eta_der_2}
\eta_\al''(x) & =  \frac{(\ka_\al^2-1)\tan\frac{x}{2}}{\ka_\al^2+\tan^2\frac{x}{2}}\,\eta_\al'(x).
\end{align}
The incoming proposition gives some upper bounds for $\eta_\al'$ and $\eta_\al''$ for every $\al$ in $(0,1)$, involving the following numbers:
\begin{equation}\label{eq:K}
    \cK_1(\al) \eqdef \max\left\{\ka_\al,\frac{1}{\ka_\al}\right\},\qquad \cK_2(\al) \eqdef \frac{|\ka_\al^2-1|}{2\ka_\al} \cK_1(\al)=\frac{\cK_1^2(\al)-1}{2}.
\end{equation}

\begin{prop}\label{prop:eta_bound}
Each derivative of $\eta_\al$ is a bounded function on $(0,\pi)$. In particular, 
\begin{align}\label{eq:eta_der_bounded}
    \sup_{0<x<\pi}|\eta_\al'(x)| &= \cK_1(\al), \\ \label{eq:eta_der2_bounded}
    \sup_{0<x<\pi}|\eta_\al''(x)| & \le \cK_2(\al).
\end{align}
\end{prop}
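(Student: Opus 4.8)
The plan is to establish the two supremum bounds separately, working from the explicit formulas~\eqref{eq:etader}--\eqref{eq:eta_der_2}.

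For~\eqref{eq:eta_der_bounded}, I would use the substitution $s=\tan^2\frac{x}{2}$, which runs over $(0,\infty)$ as $x$ runs over $(0,\pi)$, and write $|\eta_\al'(x)| = \ka_\al(1+s)/(\ka_\al^2+s)$. If $\ka_\al\ge1$ this is a decreasing function of $s$ (since $\ka_\al^2\ge\ka_\al\ge1$ makes the numerator grow slower than the denominator), with supremum at $s\to0^+$ equal to $\ka_\al/\ka_\al^2 = 1/\ka_\al$; wait, that is the infimum direction. Let me reconsider: for $\ka_\al\ge1$, one checks $\frac{d}{ds}\frac{1+s}{\ka_\al^2+s} = \frac{\ka_\al^2-1}{(\ka_\al^2+s)^2}\ge0$, so the expression increases in $s$, and $\sup_{s>0}\ka_\al\frac{1+s}{\ka_\al^2+s} = \lim_{s\to\infty}\ka_\al\frac{1+s}{\ka_\al^2+s} = \ka_\al$. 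For $\ka_\al<1$ the monotonicity reverses and the supremum is the limit as $s\to0^+$, namely $\ka_\al/\ka_\al^2 = 1/\ka_\al$. In both cases the supremum equals $\max\{\ka_\al,1/\ka_\al\} = \cK_1(\al)$, which is exactly the claim; I would also note the supremum is not attained, consistent with the open interval.

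For~\eqref{eq:eta_der2_bounded}, I would combine the bound just proved with~\eqref{eq:eta_der_2}: $|\eta_\al''(x)| \le \cK_1(\al)\,\dfrac{|\ka_\al^2-1|\,\tan\frac{x}{2}}{\ka_\al^2+\tan^2\frac{x}{2}}$. With $u=\tan\frac{x}{2}\in(0,\infty)$, the remaining factor is $|\ka_\al^2-1|\,u/(\ka_\al^2+u^2)$, and by AM--GM $\ka_\al^2+u^2\ge2\ka_\al u$, so this factor is at most $|\ka_\al^2-1|/(2\ka_\al)$. Multiplying gives $|\eta_\al''(x)|\le \cK_1(\al)\cdot|\ka_\al^2-1|/(2\ka_\al) = \cK_2(\al)$, which is~\eqref{eq:eta_der2_bounded}; the identity $\cK_2(\al) = (\cK_1^2(\al)-1)/2$ used in~\eqref{eq:K} follows from a one-line case check on whether $\ka_\al\gtrless1$. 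Boundedness of all higher derivatives on $(0,\pi)$ then follows because $\eta_\al$ extends to a real-analytic function on a neighborhood of the compact interval $[0,\pi]$ (the denominators in~\eqref{eq:etader}--\eqref{eq:eta_der_2}, or rather a suitable analytic continuation via~\eqref{eq:eta2}, never vanish there, since $\sin^2\frac{x}{2}+\ka_\al^2\cos^2\frac{x}{2}>0$), so every derivative is continuous on the compact set $[0,\pi]$ and hence bounded.

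The only mildly delicate point is the monotonicity analysis of $\ka_\al(1+s)/(\ka_\al^2+s)$ and the correct identification of where the supremum lies in each of the two regimes $\ka_\al\lessgtr1$; once the sign of $\ka_\al^2-1$ is tracked carefully this is routine. I do not anticipate a real obstacle; this is a computation organized around one substitution and one application of AM--GM.
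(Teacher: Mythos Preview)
Your proposal is correct and follows essentially the same route as the paper: for~\eqref{eq:eta_der_bounded} the paper also reduces to the monotonicity of $\ka_\al(1+s)/(\ka_\al^2+s)$ in $s=\tan^2\frac{x}{2}$ (phrased via the rewriting $\eta_\al'(x)=-\ka_\al\bigl(1+\frac{1-\ka_\al^2}{\ka_\al^2+s}\bigr)$), and for~\eqref{eq:eta_der2_bounded} it uses exactly your AM--GM bound $\tan\frac{x}{2}/(\ka_\al^2+\tan^2\frac{x}{2})\le 1/(2\ka_\al)$ combined with the first-derivative estimate; the higher-derivative claim is handled by the same analytic-extension argument. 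The only cosmetic difference is that the paper invokes analyticity of $\eta_\al'$ from~\eqref{eq:etader} near $0$ and $\pi$ rather than your appeal to~\eqref{eq:eta2}.
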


\begin{proof}
In order to prove~\eqref{eq:eta_der_bounded},
we rewrite~\eqref{eq:etader} as follows:
\begin{equation}\label{eq:eta_der_version_2} \eta_\al'(x)=- \ka_\al\left( 1 + \frac{1-\ka_\al^2}{\ka_\al^2+\tan^2\frac{x}{2}} \right) \qquad (x\in (0,\pi)).
\end{equation}
We notice that $\tan^2(x/2)$ increases from $0$ to $\infty$ as $x$ goes from $0$ to $\pi$.
If $0< \al \le1/2$, then $\ka_\al\le 1$,
and $\eta_\al'$ increases taking values from
$\eta_\al'(0) = -\ka_\al^{-1}$ to
$\eta_\al'(\pi) = -\ka_\al$.
If $1/2<\al<1$, then $\eta'_\al$ decreases.
In both cases, the maximal value of $|\eta_\al'|$ is reached at one of the points $0$ or $\pi$.
This proves~\eqref{eq:eta_der_bounded}.

For the second derivative of $\eta_\al''$, from~\eqref{eq:eta_der_2} we get
\[ |\eta_\al''(x)| =   \frac{\tan\frac{x}{2}}{\ka_\al^2+\tan^2\frac{x}{2}} |\ka_\al^2-1| |\eta_\al'(x)| \le \frac{|\ka_\al^2-1|}{2|\ka_\al|}\cK_1(\al) \qquad (x\in (0,\pi)). \]
This is exactly~\eqref{eq:eta_der2_bounded}.

For the higher derivatives of $\eta_{\al,j}$, the explicit estimates are too tedious, and we propose the following argument.
By~\eqref{eq:etader},
$\eta_{\al}'$ is analytic in a neighborhood of $x$,
for any $x$ in $(0,\pi)$. Even more, $\eta_{\al}'$ has an analytic extension in some neighborhoods of the points $0$ and $\pi$.
Hence, $\eta_{\al,j}'$ has an analytic extension to a certain open set in the complex plane containing the segment $[0,\pi]$. Therefore, each derivative of this function is bounded on $(0,\pi)$.
\end{proof}

For every $j$, $1\le j\le n$, we define the function $f_{\al,n,j}\colon[0,\pi]\to\bR$ by
\begin{equation}\label{eq:L_f}
    f_{\al,n,j}(x)\eqdef
    d_{n,j}+\frac{\eta_\al(x)}{n},
\end{equation}
i.e., $f_{\al,n,j}(x)=((j-1)\pi+\eta_\al(x))/n$.
Hence~\eqref{eq:main_eq} can be written as $\tht_{\al,n,j} = f_{\al,n,j}(\tht_{\al,n,j})$.

\begin{prop}\label{prop:Z_contractive_weak}
Let $n> \cK_1(\al)$, and let $j$ be even, $1\le j\le n$. Then $f_{\al,n,j}$ is contractive in $\operatorname{clos}( I_{n,j})$. Its fixed point belongs to $I_{n,j}$ and coincides with $\tht_{\al,n,j}$.
\end{prop}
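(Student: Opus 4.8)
The plan is to verify directly the hypotheses of the Banach fixed-point theorem for the map $f_{\al,n,j}$ on the complete metric space $\clos(I_{n,j})$, and then to identify the fixed point. First I would check that $f_{\al,n,j}$ maps $\clos(I_{n,j})$ into itself. Since $\eta_\al$ takes values in $(0,\pi)$ on $(0,\pi)$ and in $[0,\pi]$ on $[0,\pi]$, the value $f_{\al,n,j}(x) = d_{n,j} + \eta_\al(x)/n = ((j-1)\pi + \eta_\al(x))/n$ lies in $[(j-1)\pi/n, j\pi/n] = \clos(I_{n,j})$; a slightly more careful look at the endpoints (using $0<\eta_\al<\pi$ on the open interval, and that $d_{n,1}=0$, $d_{n,n}=(n-1)\pi/n$ keep us inside $[0,\pi]$ when $j$ is even with $1\le j\le n$) shows the image even lands in $I_{n,j}$ except possibly for boundary inputs, which is enough.

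Next I would establish the contraction estimate. By the mean value theorem, for $x,y\in\clos(I_{n,j})$ we have $|f_{\al,n,j}(x)-f_{\al,n,j}(y)| \le (\sup_{0<t<\pi}|\eta_\al'(t)|/n)\,|x-y|$, and Proposition~\ref{prop:eta_bound}, specifically~\eqref{eq:eta_der_bounded}, gives $\sup_{0<t<\pi}|\eta_\al'(t)| = \cK_1(\al)$. Hence the Lipschitz constant of $f_{\al,n,j}$ is at most $\cK_1(\al)/n$, which is strictly less than $1$ precisely under the hypothesis $n>\cK_1(\al)$. Therefore $f_{\al,n,j}$ is contractive on $\clos(I_{n,j})$, and Banach's theorem yields a unique fixed point $x^*\in\clos(I_{n,j})$.

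Finally I would identify $x^*$ with $\tht_{\al,n,j}$. By Theorem~\ref{thm:weak_characteristic_equation_L}, the number $\tht_{\al,n,j}$ is the unique solution on $[0,\pi]$ of $x = d_{n,j}+\eta_\al(x)/n$, i.e.\ of $x = f_{\al,n,j}(x)$; since $\tht_{\al,n,j}\in I_{n,j}\subseteq\clos(I_{n,j})$ by the localization in Theorem~\ref{thm:Localization_weak_eigenvals}, it is in particular a fixed point of $f_{\al,n,j}$ in $\clos(I_{n,j})$, hence equals $x^*$ by uniqueness. It remains to note $x^*\in I_{n,j}$ rather than merely in its closure: this follows because the endpoints of $I_{n,j}$ are not fixed points (at $x=d_{n,j}$ one has $f_{\al,n,j}(d_{n,j}) = d_{n,j}+\eta_\al(d_{n,j})/n > d_{n,j}$, and at $x=d_{n,j+1}=j\pi/n$ one has $f_{\al,n,j}(j\pi/n) = d_{n,j}+\eta_\al(j\pi/n)/n < d_{n,j}+\pi/n = j\pi/n$), or simply from Theorem~\ref{thm:Localization_weak_eigenvals}. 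I do not expect any genuine obstacle here: the only point requiring mild care is the self-map property at the boundary of $\clos(I_{n,j})$, and that is handled by the strict bounds $0<\eta_\al<\pi$ on $(0,\pi)$ together with the parity restriction on $j$.
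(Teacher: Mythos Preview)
Your proposal is correct and follows essentially the same approach as the paper: verify that $f_{\al,n,j}$ maps $\clos(I_{n,j})$ into itself using $0\le\eta_\al\le\pi$, bound $|f_{\al,n,j}'|\le\cK_1(\al)/n<1$ via Proposition~\ref{prop:eta_bound}, apply Banach's fixed-point theorem, and identify the fixed point with $\tht_{\al,n,j}$ through Theorem~\ref{thm:weak_characteristic_equation_L}. Your additional remark that the endpoints are not fixed points is a harmless extra detail; the paper simply cites Theorem~\ref{thm:weak_characteristic_equation_L} for the membership in $I_{n,j}$.
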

\begin{proof}
Since $\eta_\al$ takes values in $[0,\pi]$,
for every $x$ in $\operatorname{clos}(I_{n,j})$ we get
\[ \frac{(j-1)\pi}{n}\le\frac{(j-1)\pi+\eta_{\al}(x)}{n}\le \frac{j\pi}{n}, \]
i.e., $f_{\al,n,j}(x)\in\operatorname{clos}(I_{n,j})$. 
By Proposition~\ref{prop:eta_bound}, $\eta_\al'$ is bounded by $\cK_1(\al)$,
hence
\[
\left|f_{\al,n,j}'(x)\right|\le\frac{\cK_1(\al)}{n}<1.
\]
This implies that $f_{\al,n,j}$ is a contractive function on $\operatorname{clos}(I_{n,j})$. 
Then, by the Banach fixed point theorem, $f_{\al,n,j}$ has a unique fixed point, and by Theorem~\ref{thm:weak_characteristic_equation_L} it coincides with $\tht_{\al,n,j}$ and belongs to $I_{n,j}$.
\end{proof}

\begin{cor}
Let $n> \cK_1(\al)$, $j$ be even, $1\le j\le n$, and $x_{\al,n,j}^{(0)}$ be an arbitrary point in $\operatorname{clos}(I_{n,j})$. 
Define the sequence
$\left(x_{\al,n,j}^{(m)}\right)_{m=0}^\infty$ by
\begin{equation*}
    x_{\al,n,j}^{(m)} \eqdef f_{\al,n,j}\left(x_{\al,n,j}^{(m-1)}\right) \qquad (m \ge 1).
\end{equation*}
Then
\begin{equation*}
    \left|x_{\al,n,j}^{(m)} - \tht_{\al,n,j}\right| \le \frac{\pi}{n}\left(\frac{\cK_1(\al)}{n}\right)^m \qquad (m \ge 0).
\end{equation*}
\end{cor}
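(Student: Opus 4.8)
The plan is to obtain this estimate as a direct consequence of Proposition~\ref{prop:Z_contractive_weak} together with the standard a~priori error bound for the Banach fixed-point iteration. First I would recall that, under the hypothesis $n>\cK_1(\al)$, Proposition~\ref{prop:Z_contractive_weak} guarantees three things: $f_{\al,n,j}$ maps $\clos(I_{n,j})$ into itself, it is Lipschitz on $\clos(I_{n,j})$ with constant $\cK_1(\al)/n<1$ (this is where the bound $|\eta_\al'|\le\cK_1(\al)$ from Proposition~\ref{prop:eta_bound} enters, via the mean value theorem), and $\tht_{\al,n,j}$ is its unique fixed point in that segment. In particular, since $x_{\al,n,j}^{(0)}\in\clos(I_{n,j})$, the whole sequence $(x_{\al,n,j}^{(m)})_{m\ge0}$ stays inside $\clos(I_{n,j})$.

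Next I would run an elementary induction on $m$, writing $q\eqdef\cK_1(\al)/n$ for brevity. For $m=0$ the claim reduces to $|x_{\al,n,j}^{(0)}-\tht_{\al,n,j}|\le\pi/n$, which holds because both points lie in $\clos(I_{n,j})$, an interval of length exactly $\pi/n$. For the inductive step, using $\tht_{\al,n,j}=f_{\al,n,j}(\tht_{\al,n,j})$ together with the contractivity just recalled,
\[
\left|x_{\al,n,j}^{(m)}-\tht_{\al,n,j}\right|
=\left|f_{\al,n,j}\left(x_{\al,n,j}^{(m-1)}\right)-f_{\al,n,j}\left(\tht_{\al,n,j}\right)\right|
\le q\left|x_{\al,n,j}^{(m-1)}-\tht_{\al,n,j}\right|
\le q\cdot\frac{\pi}{n}q^{m-1}=\frac{\pi}{n}q^{m},
\]
where the last inequality is the induction hypothesis. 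This is precisely the asserted bound.

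Since both ingredients — the self-mapping property and the contraction constant — are already established in Proposition~\ref{prop:Z_contractive_weak}, there is essentially no obstacle in this corollary; the only point deserving a word of care is the base case, namely observing that the diameter of $\clos(I_{n,j})$ equals $\pi/n$, so that the initial error is controlled uniformly for every admissible choice of $x_{\al,n,j}^{(0)}$ without any further assumption.
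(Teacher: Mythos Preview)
Your argument is correct and is essentially the same approach as the paper's: the paper simply says the corollary follows from Proposition~\ref{prop:Z_contractive_weak} and the Banach fixed-point theorem, and your induction on $m$ (with base case given by the diameter $\pi/n$ of $\clos(I_{n,j})$) is precisely the standard a~priori error estimate encoded in that theorem.
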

\begin{proof}
Follows from Proposition~\ref{prop:Z_contractive_weak} and Banach fixed point theorem.
\end{proof}

\section{Newton's method for convex functions}\label{sec:Newton_convex}

In this section we recall some sufficient conditions for the convergence of Newton's method.
Assume that $a,b\in\bR$ with $a<b$; $f$ is differentiable and $f'>0$ on $[a,b]$; there exists $c$ in $[a,b]$ such that $f(c)=0$;
$y^{(0)}$ is a point in $[a,b]$
and the sequence $(y^{(m)})_{m=0}^\infty$ is defined (when possible) by  the recurrence relation
\begin{equation}\label{eq:recursive_newton}
    y^{(m+1)} = y^{(m)} - \frac{f\left(y^{(m)}\right)}{f'\left(y^{(m)}\right)}.
\end{equation}
Obviously, if $y^{(m)}=c$ for some $m$, then the sequence is constant starting from this moment.

In general, the sufficient conditions for Newton's method are quite complicated (see, for example, Kantorovich theorem).
Nevertheless, it is well known that Newton's method converges for convex functions, when the initial point is chosen from the ``correct'' side of the root (\cite[Section 22, Problem 14]{Spivak1994} and \cite[Theorem 2.2]{A1989}).
In the following proposition we show an upper bound for the linear convergence in this case.

\begin{prop}[linear convergence of Newton's method for convex functions]\label{prop:convergence_f_convex_y0>c}
If $f$ is convex on $[a,b]$, $c\le y^{(0)}\le b$, then $y^{(m)}$ belongs to $[c,b]$ for every $m\ge0$, the sequence $(y^{(m)})_{m=0}^\infty$ decreases and converges to $c$, with
\begin{equation}\label{eq:linear_newton}
y^{(m)}-c
\le (b-a)\left(1 - \frac{f'(a)}{f'(b)}\right)^m.
\end{equation}
\end{prop}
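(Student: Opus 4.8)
The plan is to establish the three assertions (invariance of the interval, monotone decrease, and the linear rate) in that order, using convexity throughout. First I would recall the geometric picture: since $f'>0$, the root $c$ is unique, and $f<0$ on $[a,c)$ while $f>0$ on $(c,b]$; in particular if $c\le y^{(m)}\le b$ then $f(y^{(m)})\ge 0$, so the Newton step $y^{(m+1)}=y^{(m)}-f(y^{(m)})/f'(y^{(m)})$ satisfies $y^{(m+1)}\le y^{(m)}$. This immediately gives that the sequence is nonincreasing as long as it stays in $[c,b]$. To close the induction I must also show $y^{(m+1)}\ge c$. Here convexity enters: the tangent line to the graph of $f$ at any point lies below the graph, so the first-order Taylor inequality $f(c)\ge f(y^{(m)})+f'(y^{(m)})(c-y^{(m)})$ holds; since $f(c)=0$ and $f'(y^{(m)})>0$, rearranging gives $c\ge y^{(m)}-f(y^{(m)})/f'(y^{(m)})=y^{(m+1)}$ — wait, that is the wrong direction, so instead I would use that $y^{(m+1)}$ is exactly the $x$-intercept of that tangent line, and since the tangent lies below the graph and $f$ is increasing, the tangent's root cannot exceed the function's root $c$; combined with the convexity inequality this pins $y^{(m+1)}\in[c,y^{(m)}]$. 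Thus by induction $(y^{(m)})$ is a decreasing sequence in $[c,b]$.

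Next, a bounded monotone sequence converges; call the limit $\ell\in[c,b]$. Passing to the limit in the recurrence and using continuity of $f$ and $f'$ (and $f'(\ell)>0$) forces $f(\ell)/f'(\ell)=0$, hence $f(\ell)=0$, hence $\ell=c$ by uniqueness of the root. So far this is standard; the only real content is the convexity argument above guaranteeing we never overshoot past $c$.

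For the quantitative bound~\eqref{eq:linear_newton}, I would estimate the one-step contraction factor. Write $y^{(m+1)}-c=(y^{(m)}-c)-\bigl(f(y^{(m)})-f(c)\bigr)/f'(y^{(m)})$. Applying the mean value theorem to $f(y^{(m)})-f(c)$ gives a point $\xi\in(c,y^{(m)})$ with $f(y^{(m)})-f(c)=f'(\xi)(y^{(m)}-c)$, so
\[
y^{(m+1)}-c=(y^{(m)}-c)\left(1-\frac{f'(\xi)}{f'(y^{(m)})}\right).
\]
Since $f$ is convex, $f'$ is nondecreasing, so $f'(\xi)\ge f'(c)\ge f'(a)$ and $f'(y^{(m)})\le f'(b)$, whence the factor $1-f'(\xi)/f'(y^{(m)})$ lies in $[0,1-f'(a)/f'(b)]$. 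Iterating from $m=0$ and using $y^{(0)}-c\le b-a$ yields $y^{(m)}-c\le(b-a)(1-f'(a)/f'(b))^m$, which is~\eqref{eq:linear_newton}.

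The main obstacle is the first part — rigorously showing the iterates never jump below $c$; this is precisely where convexity is indispensable, and one must phrase the tangent-line-below-graph inequality carefully (it is where the hypothesis $c\le y^{(0)}$, i.e.\ starting on the correct side, is used, since on the other side of the root the tangent construction would overshoot). Once interval-invariance and monotonicity are in hand, convergence is routine and the rate estimate is a one-line mean value theorem computation combined with monotonicity of $f'$.
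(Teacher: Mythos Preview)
Your rate estimate via the mean value theorem is exactly the paper's argument, and the overall plan is correct. However, your handling of the interval-invariance step is tangled: the tangent-line inequality $0=f(c)\ge f(y^{(m)})+f'(y^{(m)})(c-y^{(m)})$ rearranges (correctly) to $y^{(m+1)}=y^{(m)}-f(y^{(m)})/f'(y^{(m)})\ge c$, not to $c\ge y^{(m+1)}$ as you wrote; your ``wait, wrong direction'' was triggered by an algebra slip, and the verbal fallback (``the tangent's root cannot exceed the function's root'') is stated backwards as well. The conclusion $y^{(m+1)}\in[c,y^{(m)}]$ is right, but the justification you gave does not stand as written.

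The paper is more economical here: it does not argue interval invariance separately. It writes the single identity
\[
y^{(m+1)}-c=(y^{(m)}-c)\Bigl(1-\frac{f'(\xi_m)}{f'(y^{(m)})}\Bigr),\qquad \xi_m\in[c,y^{(m)}],
\]
and observes that since $f'$ is positive and increasing the factor lies in $[0,\,1-f'(a)/f'(b)]$; this simultaneously gives $y^{(m+1)}\ge c$, $y^{(m+1)}\le y^{(m)}$, and the linear rate~\eqref{eq:linear_newton}. Your separate monotone-convergence step (pass to the limit, use $f(\ell)=0$) is then unnecessary, since the geometric bound already forces $y^{(m)}-c\to0$. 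So your route works once the sign confusion is fixed, but the paper's single-identity argument is shorter and avoids the place where you stumbled.
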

\begin{proof}
Reasoning by induction, suppose that $m\ge1$ and $b\ge y^{(m)}\ge c$.
By the mean value theorem, there exists $\xi_m\in[c,y^{(m)}]$ such that $f(y^{(m)})-f(c) = f'(\xi_m)(y^{(m)}-c)$, hence
\begin{equation}\label{eq:mvt_recurrence_newton}
    f(y^{(m)}) = (y^{(m)}-c)f'(\xi_m).
\end{equation}
Combining~\eqref{eq:recursive_newton} with~\eqref{eq:mvt_recurrence_newton} we obtain that
\begin{equation} y^{(m+1)}-c  = y^{(m)}-\frac{f(y^{(m)})}{f'(y^{(m)})}-c = \left(y^{(m)}-c\right) \left(1 - \frac{f'(\xi_m)}{f'(y^{(m)})}\right).  \end{equation}
Since $f'$ is positive and increasing on $[a,b]$,
\begin{equation} 0\le y^{(m+1)}-c \le \left(y^{(m)}-c\right) \left(1 - \frac{f'(a)}{f'(b)}\right). \end{equation}
This yields~\eqref{eq:linear_newton} and the convergence of the sequence.
\end{proof}

The next proposition provides a sufficient convergence condition, when starting from the ``bad'' side of the root. Then $y^{(1)}$ is on the ``good'' side of the root and Proposition~\ref{prop:convergence_f_convex_y0>c} can be applied to the sequence $(y^{(m)})_{m=1}^\infty$.

\begin{prop}\label{prop:first_eval_convex_y0<c}
Suppose that $f$ is convex on $[a,b]$, $a\le y^{(0)} < c$, and
\begin{equation}\label{eq:wrong_side_condition}
    a - \frac{f(a)}{f'(a)} \le b.
\end{equation}
Then $y^{(1)}$ belongs to $[c,b]$.
\end{prop}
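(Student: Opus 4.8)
The plan is to exploit convexity together with the hypothesis $a\le y^{(0)}<c$ in order to locate $y^{(1)}=y^{(0)}-f(y^{(0)})/f'(y^{(0)})$ inside $[c,b]$. First I would observe that $y^{(1)}\ge c$: since $f$ is convex, its graph lies above every tangent line, so the tangent at $y^{(0)}$ hits the $x$-axis at a point that is $\le c$ only if $y^{(0)}\ge c$; for $y^{(0)}<c$ the tangent line at $y^{(0)}$ lies below the graph, hence crosses zero at an abscissa that is at least as large as the true root $c$. Concretely, $f(c)=0=f(y^{(0)})+f'(y^{(0)})(y^{(1)}-y^{(0)})$ by definition of $y^{(1)}$, while convexity gives $0=f(c)\ge f(y^{(0)})+f'(y^{(0)})(c-y^{(0)})$; subtracting, and using $f'(y^{(0)})>0$, yields $y^{(1)}\ge c$. (One must note $f'(y^{(0)})>0$ so the Newton step is well defined and the inequality direction is preserved; this is part of the standing assumptions of the section.)

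Second I would show $y^{(1)}\le b$. The Newton iteration $t\mapsto t-f(t)/f'(t)$ is nondecreasing on the region where $f\le 0$: its derivative equals $f(t)f''(t)/f'(t)^2$, which is $\le 0$ when $f(t)\le 0$ and $f''\ge 0$. On $[a,c]$ we have $f\le 0$ (because $f$ is increasing and $f(c)=0$), so the Newton map is nonincreasing there — wait, more carefully: with $f\le0$ and $f''\ge0$ the derivative of the Newton map is $\le0$, so the Newton map is \emph{nonincreasing} on $[a,y^{(0)}]\subseteq[a,c]$? That is the wrong monotonicity for what I want. Let me instead argue directly: because $f$ is convex, the secant slope between $y^{(0)}$ and $c$ is at most $f'(c)\le f'(y^{(0)})$ is false in general; the cleanest route is monotonicity of the Newton map in the starting point. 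Since for $a\le t\le c$ one has $f(t)\le0$ and $f''(t)\ge0$, hence $\frac{d}{dt}\!\left(t-\frac{f(t)}{f'(t)}\right)=\frac{f(t)f''(t)}{f'(t)^2}\le0$, the Newton map is nonincreasing on $[a,c]$; therefore $y^{(1)}=\bigl(t-f(t)/f'(t)\bigr)\big|_{t=y^{(0)}}\le \bigl(t-f(t)/f'(t)\bigr)\big|_{t=a}=a-\frac{f(a)}{f'(a)}\le b$, where the last inequality is precisely hypothesis~\eqref{eq:wrong_side_condition}. Combining the two bounds gives $y^{(1)}\in[c,b]$.

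The main obstacle is getting the monotonicity direction right and making sure the sign bookkeeping is correct on the interval $[a,c]$: one needs $f\le0$ there (from $f$ increasing and $f(c)=0$), $f''\ge0$ there (from convexity), and $f'>0$ throughout (a standing assumption). With these three facts the derivative computation $\frac{d}{dt}(t-f(t)/f'(t))=f(t)f''(t)/f'(t)^2\le0$ is immediate, and then $y^{(1)}\le a-f(a)/f'(a)\le b$ follows from~\eqref{eq:wrong_side_condition}. I would present the $y^{(1)}\ge c$ half via the tangent-line (supporting line) inequality for convex functions, and the $y^{(1)}\le b$ half via this monotonicity of the Newton map, and conclude. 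Note that once $y^{(1)}\in[c,b]$ is established, Proposition~\ref{prop:convergence_f_convex_y0>c} applies to the tail sequence $(y^{(m)})_{m\ge1}$, which is the intended use downstream.
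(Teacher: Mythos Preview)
Your argument for $y^{(1)}\ge c$ via the supporting-line inequality at $y^{(0)}$ is exactly what the paper does. For $y^{(1)}\le b$, however, you take a different route: you differentiate the Newton map and use $\frac{d}{dt}\bigl(t-f(t)/f'(t)\bigr)=f(t)f''(t)/f'(t)^2\le 0$ on $[a,c]$ to conclude that the Newton map is nonincreasing there, whence $y^{(1)}\le a-f(a)/f'(a)\le b$. The paper instead uses a second tangent-line inequality, this time at $a$: from $f(y^{(0)})\ge f(a)+f'(a)(y^{(0)}-a)$ together with $f'(y^{(0)})\ge f'(a)$ and $f(y^{(0)})<0$ one gets directly
\[
y^{(1)}=y^{(0)}-\frac{f(y^{(0)})}{f'(y^{(0)})}\le y^{(0)}-\frac{f(a)+f'(a)(y^{(0)}-a)}{f'(a)}=a-\frac{f(a)}{f'(a)}\le b.
\]
The two arguments are morally the same (your monotonicity statement is precisely what the paper's chain of inequalities establishes between the two specific points $a$ and $y^{(0)}$), but the paper's version is slightly more robust: the standing hypotheses of the section assume only that $f$ is differentiable with $f'>0$, plus convexity; they do \emph{not} assume $f\in C^2$, so invoking $f''$ is, strictly speaking, an extra assumption. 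In the intended application $h_{\alpha,n,j}$ is analytic, so this is harmless in practice, but if you want a proof at the stated level of generality you should replace the $f''$ computation by the tangent-line estimate at $a$ (or argue the monotonicity of $t\mapsto t-f(t)/f'(t)$ directly from $f'$ being nondecreasing and $f\le 0$ on $[a,c]$, which amounts to the same two inequalities).
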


\begin{proof}
Since $f$ is convex, its graph is above the tangent lines at the points $(a,f(a))$ and $(y^{(0)},f(y^{(0)}))$.
In particular,
\[
f(y^{(0)})\ge f(a)+f'(a)(y^{(0)}-a),\qquad 0 = f(c) \ge f(y^{(0)})+f'(y^{(0)})(c-y^{(0)}).
\]
Moreover, $f(y^{(0)})<0$ and $f'(y^{(0)})\ge f'(a)>0$.
Hence,
\[
c\le y^{(1)}
=y^{(0)}-\frac{f(y^{(0)})}{f'(y^{(0)})}
\le
y^{(0)}-\frac{f(a)+f'(a)(y^{(0)}-a)}{f'(a)}
=
a-\frac{f(a)}{f'(a)}
\le b.\qedhere
\]
\end{proof}

The following fact is well known~\cite[Theorem 2.1]{A1989}.

\begin{prop}
\label{prop:Newton_convex}
Let $f\in C^2([a,b])$.
Suppose that $(b-a)M<1$, where 
\[ M \eqdef 
\frac{\max_{t\in [a,b]}  |f''(t)|}{2\min_{t\in[a,b]}|f'(t)|}.\]
Assume that  $y^{(m)}$ is well defined and belongs to $[a,b]$ for every $m$.
Then $y^{(m)}$ converges to $c$ as $m$ tends to $\infty$, and for every $m$
\begin{equation}\label{eq:quadratic_convergence}
|y^{(m)} - c| \le  \left((b-a)M\right)^{2^{m}-1} (b-a).
\end{equation}
\end{prop}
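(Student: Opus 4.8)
The plan is to track the errors $e_m \eqdef \left|y^{(m)} - c\right|$ and establish the quadratic recursion $e_{m+1} \le M e_m^2$; estimate~\eqref{eq:quadratic_convergence} then follows by a routine induction, and convergence follows from $(b-a)M<1$.

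First I would note that the hypothesis $(b-a)M<1$ forces $\min_{t\in[a,b]}|f'(t)|>0$, so $f'$ is continuous and non-vanishing on $[a,b]$; combined with the standing assumption that every $y^{(m)}$ lies in $[a,b]$, this guarantees that the recursion~\eqref{eq:recursive_newton} is well posed. If $y^{(m)}=c$ for some $m$, the sequence is constant from that index on and~\eqref{eq:quadratic_convergence} is immediate, so I may assume $y^{(m)}\neq c$ for all $m$.

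The key step is to apply Taylor's theorem with Lagrange remainder to $f$ about the point $y^{(m)}$, evaluated at $c$: there is $\xi_m$ strictly between $y^{(m)}$ and $c$, hence $\xi_m\in[a,b]$, with
\[
0 = f(c) = f\left(y^{(m)}\right) + f'\left(y^{(m)}\right)\left(c - y^{(m)}\right) + \tfrac12 f''(\xi_m)\left(c - y^{(m)}\right)^2 .
\]
Solving for $f\left(y^{(m)}\right)$ and substituting into~\eqref{eq:recursive_newton}, the first-order terms cancel and one gets
\[
y^{(m+1)} - c = \frac{f''(\xi_m)}{2 f'\left(y^{(m)}\right)}\left(y^{(m)} - c\right)^2 .
\]
Taking absolute values and using $|f''(\xi_m)|\le\max_{[a,b]}|f''|$ together with $\left|f'\left(y^{(m)}\right)\right|\ge\min_{[a,b]}|f'|$ yields $e_{m+1}\le M e_m^2$.

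Finally I would close the induction: multiplying by $M$ gives $M e_{m+1}\le (M e_m)^2$, and since $M e_0 = M\left|y^{(0)}-c\right|\le M(b-a)<1$, induction yields $M e_m\le (M(b-a))^{2^m}$ for all $m$. Dividing by $M$ and rewriting,
\[
e_m \le \frac1M\,(M(b-a))^{2^m} = (b-a)\,((b-a)M)^{2^m-1},
\]
which is~\eqref{eq:quadratic_convergence}; and because $(b-a)M<1$ the right-hand side tends to $0$, so $y^{(m)}\to c$. There is no genuine obstacle here: the only points requiring attention are the well-definedness of the iterates and the fact that the Taylor node $\xi_m$ remains in $[a,b]$, and both are supplied directly by the hypotheses.
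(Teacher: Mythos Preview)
Your proof is correct and follows essentially the same route as the paper: Taylor's formula about $y^{(m)}$ gives the quadratic recursion $e_{m+1}\le M e_m^2$, and then induction yields~\eqref{eq:quadratic_convergence}. The paper presents only an ``idea of the proof'' and leaves the induction implicit, whereas you spell out the step $M e_{m+1}\le (M e_m)^2$ and the base case $M e_0\le M(b-a)$ explicitly, which is a welcome addition.
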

\begin{proof}[Idea of the proof]
Let $m\ge0$. By Taylor's formula, there exists $\nu\in[c, y^{(m)}]$ such that \[0=f(c)= f(y^{(m)}) + f'(y^{(m)})\left(c-y^{(m)}\right) + \frac{1}{2}f''(\nu) \left(c-y^{(m)}\right)^2.\]
It follows easily that $ |y^{(m+1)} - c |\le  M (c-y^{(m)})^2$.
Now~\eqref{eq:quadratic_convergence} is obtained by induction. 
\end{proof}

\begin{remark}[Newton's method for concave functions]
\label{rem:Newton_convex_2}
Analogs of Propositions~\ref{prop:convergence_f_convex_y0>c} and~\ref{prop:first_eval_convex_y0<c} hold if $f$ is a concave function.
In this case, each of the following two conditions is sufficient for the convergence:
\begin{itemize}
    \item $a\le y^{(0)}\le c$, 
    \item $c< y^{(0)}\le b$ and $ b-f(b)/f'(b)\ge a$.
\end{itemize} 
Instead of repeating the corresponding proofs with obvious modifications, one can pass to the function $x\mapsto -f(-x)$.
\end{remark}

\section{Solving the main equation by Newton's method}\label{sec:solve_by_Newton}

Recall that $h_{\al,n,j}$ is defined by~\eqref{eq:h_char}.
In this section we prove that the equation $h_{\al,n,j}(x) =0$, which is equivalent to the main equation, can be solved by  Newton's method for every $n\ge3$.

Remark~\ref{rem:case_al=1/2} shows that the eigenvalues can be exactly computed if $\al=1/2$, hence this case could be omitted  in the next propositons.

\begin{prop}[linear convergence of Newton's method applied to the main equation]\label{prop:linear_conv_eigval}
For every $n\ge3$ and every $j$ be even, $1\le j\le n$ and $y_{\al,n,j}^{(0)} \in I_{n,j}$, the sequence $(y_{\al,n,j}^{(m)})_{m=0}^{\infty}$, defined by~\eqref{eq:Newton_sequence_eigval}, converges to
$\tht_{\al,n,j}$. 
The convergence is at least linear:
\begin{equation}\label{eq:sol_linear_newton} \left|y_{\al,n,j}^{(m)}-\tht_{\al,n,j}\right|\le \frac{\pi}{n}\ga_{\al,n}^{m-1},
\end{equation}
where
\begin{equation}
\ga_{\al,n} \eqdef \frac{|2\al-1|}{\al(1-\al)n+|2\al-1|}.
\end{equation}
\end{prop}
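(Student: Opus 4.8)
The statement asserts that Newton's method applied to $h_{\al,n,j}$ on $I_{n,j}$ converges for \emph{every} $n\ge 3$, with an explicit linear rate. The natural strategy is to invoke the general convex/concave Newton results of Section~\ref{sec:Newton_convex} on the interval $[a,b]=\clos(I_{n,j})$, with $f=h_{\al,n,j}$, $c=\tht_{\al,n,j}$. First I would record the basic properties of $h_{\al,n,j}$ on $\clos(I_{n,j})$: by~\eqref{eq:h_char} and Proposition~\ref{prop:eta_bound}, $h_{\al,n,j}'(x)=n-\eta_\al'(x)\ge n-\cK_1(\al)>0$ whenever $n>\cK_1(\al)$, but in fact $\eta_\al'<0$ so $h_{\al,n,j}'(x)=n-\eta_\al'(x)>n>0$ for \emph{all} $n\ge 3$; thus $h_{\al,n,j}$ is strictly increasing with a unique zero $\tht_{\al,n,j}\in I_{n,j}$ (Proposition~\ref{prop:h_change_sign}). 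Moreover, $h_{\al,n,j}''(x)=-\eta_\al''(x)$, and by~\eqref{eq:eta_der_2} the sign of $\eta_\al''$ is constant on $(0,\pi)$: $\eta_\al$ is convex for $0<\al<1/2$ and concave for $1/2<\al<1$ (the case $\al=1/2$ being exactly solvable, Remark~\ref{rem:case_al=1/2}). Hence $h_{\al,n,j}$ is concave for $0<\al<1/2$ and convex for $1/2<\al<1$.

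\textbf{Handling the two sides of the root.} Consider first $1/2<\al<1$, so $h_{\al,n,j}$ is convex on $[a,b]$. If $y_{\al,n,j}^{(0)}\ge\tht_{\al,n,j}$ we are on the ``good'' side and Proposition~\ref{prop:convergence_f_convex_y0>c} applies directly. If $y_{\al,n,j}^{(0)}<\tht_{\al,n,j}$, I must check the hypothesis~\eqref{eq:wrong_side_condition} of Proposition~\ref{prop:first_eval_convex_y0<c}, namely $a-h_{\al,n,j}(a)/h_{\al,n,j}'(a)\le b$, with $a=(j-1)\pi/n$ and $b=j\pi/n$. Since $h_{\al,n,j}(a)=-\eta_\al(a)\in[-\pi,0]$ and $h_{\al,n,j}'(a)=n-\eta_\al'(a)>n$, we get $-h_{\al,n,j}(a)/h_{\al,n,j}'(a)=\eta_\al(a)/(n-\eta_\al'(a))<\pi/n=b-a$, which is precisely~\eqref{eq:wrong_side_condition}; so $y_{\al,n,j}^{(1)}\in[\tht_{\al,n,j},b]$ and Proposition~\ref{prop:convergence_f_convex_y0>c} applies to $(y_{\al,n,j}^{(m)})_{m\ge1}$. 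For $0<\al<1/2$, $h_{\al,n,j}$ is concave, and the symmetric statements from Remark~\ref{rem:Newton_convex_2} are used in the same way, with the analogous bound $b-h_{\al,n,j}(b)/h_{\al,n,j}'(b)\ge a$ following from $h_{\al,n,j}(b)=\pi-\eta_\al(b)\in[0,\pi]$ and $h_{\al,n,j}'(b)>n$. In all cases the sequence converges to $\tht_{\al,n,j}$.

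\textbf{The explicit rate.} For the quantitative bound~\eqref{eq:sol_linear_newton}, I apply the contraction estimate from Proposition~\ref{prop:convergence_f_convex_y0>c} (or its concave analog): from the second iterate on, $|y_{\al,n,j}^{(m)}-\tht_{\al,n,j}|\le(b-a)\,\bigl(1-h_{\al,n,j}'(a)/h_{\al,n,j}'(b)\bigr)^{m-1}$ in the convex case (with $a,b$ swapped in the concave case), and $b-a=\pi/n$. It remains to bound the ratio $1-h_{\al,n,j}'(a)/h_{\al,n,j}'(b)$ by $\ga_{\al,n}$. Writing $h_{\al,n,j}'=n-\eta_\al'$ and using that $\eta_\al'$ is monotone on $[0,\pi]$ with endpoint values $-\ka_\al$ and $-1/\ka_\al$ (from~\eqref{eq:eta_der_version_2}), the extreme values of $h_{\al,n,j}'$ over $\clos(I_{n,j})$ are $n+\ka_\al$ and $n+1/\ka_\al$; hence $1-\min h'/\max h' = (\max h'-\min h')/\max h' = |\ka_\al-1/\ka_\al|/(n+\max\{\ka_\al,1/\ka_\al\})$. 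Substituting $\ka_\al=\al/(1-\al)$ gives $|\ka_\al-1/\ka_\al| = |2\al-1|/(\al(1-\al))$ and $\max\{\ka_\al,1/\ka_\al\}=\cK_1(\al)$, and after clearing denominators this is exactly $\ga_{\al,n}=|2\al-1|/(\al(1-\al)n+|2\al-1|\cdot(1-\al)/\cdots)$ — here I would be careful to track whether the $\cK_1(\al)$ term in the denominator should be dropped to obtain the stated cleaner bound $|2\al-1|/(\al(1-\al)n+|2\al-1|)$, using $\cK_1(\al)\le |2\al-1|/(\al(1-\al))$ only when that makes the bound weaker (i.e.\ larger), which it does. \textbf{The main obstacle} is precisely this last bookkeeping: verifying that the crude replacement of the true ratio by $\ga_{\al,n}$ goes in the correct (safe, upper-bound) direction for \emph{all} $0<\al<1$, $\al\neq1/2$, and confirming the ``wrong-side'' hypotheses~\eqref{eq:wrong_side_condition} hold with room to spare for every $n\ge 3$ rather than only for $n$ large; the convexity/concavity dichotomy and the sign of $\eta_\al'$ do all the conceptual work, so everything else is careful but routine estimation.
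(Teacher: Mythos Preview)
Your approach is essentially identical to the paper's: split into the convex case $1/2<\al<1$ and the concave case $0<\al<1/2$, invoke Propositions~\ref{prop:convergence_f_convex_y0>c} and~\ref{prop:first_eval_convex_y0<c} (and Remark~\ref{rem:Newton_convex_2}), verify the ``wrong-side'' condition via $0\le\eta_\al\le\pi$ and $h'_{\al,n,j}>n$, and bound the contraction ratio using the global extremes $\eta_\al'(0)=-1/\ka_\al$, $\eta_\al'(\pi)=-\ka_\al$.

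One small correction in your final bookkeeping: the inequality you propose, $\cK_1(\al)\le |2\al-1|/(\al(1-\al))=|\ka_\al-1/\ka_\al|$, is actually reversed (for $\ka_\al>1$ one has $\ka_\al-1/\ka_\al<\ka_\al=\cK_1(\al)$). What you really need is simpler. After substituting $\ka_\al=\al/(1-\al)$, your ratio $|\ka_\al-1/\ka_\al|/(n+\cK_1(\al))$ becomes
\[
\frac{|2\al-1|}{\al(1-\al)n+\al^2}\quad(\al>1/2),
\qquad
\frac{|2\al-1|}{\al(1-\al)n+(1-\al)^2}\quad(\al<1/2),
\]
and in both cases the extra term in the denominator dominates $|2\al-1|$ because $\al^2-(2\al-1)=(\al-1)^2\ge0$ and $(1-\al)^2-(1-2\al)=\al^2\ge0$. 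This is exactly how the paper finishes. With that fix, your argument is complete and matches the paper's proof.
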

\begin{proof}
We start with the case $1/2\le \al \le 1$. 
By the proof of Proposition~\ref{prop:eta_bound},
$\eta_\al$ is analytic in $\clos(I_{n,j}))$,
and $\eta_\al'$ decreases on $[0,\pi]$ taking values
$\eta_\al'(0) = -\ka_\al^{-1}$ to
$\eta_\al'(\pi) = -\ka_\al$.
Therefore, $h_{\al,n,j}$ is analytic and convex on $[0,\pi]$, and
\[
1- \frac{n-\eta_\al'\left(\frac{(j-1)\pi}{n}\right)}{n-\eta_\al'\left(\frac{j\pi}{n}\right)} \le 1 - \frac{n-\eta_\al'(0)}{n-\eta_\al'(\pi)}
= \frac{2\al-1}{\al(1-\al)n + \al^2}\le \frac{2\al-1}{\al(1-\al)n + 2\al-1}.
\]
If $y_{\al,n,j}^{(0)}\ge\tht_{\al,n,j}$,
then Proposition~\ref{prop:convergence_f_convex_y0>c}
yields the convergence and~\eqref{eq:sol_linear_newton}.

For $y_{\al,n,j}^{(0)}<\tht_{\al,n,j}$, we have to verify the condition~\eqref{eq:wrong_side_condition} from Proposition~\ref{prop:first_eval_convex_y0<c}.
In efect, 
\begin{equation*}\label{eq:badside_al_1}
\frac{(j-1)\pi}{n} - \frac{h_{\al,n,j}\left(\frac{(j-1)\pi}{n}\right)}{h_{\al,n,j}'\left(\frac{(j-1)\pi}{n}\right)}
= \frac{(j-1)\pi}{n} + \frac{\eta_{\al,n,j}\left(\frac{(j-1)\pi}{n} \right)}{n-\eta_{\al,n,j}'\left(\frac{(j-1)\pi}{n} \right)}
\le \frac{j\pi}{n}.
\end{equation*}
Since $y_{\al,n,j}^{(1)}\ge\tht_{\al,n,j}$,
after applying $m-1$ steps of the algorithm we get~\eqref{eq:sol_linear_newton}.

For $0\le\al\le1/2$, $h_{\al,n,j}$ is concave,
and the proof of the linear convergence is similar
(see Remark~\ref{rem:Newton_convex_2}).
In particular, if $y_{\al,n,j}^{(0)}>\tht_{\al,n,j}$, then
\begin{equation*}\label{eq:badside_al_2}
\frac{j\pi}{n} - \frac{h_{\al,n,j}\left(\frac{j\pi}{n}\right)}{h_{\al,n,j}'\left(\frac{j\pi}{n}\right)}
= \frac{j\pi}{n} - \frac{\pi-\eta_\al\left(\frac{j\pi}{n}\right)}{n-\eta_\al'\left(\frac{j\pi}{n}\right)}
\ge \frac{(j-1)\pi}{n}.
\end{equation*}
\end{proof}

\begin{proof}[Proof of Theorem~\ref{thm:Newton}]
The first part of Theorem follows from Proposition~\ref{prop:linear_conv_eigval}.
Now we suppose that $0<\al<1$ and $n>\sqrt{\pi\cK_2(\al)/2}$.
Since $\eta_\al'<0$ and $|\eta_\al''|$ is bounded by $\cK_2(\al)$,
\[
M_{\al,n,j}\eqdef \frac{1}{2}\sup_{0<x,y<\pi} \left|\frac{h_{\al,n,j}''(x)}{h_{\al,n,j}'(y)}\right| =\frac{1}{2n} \sup_{0<x,y<\pi}\left| \frac{\eta_{\al}''(x)}{1-\frac{\eta_\al'(y)}{n}} \right|
\le \frac{\cK_2(\al)}{2n}.
\]
Therefore,
$\frac{\pi}{n} M_{\al,n,j}
\le\frac{\pi \cK_2(\al)}{2n^2}<1$, the conditions in Proposition~\ref{prop:Newton_convex} are fulfilled, and we obtain~\eqref{eq:sol_newton_convergence_eigval}.
\end{proof}

The upper bound~\eqref{eq:sol_linear_newton} allows us to compute ``a priori'' the number of steps that will be sufficient to achieve a desired precision.
Namely, if
\begin{equation}\label{eq:bounded_of_steps_for_newton_method}
m >\frac{p + \log_2\frac{\pi}{2n}}{\log_2\frac{1}{\ga_{\al,n}}} + 1,
\end{equation}
then
$|y_{\al,n,j}^{(m)}-\tht_{\al,n,j}|<2^{-p}$.
In fact, after a few iterations, the linear convergence transforms into quadratic convergence, hence reducing the number of iterations.

\section{Asymptotic formulas for the eigenvalues}\label{sec:asymptotic_formulas}

\begin{prop}
\label{prop:theta_approximation_1}
Let $n\ge 3$ and $j$ be even with $1\le j\le n$. Then
\begin{equation}
\label{eq:theta_approximation_1}
\left|\tht_{\al,n,j}-\left(d_{n,j}+\frac{\eta_\al(d_{n,j})}{n}\right)\right|
\le \frac{\pi \cK_1(\al)}{n^2}.
\end{equation}
\end{prop}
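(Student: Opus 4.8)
The plan is to start from the fixed-point characterization of $\tht_{\al,n,j}$ provided by Theorem~\ref{thm:weak_characteristic_equation_L}: since $j$ is even, $\tht_{\al,n,j}$ is the unique solution of~\eqref{eq:main_eq} on $[0,\pi]$, so it satisfies the identity
\[
\tht_{\al,n,j}=d_{n,j}+\frac{\eta_\al(\tht_{\al,n,j})}{n}.
\]
Subtracting the quantity $d_{n,j}+\eta_\al(d_{n,j})/n$ from both sides makes the $d_{n,j}$ terms cancel, leaving
\[
\tht_{\al,n,j}-\left(d_{n,j}+\frac{\eta_\al(d_{n,j})}{n}\right)
=\frac{\eta_\al(\tht_{\al,n,j})-\eta_\al(d_{n,j})}{n}.
\]
So the problem reduces to bounding the increment of $\eta_\al$ between $d_{n,j}$ and $\tht_{\al,n,j}$.

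Next I would estimate that increment by the mean value theorem together with the derivative bound from Proposition~\ref{prop:eta_bound}, which gives $\sup_{0<x<\pi}|\eta_\al'(x)|=\cK_1(\al)$; hence
\[
\bigl|\eta_\al(\tht_{\al,n,j})-\eta_\al(d_{n,j})\bigr|\le \cK_1(\al)\,\bigl|\tht_{\al,n,j}-d_{n,j}\bigr|.
\]
Then I would invoke the localization already established in Theorem~\ref{thm:Localization_weak_eigenvals} (equivalently, the fact proved inside the proof of Theorem~\ref{thm:weak_characteristic_equation_L} that $\tht_{\al,n,j}\in I_{n,j}$): this yields $d_{n,j}<\tht_{\al,n,j}<d_{n,j+1}$, so $|\tht_{\al,n,j}-d_{n,j}|<d_{n,j+1}-d_{n,j}=\pi/n$. (Alternatively, one gets the same bound directly from $\tht_{\al,n,j}-d_{n,j}=\eta_\al(\tht_{\al,n,j})/n\le\pi/n$ since $\eta_\al$ takes values in $[0,\pi]$.)

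Combining the three displays gives
\[
\left|\tht_{\al,n,j}-\left(d_{n,j}+\frac{\eta_\al(d_{n,j})}{n}\right)\right|
=\frac{\bigl|\eta_\al(\tht_{\al,n,j})-\eta_\al(d_{n,j})\bigr|}{n}
\le\frac{\cK_1(\al)}{n}\cdot\frac{\pi}{n}=\frac{\pi\cK_1(\al)}{n^2},
\]
which is exactly~\eqref{eq:theta_approximation_1}. There is no real obstacle here: the whole argument is one application of the mean value theorem to $\eta_\al$, and the only inputs are the already-proven facts that $\tht_{\al,n,j}$ solves the main equation, that $|\eta_\al'|\le\cK_1(\al)$, and that $\tht_{\al,n,j}$ lies in the interval $I_{n,j}$ of length $\pi/n$. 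If anything deserves care, it is merely to state clearly which of the earlier results supplies each of these three ingredients and to keep the restriction ``$j$ even'' in force so that the main-equation characterization applies.
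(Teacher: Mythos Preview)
Your proposal is correct and follows essentially the same approach as the paper: use the main equation to rewrite the left-hand side as $(\eta_\al(\tht_{\al,n,j})-\eta_\al(d_{n,j}))/n$, then apply the mean value theorem with the bound $|\eta_\al'|\le\cK_1(\al)$ from Proposition~\ref{prop:eta_bound} together with $|\tht_{\al,n,j}-d_{n,j}|\le\pi/n$. The paper's proof is simply a terser version of exactly this argument.
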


\begin{proof}
Theorem~\ref{thm:weak_characteristic_equation_L} assures that
$|\tht_{\al,n,j}-d_{n,j}|\le\frac{\pi}{n}$.
Hence, by the mean value theorem and formula~\eqref{eq:eta_der_bounded},
\[
\bigl|\eta_\al(\tht_{\al,n,j})-\eta_\al(d_{n,j})\bigr|
\le
\|\eta_\al'\|_\infty\,
\left|\tht_{\al,n,j}-d_{n,j}\right|
\le \frac{\pi\cK_1(\al)}{n}.
\]
Using~\eqref{eq:main_eq} we obtain~\eqref{eq:theta_approximation_1}.
\end{proof}

\begin{prop}
\label{prop:weak_theta_asympt}
There exists $C_1(\al)>0$ such that
for every $n\ge3$ and every $j$ even with $1\le j\le n$,
\begin{equation}\label{eq:weak_theta_asympt}
\tht_{\al,n,j}
=d_{n,j}
+\frac{\eta_\al(d_{n,j})}{n}
+\frac{\eta_\al(d_{n,j})\eta_\al'(d_{n,j})}{n^2}
+r_{\al,n,j},
\end{equation}
where $|r_{\al,n,j}|\le\frac{C_1(\al)}{n^3}$.
\end{prop}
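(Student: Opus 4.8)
The plan is to bootstrap from the first-order estimate of Proposition~\ref{prop:theta_approximation_1}. Write $d\eqdef d_{n,j}$ and $\tht\eqdef\tht_{\al,n,j}$ for brevity. Theorem~\ref{thm:weak_characteristic_equation_L} supplies the exact identity $\tht=d+\eta_\al(\tht)/n$ together with the localization $|\tht-d|\le\pi/n$, while Proposition~\ref{prop:theta_approximation_1} gives $\tht=d+\eta_\al(d)/n+\rho$ with $|\rho|\le\pi\cK_1(\al)/n^2$. The idea is to expand $\eta_\al(\tht)$ about $d$ to second order, insert the already known size of $\tht-d$, and substitute the improved expression back into the main equation.

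First I would apply Taylor's theorem with Lagrange remainder. Since $j$ is even and $j\le n$, we have $d\ge\pi/n>0$ and $\tht\in I_{n,j}\subset(0,\pi)$, so the segment joining $d$ and $\tht$ lies in $(0,\pi)$, where $\eta_\al$ is $C^2$ (indeed smooth, as established in the proof of Proposition~\ref{prop:eta_bound}). Hence there is $\xi$ between $d$ and $\tht$ with
\[
\eta_\al(\tht)=\eta_\al(d)+\eta_\al'(d)(\tht-d)+\tfrac12\eta_\al''(\xi)(\tht-d)^2 .
\]
Replacing $\tht-d$ by $\eta_\al(d)/n+\rho$ in the linear term and using the bounds $\|\eta_\al'\|_\infty=\cK_1(\al)$, $\|\eta_\al''\|_\infty\le\cK_2(\al)$ from Proposition~\ref{prop:eta_bound}, together with $|\eta_\al(d)|\le\pi$, $|\rho|\le\pi\cK_1(\al)/n^2$ and $|\tht-d|\le\pi/n$, I obtain
\[
\eta_\al(\tht)=\eta_\al(d)+\frac{\eta_\al(d)\,\eta_\al'(d)}{n}+s_{\al,n,j},
\qquad
|s_{\al,n,j}|\le\frac{\pi\cK_1(\al)^2}{n^2}+\frac{\pi^2\cK_2(\al)}{2n^2}.
\]

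Finally I would divide by $n$ and add $d$: the identity $\tht=d+\eta_\al(\tht)/n$ becomes precisely~\eqref{eq:weak_theta_asympt} with $r_{\al,n,j}=s_{\al,n,j}/n$, so one may take $C_1(\al)=\pi\cK_1(\al)^2+\tfrac12\pi^2\cK_2(\al)$. Since $\cK_1(\al)$, $\cK_2(\al)$ and $\pi$ do not depend on $j$ or $n$, the error bound is uniform in $j$, as required, and it is valid for all $n\ge3$ because the inputs (Theorem~\ref{thm:weak_characteristic_equation_L} and Proposition~\ref{prop:theta_approximation_1}) are. There is no genuine obstacle: the statement is just a second iteration of the fixed-point relation, the only mild bookkeeping point being to keep the Taylor node $\xi$ inside $(0,\pi)$. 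One more iteration of the same kind, combined with a Taylor expansion of $g$ about $d_{n,j}$, leads to the eigenvalue asymptotics of Theorem~\ref{thm:weak_asympt_weak}.
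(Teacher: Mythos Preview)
Your argument is correct and follows essentially the same route as the paper: both iterate the main equation $\tht=d+\eta_\al(\tht)/n$, Taylor-expand $\eta_\al$ about $d_{n,j}$ to first order with quadratic remainder, feed in the $O_\al(1/n^2)$ estimate from Proposition~\ref{prop:theta_approximation_1}, and control the error via Proposition~\ref{prop:eta_bound}. The only difference is cosmetic: the paper phrases it as substituting the first-order approximation into the argument of $\eta_\al$, whereas you keep $\tht$ as the argument and track $\tht-d$ explicitly; your version has the small bonus of producing an explicit value for $C_1(\al)$.
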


\begin{proof}
Proposition~\ref{prop:theta_approximation_1} implies that
\[
\tht_{\al,n,j}
=d_{n,j}+\frac{\eta_\al(d_{n,j})}{n}
+O_\al\left(\frac{1}{n^2}\right).
\]
Substitute this expression into the right-hand side of~\eqref{eq:main_eq}:
\[ 
\tht_{\al,n,j}
=d_{n,j}+\frac{\eta_\al\left(d_{n,j}+\frac{\eta_\al(d_{n,j})}{n}
+O_\al\left(\frac{1}{n^2}\right)\right)}{n}.
\]
Expanding $\eta_\al$
by Taylor's formula around $d_{n,j}$
with two exact term
and estimating the residue term with Proposition~\ref{prop:eta_bound}
we obtain the desired result.
\end{proof}

\begin{proof}[Proof of Theorem~\ref{thm:weak_asympt_weak}]
This theorem follows from Proposition~\ref{prop:weak_theta_asympt}:
we just evaluate $g$ at the expression~\eqref{eq:weak_theta_asympt}
and expand it by Taylor's formula around $d_{n,j}$.
\end{proof}

In a similar manner, iterating in the main equation~\eqref{eq:main_eq}, we could obtain asymptotic expansions with more terms;
see~\cite[(3.9)]{BBGM2018} for the asymptotic expansions up to $n^{-5}$.

There are other forms of the asymptotic expansions for $\la_{\al,n,j}$.
Adding $x$ to both sides of the equation $nx = (j-1)\pi + \eta_\al(x)$ and dividing it over $n+1$, we arrive at the following equivalent form of the main equation:
\begin{equation}\label{eq:main_eq_2}
x =  \frac{j\pi + \widetilde{\eta}_\al(x)}{n+1},\quad\text{where}\quad  \widetilde{\eta}_\al(x) \eqdef \eta_\al(x)+x-\pi
=2\arctan\frac{(\ka_\al-1)\cot\frac{x}{2}}{1+\ka_\al \cot^2\frac{x}{2}}.
\end{equation}
After that, similarly to Proposition~\ref{prop:weak_theta_asympt} and Theorem~\ref{thm:weak_asympt_weak}, we obtain the next result.

\begin{prop}
\label{prop:weak_asympt_2}
There exist $C_2(\al)>0$ and $C_3(\al)>0$ such that
for every $n\ge3$ and every $j$ even with $1\le j\le n$,
\begin{equation}\label{eq:tht_al_den}
    \tht_{\al,n,j} =  \frac{j\pi}{n+1}
+\frac{\widetilde{\eta}_\al\left(\frac{j\pi}{n+1}\right)}{n+1}
+\frac{\widetilde{\eta}_\al\left(\frac{j\pi}{n+1}\right)\widetilde{\eta}_\al'\left(\frac{j\pi}{n+1}\right)}{(n+1)^2} + \widetilde{r}_{\al,n,j},
\end{equation}
\begin{equation}
\label{eq:la_asympt_exp_new}
\begin{aligned}
\la_{\al,n,j}
&=g\left(\frac{j\pi}{n+1}\right)
+\frac{g'\left(\frac{j\pi}{n+1}\right)
\widetilde{\eta}_\al\left(\frac{j\pi}{n+1}\right)}{n+1}
\\
&\qquad+\frac{g'\left(\frac{j\pi}{n+1}\right)
\widetilde{\eta}_\al\left(\frac{j\pi}{n+1}\right)\widetilde{\eta}_\al'\left(\frac{j\pi}{n+1}\right)
+\frac{1}{2}g''\left(\frac{j\pi}{n+1}\right)
\widetilde{\eta}_\al\left(\frac{j\pi}{n+1}\right)^2}{(n+1)^2}
+\widetilde{R}_{\al,n,j},
\end{aligned}
\end{equation}
where $|\widetilde{r}_{\al,n,j}|\le\frac{C_2(\al)}{n^3}$
and $|\widetilde{R}_{\al,n,j}|\le\frac{C_3(\al)}{n^3}$.
\end{prop}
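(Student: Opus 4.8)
The plan is to transcribe the proofs of Proposition~\ref{prop:theta_approximation_1}, Proposition~\ref{prop:weak_theta_asympt} and Theorem~\ref{thm:weak_asympt_weak}, replacing the main equation~\eqref{eq:main_eq} by its equivalent form~\eqref{eq:main_eq_2}. The only preparation needed is that $\widetilde{\eta}_\al$ and each of its derivatives are bounded on $(0,\pi)$ by a constant depending only on $\al$. This is immediate from Proposition~\ref{prop:eta_bound}: since $\widetilde{\eta}_\al(x)=\eta_\al(x)+x-\pi$, we have $\widetilde{\eta}_\al'=\eta_\al'+1$ and $\widetilde{\eta}_\al^{(k)}=\eta_\al^{(k)}$ for $k\ge2$, while $\widetilde{\eta}_\al$ itself is continuous on the compact interval $[0,\pi]$ (with $\widetilde{\eta}_\al(0)=\widetilde{\eta}_\al(\pi)=0$). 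Fix a constant $C_0(\al)$ dominating $|\widetilde{\eta}_\al|$, $|\widetilde{\eta}_\al'|$, $|\widetilde{\eta}_\al''|$ and $|\widetilde{\eta}_\al'''|$ on $(0,\pi)$; if one prefers an explicit form of $\widetilde{\eta}_\al$, the closed expression in~\eqref{eq:main_eq_2} follows from~\eqref{eq:eta1} by a direct trigonometric computation.

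First I would record the elementary inclusion $\frac{j\pi}{n+1}\in I_{n,j}$, valid precisely because $1\le j\le n$: indeed $\frac{j\pi}{n+1}<\frac{j\pi}{n}$ always, and $\frac{(j-1)\pi}{n}<\frac{j\pi}{n+1}$ is equivalent to $j\le n$. Since Theorem~\ref{thm:weak_characteristic_equation_L} also places $\tht_{\al,n,j}$ in $I_{n,j}$, this gives $\bigl|\tht_{\al,n,j}-\frac{j\pi}{n+1}\bigr|<\frac{\pi}{n}$. Substituting this into~\eqref{eq:main_eq_2} and using the mean value theorem together with the bound on $\widetilde{\eta}_\al'$ yields the analogue of Proposition~\ref{prop:theta_approximation_1}, namely $\tht_{\al,n,j}=\frac{j\pi}{n+1}+\frac{\widetilde{\eta}_\al(j\pi/(n+1))}{n+1}+O_\al(1/n^2)$.

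Then, exactly as in Proposition~\ref{prop:weak_theta_asympt}, I substitute this improved approximation into the right-hand side of~\eqref{eq:main_eq_2} once more and expand $\widetilde{\eta}_\al$ by Taylor's formula about $\frac{j\pi}{n+1}$ with two exact terms, bounding the quadratic remainder by $C_0(\al)$; the $O_\al(1/n^2)$ displacement of the argument only affects the $1/n^3$ level after dividing by $n+1$. This produces~\eqref{eq:tht_al_den} with $|\widetilde{r}_{\al,n,j}|\le C_2(\al)/n^3$. Finally, to obtain~\eqref{eq:la_asympt_exp_new} I evaluate $g$ at the right-hand side of~\eqref{eq:tht_al_den} and expand $g$ by Taylor's formula about $\frac{j\pi}{n+1}$ through second order — the cubic remainder is $O_\al(1/n^3)$ since the displacement $\tht_{\al,n,j}-\frac{j\pi}{n+1}$ is $O_\al(1/n)$ and $g\in C^\infty([0,\pi])$ — and collect the coefficients of $1/(n+1)$ and $1/(n+1)^2$; they come out to $g'\widetilde{\eta}_\al$ and $g'\widetilde{\eta}_\al\widetilde{\eta}_\al'+\tfrac12 g''\widetilde{\eta}_\al^2$ evaluated at $\frac{j\pi}{n+1}$, which is the claimed formula, with $|\widetilde{R}_{\al,n,j}|\le C_3(\al)/n^3$.

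I do not expect any genuine obstacle: the argument is a routine adaptation of the one for $d_{n,j}$. The only points deserving attention are the inclusion $\frac{j\pi}{n+1}\in I_{n,j}$ (which is what makes the first-order bound $\pi/n$ available for every admissible even $j$) and the uniformity of the error terms in $j$ — but that is automatic, since every constant entering the estimates ($C_0(\al)$ for the derivatives of $\widetilde{\eta}_\al$, and the sup-norms of $g'$, $g''$, $g'''$ on $[0,\pi]$) depends on $\al$ alone and not on $j$ or $n$.
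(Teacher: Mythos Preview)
Your proposal is correct and follows essentially the same route as the paper, which simply states that the result is obtained ``similarly to Proposition~\ref{prop:weak_theta_asympt} and Theorem~\ref{thm:weak_asympt_weak}'' after passing to the equivalent equation~\eqref{eq:main_eq_2}. You have in fact supplied more detail than the paper does, in particular the verification that $\frac{j\pi}{n+1}\in I_{n,j}$ and the observation that the bounds on the derivatives of $\widetilde{\eta}_\al$ follow immediately from Proposition~\ref{prop:eta_bound} via $\widetilde{\eta}_\al(x)=\eta_\al(x)+x-\pi$.
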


Numerical experiments show that~\eqref{eq:la_asympt_exp_new} is more precise than~\eqref{eq:laasympt_w}, especially for $\al$ close to $1/2$,
but the errors are almost the same for $\al$ close to $1$.
Moreover, $\widetilde{\eta}_\al$ is more complicated than $\eta_\al$
($\widetilde{\eta}_\al$ has two intervals of monotonicity), and the denominator $1/n$ naturally appears in the formula~\eqref{eq:localization_eigvals_weak_odd} for $\la_{\al,n,j}$ with odd $j$.

In the incoming proposition we obtain a simplified asymptotic formula for the eigenvalues $\la_{\al,n,j}$
as $j/n$ tends to zero.

\begin{prop}\label{prop:first_eigenvalues}
Let $\al$ be a fixed number in $(0,1)$.
Then $\la_{\al,n,j}$ has the following asymptotic expansion as $j/n$ tends to $0$:
\begin{equation}
\label{eq:lambda_first_asympt}
    \la_{\al,n,j}
    = \frac{j^2\pi^2}{n^2} - \frac{2j^2(1-\al)\pi^2}{\al n^3} + O_\al\left(\frac{j^4}{n^4}\right).
\end{equation}
\end{prop}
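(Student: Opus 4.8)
The plan is to start from the main equation~\eqref{eq:main_eq} in the form $\tht_{\al,n,j} = d_{n,j} + \eta_\al(\tht_{\al,n,j})/n$ and analyze what happens when $j$ is small and $n$ is large, so that $\tht_{\al,n,j}$ itself is small. In that regime I would use the small-argument expansion of $\eta_\al$: from~\eqref{eq:eta1}, $\eta_\al(x) = \pi - 2\arctan((1-\al)x/(2\al)) + O(x^3)$, hence $\eta_\al(x) = \pi - \frac{1-\al}{\al}x + O_\al(x^3)$ as $x\to0^+$. Plugging this into the main equation gives $n\,\tht_{\al,n,j} = j\pi - \frac{1-\al}{\al}\tht_{\al,n,j} + O_\al(\tht_{\al,n,j}^3)$, which I would solve for $\tht_{\al,n,j}$ by treating it as a fixed-point/bootstrap problem: the leading term is $\tht_{\al,n,j}\approx j\pi/n$, then feeding this back produces the correction $-\frac{1-\al}{\al}\cdot\frac{j\pi}{n^2}$ at the next order, etc. This yields an expansion of $\tht_{\al,n,j}$ in powers of $j/n$ with $\al$-dependent coefficients, uniformly controlled because all the $O_\al$ terms come from the analyticity of $\eta_\al$ near $0$ established in Proposition~\ref{prop:eta_bound}.

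Concretely, I expect to get
\[
\tht_{\al,n,j} = \frac{j\pi}{n} - \frac{(1-\al)j\pi}{\al n^2} + O_\al\!\left(\frac{j^3}{n^3} + \frac{j}{n^3}\right),
\]
where the first error term is from the cubic remainder in $\eta_\al$ and the second from iterating the linear correction once more; since $j\ge2$ the $j/n^3$ term is absorbed into $j^3/n^3$ if one is slightly generous, but in any case it is $O_\al(j^3/n^3)$ when $j\gtrsim 1$. Then the eigenvalue is $\la_{\al,n,j} = g(\tht_{\al,n,j})$, and I would expand $g$ by Taylor's formula around $0$: $g(x) = x^2 - \frac{x^4}{12} + O(x^6)$ (from $g(x) = 4\sin^2(x/2)$). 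Substituting the expansion of $\tht_{\al,n,j}$:
\[
\la_{\al,n,j} = \left(\frac{j\pi}{n}\right)^2 - 2\cdot\frac{j\pi}{n}\cdot\frac{(1-\al)j\pi}{\al n^2} + O_\al\!\left(\frac{j^4}{n^4}\right) = \frac{j^2\pi^2}{n^2} - \frac{2(1-\al)j^2\pi^2}{\al n^3} + O_\al\!\left(\frac{j^4}{n^4}\right),
\]
which is exactly~\eqref{eq:lambda_first_asympt}. The quartic term $-x^4/12$ in $g$ contributes only at order $j^4/n^4$ and is swept into the error; likewise the square of the second-order term in $\tht_{\al,n,j}$ is $O_\al(j^4/n^4)$.

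The main obstacle, and the place where care is needed, is making the error term genuinely uniform and honestly of the stated order $O_\al(j^4/n^4)$ rather than something like $O_\al(j^2/n^4 + j^4/n^4)$. One has to track that every correction beyond the displayed two terms carries at least two extra powers of $j/n$ relative to the previous one — this is true because each iteration in the bootstrap either picks up a factor $\eta_\al'(d_{n,j})/n = O_\al(1/n)$ paired with the existing $j/n$ (giving $j/n^2$ per step, so after the second step we are at $j/n^3$, which for $j\ge 1$ is $\le j^4/n^4$ only if $n^{-1}\le j^3$, i.e.\ for all $j\ge1$ once... wait, that needs $j\ge1$ which holds) — so a clean way is to first prove $\tht_{\al,n,j} = j\pi/n + O_\al(j/n^2)$ (immediate from Proposition~\ref{prop:theta_approximation_1} together with $\eta_\al(d_{n,j}) = \pi + O_\al(j/n)$), then feed this sharper-than-needed estimate once more through the main equation to extract the $n^{-3}$ coefficient and bound the remainder by $O_\al(j^3/n^3)$ using the Lipschitz bound on $\eta_\al'$ over the small interval $[0, Cj/n]$ where $\eta_\al''$ is bounded. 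Since $j^3/n^3 = (j/n)\cdot (j^2/n^2) \le j^2/n^2$ and more precisely $j^3/n^3 \le j^4/n^4 \cdot (n/j)$... the cleanest bookkeeping is: $\tht_{\al,n,j} = O_\al(j/n)$, so $j^3/n^3 = O_\al(j^4/n^4 \cdot n/j)$ is \emph{not} automatically $O(j^4/n^4)$; instead one notes the error in $\la_{\al,n,j}$ coming from the $\tht$-remainder is $g'(\tht)\cdot r = O(\tht)\cdot O_\al(j^3/n^3) = O_\al(j^4/n^4)$ because $g'(x) = O(x)$ near $0$. That extra factor of $\tht = O(j/n)$ from $g'$ vanishing at the origin is precisely what upgrades $j^3/n^3$ to $j^4/n^4$, and getting this interplay right is the crux of the argument.
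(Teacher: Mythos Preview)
Your proposal is correct and follows essentially the same route as the paper: expand $\eta_\al$ at $0$ as $\eta_\al(x)=\pi-\frac{1-\al}{\al}x+O_\al(x^3)$, feed this into the main equation to get $\tht_{\al,n,j}=\frac{j\pi}{n}-\frac{(1-\al)j\pi}{\al n^2}+O_\al\!\bigl(\frac{j}{n^3}\bigr)+O_\al\!\bigl(\frac{j^3}{n^4}\bigr)$, and then substitute into $g(x)=x^2+O(x^4)$. The paper shortens your bootstrap by simply citing Proposition~\ref{prop:weak_theta_asympt} (plus the Maclaurin expansion of $\eta_\al'$) to obtain the $\tht$-expansion in one line, whereas you re-derive that expansion by hand; your key observation---that $g'(x)=O(x)$ near $0$ is what promotes the $\tht$-remainder from $O_\al(j^3/n^3)$ to $O_\al(j^4/n^4)$ in $\la$---is exactly the mechanism at work in both arguments, though the paper leaves it implicit.
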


\begin{proof}
First, we use the following Maclaurin's expansions of $\eta_\al$ and $\eta_\al'$:
\[
\eta_\al(x)=\pi-\frac{1-\al}{\al}x+O_\al(x^3),\qquad
\eta_\al'(x)=-\frac{1-\al}{\al}+O_\al(x^2).
\]
Hence, by Proposition~\ref{prop:weak_theta_asympt},
\[
\tht_{\al,n,j}
=\frac{j\pi}{n}-\frac{(1-\al)j\pi}{\al n^2}
+O_\al\left(\frac{j}{n^3}\right)
+O_\al\left(\frac{j^3}{n^4}\right).
\]
We substitute this expansion into 
$g(x)=x^2+O(x^4)$ and obtain~\eqref{eq:lambda_first_asympt}.
\end{proof}

In particular,~\eqref{eq:lambda_first_asympt} can be applied when $j$ is fixed and $n$ tends to $\infty$.
In this situation,~\eqref{eq:lambda_first_asympt} provides a better error estimate than the asymptotic formula in Theorem~\ref{thm:weak_asympt_weak}.

\clearpage

\section{Norms of the eigenvectors}
\label{sec:eigvec_norm}

In this section we prove Theorem~\ref{thm:norm_eigvec} about the eigenvectors of $L_{\al,n,j}$.
We suppose that $\al\in\bC$, $0<\Re(\al)<1$.
Formula~\eqref{eq:eivec_w} follows from Proposition~\ref{prop:eigvec_tri_Toep_corner_per}.
We divide the rest of the proof into three lemmas.
Lemmas~\ref{lem:exact_norm_eigenvector_odd} and~\ref{lem:exact_norm_eigenvector} provide exact formulas~\eqref{eq:exact_norm_eigenvector_odd} and~\eqref{eq:exact_norm_eigenvector_even} for $\|v_{\al,n,j}\|^2$, where $j$ is odd ($j\ge3$) and even, respectively.
In Lemma~\ref{lem:second_term_norm_eigenvector} we prove that for every fixed $\al$ and $j$ even, the second term of~\eqref{eq:exact_norm_eigenvector_even} (which does not contain the factor $n$) is uniformly bounded with respect to $n$ and $j$.

In this section we use the following elementary trigonometric identity:
\begin{equation}\label{eq:sum_cosines}
    \sum_{k=1}^n \cos(2kx+y) = \frac{\sin(nx)\cos((n+1)x+y)}{\sin x}.
\end{equation}

Recall that $v_{\al,n,j}$ is the vector with components~\eqref{eq:eivec_w}.

\begin{lem}
\label{lem:exact_norm_eigenvector_odd}
Let $n\ge 3$ and $j$ be odd, $3\le j\le n$. Then
\begin{equation}
\label{eq:exact_norm_eigenvector_odd}
     \|v_{\al,n,j}\|_2 =   |1-\al|\sqrt{\frac{n}{2} \la_{\al,n,j}}.
\end{equation}
\end{lem}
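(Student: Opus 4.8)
The plan is to compute $\|v_{\al,n,j}\|_2^2 = \sum_{k=1}^n v_{\al,n,j,k}^2$ directly, using the explicit formula~\eqref{eq:eivec_w} for the components, and then simplify dramatically by exploiting the characteristic equation~\eqref{eq:tangent_equality_L} satisfied by $\tht_{\al,n,j}$. Write $\tht = \tht_{\al,n,j}$ for brevity. Since $j$ is odd, by Theorem~\ref{thm:Localization_weak_eigenvals} we have $\tht = d_{n,j} = (j-1)\pi/n$, so $n\tht = (j-1)\pi$ is an even multiple of $\pi$; hence $\sin(n\tht) = 0$, $\cos(n\tht) = 1$, and more generally $\sin((n-k)\tht) = -\sin(k\tht)$ and $\cos((n-k)\tht) = \cos(k\tht)$. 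Substituting $\sin((n-k)\tht) = -\sin(k\tht)$ into~\eqref{eq:eivec_w} collapses the third term into the first, giving the simpler expression
\[
v_{\al,n,j,k} = (1-\overline{\al})\bigl(\sin(k\tht) - \sin((k-1)\tht)\bigr).
\]

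First I would establish this simplification, then take absolute squares: $|v_{\al,n,j,k}|^2 = |1-\al|^2\,\bigl(\sin(k\tht)-\sin((k-1)\tht)\bigr)^2$. So $\|v_{\al,n,j}\|_2^2 = |1-\al|^2 \sum_{k=1}^n \bigl(\sin(k\tht)-\sin((k-1)\tht)\bigr)^2$. Next I would expand the square and evaluate the three resulting sums. Using the product-to-sum and the telescoping/closed-form identity~\eqref{eq:sum_cosines} (with appropriate choices of the parameters $x,y$), together with $\sin(n\tht)=0$, one finds $\sum_{k=1}^n \sin^2(k\tht) = n/2$, $\sum_{k=1}^n \sin^2((k-1)\tht) = n/2$, and the cross term $\sum_{k=1}^n \sin(k\tht)\sin((k-1)\tht) = \tfrac12\sum_{k=1}^n\bigl(\cos\tht - \cos((2k-1)\tht)\bigr) = \tfrac{n}{2}\cos\tht$, again because the sum of $\cos((2k-1)\tht)$ vanishes by~\eqref{eq:sum_cosines}. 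Therefore
\[
\sum_{k=1}^n \bigl(\sin(k\tht)-\sin((k-1)\tht)\bigr)^2 = \frac{n}{2} + \frac{n}{2} - 2\cdot\frac{n}{2}\cos\tht = n(1-\cos\tht) = \frac{n}{2}\,g(\tht),
\]
where the last equality is the definition~\eqref{eq:g_main} of $g$. Since $g(\tht) = \la_{\al,n,j}$, this yields $\|v_{\al,n,j}\|_2^2 = |1-\al|^2 \cdot \tfrac{n}{2}\la_{\al,n,j}$, and taking square roots gives~\eqref{eq:exact_norm_eigenvector_odd}.

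The computation is essentially routine once the initial simplification is in place, so the only mild obstacle is bookkeeping: making sure the edge terms ($k=1$, $k=n$, and the index shift to $k-1$) in the applications of~\eqref{eq:sum_cosines} are handled correctly, and confirming that $\sin(n\tht)=0$ really does kill every error term. One subtlety worth flagging: the formula is stated for odd $j$ with $3\le j\le n$; the case $j=1$ is excluded because it corresponds to the eigenvalue $\la_{\al,n,1}=0$ with constant eigenvector, handled separately in Theorem~\ref{thm:norm_eigvec}, and indeed for $j=1$ the right-hand side would be $0$, consistent with $v_{\al,n,1,k}$ being identically zero under formula~\eqref{eq:eivec_w}. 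I would not belabor this but mention it to keep the hypotheses clean.
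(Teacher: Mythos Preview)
Your proof is correct and follows essentially the same route as the paper: both start from $\tht_{\al,n,j}=(j-1)\pi/n$ for odd $j$, use $\sin((n-k)\tht)=-\sin(k\tht)$ to collapse~\eqref{eq:eivec_w} to $(1-\overline{\al})(\sin(k\tht)-\sin((k-1)\tht))$, and then sum via~\eqref{eq:sum_cosines} with $\sin(n\tht)=0$; the only cosmetic difference is that the paper first rewrites $\sin(k\tht)-\sin((k-1)\tht)=2\sin(\tht/2)\cos((2k-1)\tht/2)$ before squaring, whereas you expand the square directly. One small slip in your plan: the reference to the characteristic equation~\eqref{eq:tangent_equality_L} is a red herring, since that equation governs the even-$j$ solutions, not the odd-$j$ ones you actually (and correctly) handle via Theorem~\ref{thm:Localization_weak_eigenvals}.
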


\begin{proof}
By Theorem~\ref{thm:Localization_weak_eigenvals}, it follows that $\tht_{\al,n,j} = (j-1)\pi/n$ and
\begin{equation*}
    \sin(n\tht_{\al,n,j}) = 0,\qquad  \cos(n\tht_{\al,n,j}) =1,\qquad \sin((n-k)\tht_{\al,n,j}) = -\sin(k\tht_{\al,n,j}).
\end{equation*}
Hence
\begin{align*}
 v_{\al,n,j,k} &=  (1-\overline{\al}) \left(\sin(k\tht_{\al,n,j}) - \sin((k-1)\tht_{\al,n,j})\right) \\&= 2(1-\overline{\al}) \sin\frac{\tht_{\al,n,j}}{2}
    \cos\frac{(2k-1)\tht_{\al,n,j}}{2}.
\end{align*}
Therefore
\begin{equation}
\begin{aligned}
|v_{\al,n,j,k}|^2 &= 4|1-\al|^2\sin^2\frac{\tht_{\al,n,j}}{2}    \cos^2\frac{(2k-1)\tht_{\al,n,j}}{2} \\
& = g(\tht_{\al,n,j})|1-\al|^2 \left(\frac{1+\cos((2k-1)\tht_{\al,n,j})}{2}\right).
\end{aligned}
\end{equation}
Now we sum over $k$ and apply~\eqref{eq:sum_cosines}:
\[
    \|v_{\al,n,j}\|_2^2 = \frac{1}{2}g(\tht_{\al,n,j})|1-\al|^2 \left(n + \frac{\sin(2n \tht_{\al,n,j})}{2\sin \tht_{\al,n,j}} \right).
\]
This implies~\eqref{eq:exact_norm_eigenvector_odd} since $\sin(2n \tht_{\al,n,j}) = \sin(2(j-1)\pi) = 0$.
\end{proof}

For every $x\in[0,\pi]$, we define 
\begin{equation}\label{eq:xi_al}
   \begin{aligned}
      \xi_\al(x) & \eqdef
      \frac{|1-\al|^2}{2} g(x) \cos(\eta_\al(x)) + \frac{|\al|^2}{2}g(\eta_\al(x))\cos(x)\\
      &\pheq+\frac{\Re(\al)-|\al|^2}{2}\left(g(x) + g(x+\eta_\al(x)) -g(\eta_\al(x))\right) - 2|\al|^2\cos(x).
    \end{aligned}
\end{equation}

\begin{lem}
\label{lem:exact_norm_eigenvector}
Let $n\ge 3$ and $j$ be even, $2\le j\le n$. 
Then
\begin{equation}
\label{eq:exact_norm_eigenvector_even}
     \|v_{\al,n,j}\|_2^2 =  n\nu_\al(\tht_{\al,n,j}) + \frac{\sin(\eta_\al(\tht_{\al,n,j}))}{\sin(\tht_{\al,n,j})}\xi_\al(\tht_{\al,n,j}).
\end{equation}
\end{lem}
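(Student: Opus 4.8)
The plan is to compute $\|v_{\al,n,j}\|_2^2 = \sum_{k=1}^n |v_{\al,n,j,k}|^2$ directly from the explicit formula \eqref{eq:eivec_w}, writing $\tht \eqdef \tht_{\al,n,j}$ for brevity. Since $j$ is even, Theorem \ref{thm:weak_characteristic_equation_L} (or rather the tangent identity \eqref{eq:tangent_equality_L} and the definition of $\eta_\al$) gives the key relation $\tan(n\tht/2) = -\frac{1-\al}{\al}\tan(\tht/2)$, equivalently $\eta_\al(\tht) = j\pi - n\tht$, so that $n\tht = j\pi - \eta_\al(\tht)$. This lets me replace every occurrence of $n\tht$ by $-\eta_\al(\tht)$ modulo $2\pi$ (using that $j$ is even), in particular $\sin(n\tht) = -\sin(\eta_\al(\tht))$, $\cos(n\tht) = \cos(\eta_\al(\tht))$, and $\sin((n-k)\tht) = -\sin(\eta_\al(\tht) + k\tht)$ etc. First I would expand $|v_{\al,n,j,k}|^2$ as a sum of products of three sine terms (with conjugate coefficients $1$, $-(1-\al)$, $\bar\al$ on one side and $1$, $-(1-\bar\al)$, $\al$ on the other), producing a finite list of terms each of the form $c\,\sin(a_k\tht)\sin(b_k\tht)$ with $a_k, b_k$ affine in $k$.

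Next I would linearize each product $\sin(a_k\tht)\sin(b_k\tht) = \tfrac12[\cos((a_k-b_k)\tht) - \cos((a_k+b_k)\tht)]$. The differences $a_k - b_k$ are constant in $k$ (giving terms $\tfrac{n}{2}\cos(\text{const})$ after summing), while the sums $a_k + b_k$ are of the form $2k\tht + y$ for various constant phases $y$; to these I apply the summation identity \eqref{eq:sum_cosines}, which yields $\frac{\sin(n\tht/2)\,\cos((n+1)(\tht/2)\cdot 2/2 \ldots)}{\sin(\tht/2)}$ — more precisely, with $x = \tht$ in \eqref{eq:sum_cosines}, a factor $\sin(n\tht)/\sin(\tht)$ times a cosine of a phase linear in $n\tht$ and the constant. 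Then I substitute $\sin(n\tht) = -\sin(\eta_\al(\tht))$ and reduce the remaining phases $(n+1)\tht + y = n\tht + \tht + y = -\eta_\al(\tht) + \tht + y + j\pi$ using $n\tht = j\pi - \eta_\al(\tht)$. Collecting the $n$-proportional terms should reproduce $n\,\nu_\al(\tht)$ with $\nu_\al$ as in \eqref{eq:nu_al} — here I would use $g(x) = 2 - 2\cos x$ to convert cosines back to $g$ — and collecting the rest should give the single term $\frac{\sin(\eta_\al(\tht))}{\sin(\tht)}\,\xi_\al(\tht)$ with $\xi_\al$ as in \eqref{eq:xi_al}.

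The main obstacle is purely bookkeeping: the product of the three-term expressions produces nine cross-terms, each splitting into two cosines, so roughly eighteen trigonometric sums must be tracked, simplified via $n\tht = j\pi - \eta_\al(\tht)$, and then regrouped into the compact closed forms $\nu_\al$ and $\xi_\al$. Particular care is needed with: (i) the parity factor $(-1)^j = 1$ that repeatedly cleans up phases since $j$ is even; (ii) the identities $g(x+\eta_\al(x))$ versus $g(x-\eta_\al(x))$ — one must check whether the phase that appears is $\tht + \eta_\al(\tht)$ or $\tht - \eta_\al(\tht)$, and reconcile with \eqref{eq:xi_al} and \eqref{eq:nu_al} (note $\nu_\al$ uses $g(x-\eta_\al(x))$ while $\xi_\al$ uses $g(x+\eta_\al(x))$, so the splitting into the $n$-term and the remainder is what separates these); (iii) keeping the complex coefficients straight, so that $|1-\al|^2$, $|\al|^2$, $\Re(\al)$, and $\Re(\al) - |\al|^2$ land on the correct cosine factors. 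A sanity check at the end: setting $\al$ real and then specializing to $\al = 1/2$ (where $\eta_\al(x) = \pi - x$ and $\tht_{\al,n,j} = j\pi/(n+1)$ by Remark \ref{rem:case_al=1/2}) should make \eqref{eq:exact_norm_eigenvector_even} collapse to a classically known norm, and one can also verify the formula numerically as in the last Remark of Section \ref{sec:trid_toep_corner_per}.
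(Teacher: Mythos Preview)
Your overall strategy---expand $|v_{\al,n,j,k}|^2$, linearize the products of sines, apply the summation identity \eqref{eq:sum_cosines}, and then eliminate $n\tht$ via the main equation---is exactly the paper's. However, the key substitution you state is wrong: from \eqref{eq:main_eq} one has $n\tht = (j-1)\pi + \eta_\al(\tht)$, not $n\tht = j\pi - \eta_\al(\tht)$. Since $j$ is even, $j-1$ is odd, and the correct consequences are $\cos(n\tht) = -\cos(\eta_\al(\tht))$ (you have $+\cos$) and $\sin((n-k)\tht) = \sin(k\tht - \eta_\al(\tht))$ (you have $-\sin(k\tht + \eta_\al(\tht))$). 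Your identity $\sin(n\tht) = -\sin(\eta_\al(\tht))$ happens to survive by accident, but the other two do not, and every downstream phase reduction would inherit the error; your parity remark (i) is likewise off, since the relevant factor is $(-1)^{j-1}=-1$, not $(-1)^j=1$.

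Beyond this correction, the paper avoids the nine-term expansion you anticipate: it first uses $\sin((n-k)\tht)=\sin(k\tht-\eta_\al(\tht))$ together with sum-to-product identities to collapse $v_{\al,n,j,k}$ to a \emph{two}-term expression,
\[
v_{\al,n,j,k} = 2(1-\overline{\al})\sin\tfrac{\tht}{2}\cos\tfrac{(2k-1)\tht}{2} + 2\overline{\al}\cos\tfrac{\eta_\al(\tht)}{2}\sin\tfrac{2k\tht - \eta_\al(\tht)}{2},
\]
so that $|v_{\al,n,j,k}|^2$ has only three pieces to sum via \eqref{eq:sum_cosines}. This shortcut makes the regrouping into $\nu_\al$ and $\xi_\al$ far more transparent and is worth adopting in place of the eighteen-term bookkeeping you outline.
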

\begin{proof}
By Theorem~\ref{thm:weak_characteristic_equation_L}, $\tht_{\al,n,j} = (j-1)\pi/n + \eta_\al(\tht_{\al,n,j})/n$.
Then
\[
    \sin(n\tht_{\al,n,j}) = -\sin(\eta_{\al}(\tht_{\al,n,j})),\qquad  \cos(n\tht_{\al,n,j}) =-\cos(\eta_{\al}(\tht_{\al,n,j})),
\]
\[
    \sin((n-k)\tht_{\al,n,j}) = \sin(k\tht_{\al,n,j} - \eta_\al(\tht_{\al,n,j})).
\]
So, \eqref{eq:eivec_w} transforms into
\begin{align*}
v_{\al,n,j,k} &= 2(1-\overline{\al}) \sin\frac{\tht_{\al,n,j}}{2} \cos\frac{(2k-1)\tht_{\al,n,j}}{2} \\& \pheq+ 2\overline{\al} \cos\frac{\eta_\al(\tht_{\al,n,j})}{2} \sin\frac{2k\tht_{\al,n,j} - \eta_\al(\tht_{\al,n,j})}{2}.
\end{align*}
Then $|v_{\al,n,j,k}|^2$ can be written as a sum of three terms:
\begin{equation}\label{eq:v_odd_abs_three_summands}
    \begin{aligned}
        |v_{\al,n,j,k}|^2 & = \frac{|1-\al|^2}{2}g(\tht_{\al,n,j}) (1+\cos((2k-1)\tht_{\al,n,j})) \\& \pheq+ \frac{|\al|^2}{2} \cdot  4\cos^2\frac{\eta_\al(\tht_{\al,n,j})}{2} (1-\cos(2k\tht_{\al,n,j} - \eta_\al(\tht_{\al,n,j}))) \\&\pheq + 4\left(|\al|^2-\Re(\al)\right) \sin\frac{\tht_{\al,n,j}}{2} \cos\frac{\eta_\al(\tht_{\al,n,j})}{2}\times\\&\pheq \times \left(\sin\frac{\eta_\al(\tht_{\al,n,j})-\tht_{\al,n,j}}{2} - \sin\left(2k\tht_{\al,n,j} - \frac{\eta_\al(\tht_{\al,n,j})+\tht_{\al,n,j}}{2}\right)\right).
    \end{aligned}
\end{equation}
Now we compute $\sum_{k=1}^n |v_{\al,n,j,k}|^2$ working separately with each of the three terms from~\eqref{eq:v_odd_abs_three_summands}. 
The sums involving $k\tht_{\al,n,j}$ are transformed by~\eqref{eq:sum_cosines}:
\begin{equation}
    \sum_{k=1}^{n}\cos((2k-1)\tht_{\al,n,j})= \frac{\sin(\eta_\al(\tht_{\al,n,j}))\cos(\eta_\al(\tht_{\al,n,j}))}{\sin(\tht_{\al,n,j})},
\end{equation}
\begin{equation}
        \sum_{k=1}^n\cos(2k\tht_{\al,n,j} - \eta_\al(\tht_{\al,n,j}))=  \frac{\sin(\eta_\al(\tht_{\al,n,j})) \cos(\tht_{\al,n,j}) }{\sin(\tht_{\al,n,j})},
    \end{equation}
\begin{equation}
\label{eq:sum3}
    \sum_{k=1}^n \sin\left(2k\tht_{\al,n,j} - \frac{\eta_\al(\tht_{\al,n,j})+\tht_{\al,n,j}}{2}\right)  = \frac{\sin(\eta_\al(\tht_{\al,n,j})) \sin\frac{\eta_\al(\tht_{\al,n,j}) + \tht_{\al,n,j}}{2}}{\sin(\tht_{\al,n,j})}.
\end{equation}
After some elementary simplifications we obtain~\eqref{eq:exact_norm_eigenvector_even}.
\end{proof}

In the next lemma we prove that the second term in~\eqref{eq:exact_norm_eigenvector_even} is uniformly bounded with respect to $n$ and $j$. 

\begin{lem}
\label{lem:second_term_norm_eigenvector}
There exists $C_{\al}>0$, depending only on $\al$, such that for every $n\ge3$ and every $j$ even, $2\le j\le n$,
\begin{equation}\label{eq:bound_second_eigvec_term}
    \left|\frac{\sin(\eta_\al(\tht_{\al,n,j}))}{\sin(\tht_{\al,n,j})}\xi_\al(\tht_{\al,n,j})\right|\le C_{\al}.
\end{equation}
\end{lem}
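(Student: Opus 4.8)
The plan is to observe that the apparently dangerous factor $\sin(\eta_\al(\tht_{\al,n,j}))/\sin(\tht_{\al,n,j})$ — dangerous because $\tht_{\al,n,j}$ may lie close to $0$ or to $\pi$, where $\sin(\tht_{\al,n,j})$ is small — is in fact a \emph{bounded} function of $\tht_{\al,n,j}$, while $\xi_\al$ is bounded for the trivial reason that it is continuous on a compact interval; the product is then controlled by a constant depending only on $\al$. The first step is to establish the elementary identity
\[
\frac{\sin(\eta_\al(x))}{\sin x}=\frac{\ka_\al}{\sin^2\frac{x}{2}+\ka_\al^2\cos^2\frac{x}{2}}\qquad(x\in(0,\pi)).
\]
This follows from $\tan\frac{\eta_\al(x)}{2}=\ka_\al\cot\frac{x}{2}$ (definition~\eqref{eq:eta}) together with the half-angle formula $\sin\theta=2\tan\frac{\theta}{2}/(1+\tan^2\frac{\theta}{2})$: one writes $\sin(\eta_\al(x))=2\ka_\al\cot\frac{x}{2}/(1+\ka_\al^2\cot^2\frac{x}{2})$ and multiplies numerator and denominator by $\sin^2\frac{x}{2}$, using $2\sin\frac{x}{2}\cos\frac{x}{2}=\sin x$. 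Alternatively the identity can be read off directly from~\eqref{eq:eta2}.

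Next I would note that $\sin^2\frac{x}{2}+\ka_\al^2\cos^2\frac{x}{2}$ is a convex combination of $1$ and $\ka_\al^2$, hence bounded below on $[0,\pi]$ by $\min\{1,\ka_\al^2\}>0$, so
\[
\left|\frac{\sin(\eta_\al(x))}{\sin x}\right|\le\frac{\ka_\al}{\min\{1,\ka_\al^2\}}=\max\left\{\ka_\al,\frac{1}{\ka_\al}\right\}=\cK_1(\al)\qquad(x\in(0,\pi)).
\]
Then I would observe that $\xi_\al$, defined in~\eqref{eq:xi_al}, is a finite combination of $g$, $\cos$, and $\eta_\al$ evaluated at $x$, $\eta_\al(x)$, and $x+\eta_\al(x)$; reading $g$ as the function $x\mapsto 2-2\cos x$ on all of $\bR$ and recalling that $\eta_\al$ is continuous on the closed interval $[0,\pi]$ (with $\eta_\al(0)=\pi$ and $\eta_\al(\pi)=0$), the function $\xi_\al$ is continuous on the compact interval $[0,\pi]$ and therefore bounded there. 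Since, by Theorems~\ref{thm:Localization_weak_eigenvals} and~\ref{thm:weak_characteristic_equation_L}, the point $\tht_{\al,n,j}$ belongs to $I_{n,j}\subset(0,\pi)$, both estimates apply at $x=\tht_{\al,n,j}$, and~\eqref{eq:bound_second_eigvec_term} holds with $C_\al=\cK_1(\al)\sup_{x\in[0,\pi]}|\xi_\al(x)|$.

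I do not expect a genuine obstacle in this argument: the only point that requires any attention is recognizing the cancellation that keeps $\sin(\eta_\al(x))/\sin x$ bounded near the endpoints $0$ and $\pi$, which is made transparent by the identity above. Everything else is an elementary trigonometric computation together with the boundedness of a continuous function on a compact set.
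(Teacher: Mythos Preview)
Your argument is correct. The paper's proof follows the same overall structure---bound $\xi_\al$ by continuity on $[0,\pi]$, and bound the quotient $\sin(\eta_\al(x))/\sin(x)$ separately---but it handles the quotient differently: it simply invokes l'H\^opital's rule to show that the quotient has finite limits at $0$ and at $\pi$, and concludes boundedness from continuity on $(0,\pi)$ together with those finite endpoint limits. Your route is more explicit: you derive the closed-form identity
\[
\frac{\sin(\eta_\al(x))}{\sin x}=\frac{\ka_\al}{\sin^2\frac{x}{2}+\ka_\al^2\cos^2\frac{x}{2}},
\]
from which the bound $\cK_1(\al)$ follows at once. This buys you a concrete constant (and hence a concrete $C_\al=\cK_1(\al)\sup_{[0,\pi]}|\xi_\al|$) and avoids any limiting argument; the paper's version is marginally shorter to state but does not produce an explicit value for the bound on the quotient.
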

\begin{proof}
Obviously, $\xi_\al$ is a bounded function on $[0,\pi]$.
By a simple application of l'H\^opital's rule, the quotient $\sin(\eta_{\al}(x))/\sin(x)$ has finite limits at $0$ and $\pi$, hence it is bounded on $[0,\pi]$.
This implies~\eqref{eq:bound_second_eigvec_term}.
\end{proof}

\begin{proof}[Proof of Theorem~\ref{thm:norm_eigvec}]
It is a well-known basic fact in the theory of laplacian matrices that the vector $[1,\ldots,1]^\top$ is an eigenvector associated to the eigenvalue $\la=0$.
From Proposition~\ref{prop:eigvec_tri_Toep_corner_per} we obtain~\eqref{eq:eivec_w}. 
In Lemma~\ref{lem:exact_norm_eigenvector_odd},~\eqref{eq:norm_eigvec_j_odd} has been proved.
From Lemmas~\ref{lem:exact_norm_eigenvector} and~\ref{lem:second_term_norm_eigenvector} we obtain~\eqref{eq:norm_eigvec_j_even}. 
\end{proof}

\section{Numerical experiments}\label{sec:num_exp}

With the help of Sagemath,
we have verified numerically (for many values of parameters) the representations~\eqref{eq:L_char_pol_via_Cheb},~\eqref{eq:char_pol_fact},~\eqref{eq:charpol_factorization_trig} for the characteristic polynomial,
the equivalence of the formulas~\eqref{eq:eta}, \eqref{eq:eta1}, \eqref{eq:eta2}, \eqref{eq:eta3}
for $\eta_\al$,
expressions~\eqref{eq:exact_norm_eigenvector_odd}, \eqref{eq:exact_norm_eigenvector_even} for the norms of the eigenvectors, 
and some other exact formulas of this paper.

The following web page (written in JavaScript and SVG) contains interactive analogs of Figures~\ref{fig:nx_plus_jpi_eq_eta} and \ref{fig:theta_vs_lambda}, where the user can choose the values of $\al$ and $n$.
\begin{center}
\myurl{https://www.egormaximenko.com/plots/laplacian\_of\_cycle\_eig.html}
\end{center}

We introduce the following notation for different approximations of the eigenvalues.
\begin{itemize}

\item $\la_{\al,n,j}^{\text{gen}}$ are the eigenvalues computed in Sagemath by general algorithms, with double-precision arithmetic.
\item $\la_{\al,n,j}^{\text{N}}
\eqdef g(\tht_{\al,n,j}^{\text{N}})$,
where $\tht_{\al,n,j}^{\text{N}}$ is the numerical solution of the equation $h_{\al,n,j}(x) = 0$ by Newton's method, see Theorem~\ref{thm:Newton}. 
We use $d_{n,j}$ as the initial approximation.
These computations are performed in the high-precision  arithmetic with $3322$ binary digits
($\approx 1000$ decimal digits).

\item Using $\tht_{\al,n,j}^{\text{N}}$ 
we compute $v_{\al,n,j}$ by~\eqref{eq:eivec_w}.

\item $\la_{\al,n,j}^{\text{bisec}}$
is similar to $\la_{\al,n,j}^{\text{N}}$,
but now we solve the equation $h_{\al,n,j}(x) = 0$ by the bisection method, see Proposition~\ref{prop:h_change_sign}.

\item $\la_{\al,n,j}^{\text{fp}}$
is computed similarly to $\la_{\al,n,j}^{\text{N}}$,
but solving the main equation by the fixed point iteration,
see Proposition~\ref{prop:Z_contractive_weak}.

\item $\la_{\al,n,j}^{\text{N},2}$ is computed similarly to
$\la_{\al,n,j}^{\text{N}}$, but using only two iterations of Newton's method. 

\item $\la_{\al,n,j}^{\text{asympt}}$
is the approximation given by~\eqref{eq:laasympt_w}.
\end{itemize}

We have constructed a large series of examples including all rational values $\al$ in $(0,1)$ with denominators $\le10$
and all $n$ with $3\le n\le 256$.
In all these examples, we have obtained
\[
\max_{1\le j\le n}
\|L_{\al,n}v_{\al,n,j}-\la_{\al,n,j}^{\text{N}}v_{\al,n,j}\|_2
<10^{-996},\qquad
\max_{1\le j\le n}|\la_{\al,n,j}^{\text{gen}}-\la_{\al,n,j}^{\text{N}}|
<10^{-13}.
\]
Moreover, in all examples
\[
\max_{1\le j\le n}
|\la_{\al,n,j}^{\text{N}}-\la_{\al,n,j}^{\text{bisec}}|
<10^{-998},
\]
and for $n>\cK_1(\al)$,
\[
\max_{1\le j\le n}
|\la_{\al,n,j}^{\text{fp}}-\la_{\al,n,j}^{\text{N}}|
<10^{-998}.
\]
For Theorem~\ref{thm:weak_asympt_weak},
we have computed the errors
\[
R_{\al,n,j}^{\text{asympt}}\eqdef \la_{\al,n,j}^{\text{asympt}}-\la_{\al,n,j}^{\mathrm{N}}
\]
and their maximums
$\|R_{\al,n}^{\text{asympt}}\|_\infty=\max_{1\le j\le n}|R_{\al,n,j}^{\text{asympt}}|$.
Table~\ref{table:errors_weak_asympt} shows that these errors indeed can be bounded by $O_\al(1/n^3)$.

\begin{table}[htb]
\caption{Values of $\|R_{\al,n}^{\text{asympt}}\|_\infty$
and $n^3 \|R_{\al,n}^{\text{asympt}}\|_\infty$
for some $\al$ and $n$.
\label{table:errors_weak_asympt}}
\[
\begin{array}{|c|c|c|}
\hline
\multicolumn{3}{ |c| }{\bigstrut\al=1/3}
\\\hline
\bigstrut n & \|R_{\al,n}^{\text{asympt}}\|_\infty &%
\hstrut{}n^3 \|R_{\al,n}^{\text{asympt}}\|_\infty\hstrut{} 
\\\hline
\medstrut 256 & 2.28\times10^{-6} & 38.24
\\
\medstrut 512 & 2.90\times10^{-7} & 38.86
\\
\medstrut 1024 & 3.65\times10^{-8} & 39.17 
\\
\medstrut 2048 & 4.58\times10^{-9} & 39.32 
\\
\medstrut\hstrut{}4096\hstrut{}&%
\hstrut{}5.73\times10^{-10}\hstrut{}&%
\hstrut{}39.40\hstrut{}
\\
\medstrut\hstrut{}8192\hstrut{}&%
\hstrut{}7.17\times10^{-11}\hstrut{}&%
\hstrut{}39.44\hstrut{}
\\
\hline
\end{array}
\qquad
\begin{array}{|c|c|c|}
\hline
\multicolumn{3}{ |c| }{\bigstrut\al=4/5}
\\\hline
\bigstrut n & \|R_{\al,n}^{\text{asympt}}\|_\infty &%
\hstrut{}n^3 \|R_{\al,n}^{\text{asympt}}\|_\infty\hstrut{}%
\\\hline
\medstrut 256 & 6.90\times10^{-7} & 11.58 \\
\medstrut 512 & 8.66\times10^{-8} & 11.62 \\
\medstrut 1024 & 1.08\times10^{-8} & 11.63 \\
\medstrut 2048 & 1.36\times10^{-9} & 11.64 \\
\medstrut\hstrut{}4096\hstrut{}&%
\hstrut{}1.69\times10^{-10}\hstrut{}&%
\hstrut{}11.64\hstrut{}\\
\medstrut\hstrut{}8192\hstrut{}&%
\hstrut{}2.12\times10^{-11}\hstrut{}&%
\hstrut{}11.64\hstrut{}\\
\hline
\end{array}
\]
\end{table}

Let $R_{\al,n,j}^{\text{N},2}\eqdef \la_{\al,n,j}^{\text{N},2}-\la_{\al,n,j}^{\mathrm{N}}$
and
$\|R_{\al,n}^{\text{N},2}\|_\infty=\max_{1\le j\le n}|R_{\al,n,j}^{\text{N},2}|$.
Table~\ref{table:errors_newton_2_iter} shows that these errors behave indeed as $O_\al(1/n^7)$.

\begin{table}[htb]
\caption{Values of $\|R_{\al,n}^{\text{N},2}\|_\infty$
and $n^7 \|R_{\al,n}^{\text{N},2}\|_\infty$
for some $\al$ and $n$.
\label{table:errors_newton_2_iter}}
\[
\begin{array}{|c|c|c|}
\hline
\multicolumn{3}{ |c| }{\bigstrut\al=1/3}
\\\hline
\bigstrut n & \|R_{\al,n}^{\text{N},2}\|_\infty &%
\hstrut{}n^7 \|R_{\al,n}^{\text{N},2}\|_\infty\hstrut{}%
\\\hline
\medstrut 256 & 4.13\times10^{-17} & 2.97 \\
\medstrut 512 & 3.26\times10^{-19} & 3.01 \\
\medstrut 1024 & 2.57\times10^{-21} & 3.03 \\
\medstrut 2048 & 2.01\times10^{-23} & 3.04 \\
\medstrut\hstrut{}4096\hstrut{}&%
\hstrut{}1.57\times10^{-25}\hstrut{}&%
\hstrut{}3.04\hstrut{}\\
\medstrut\hstrut{}8192\hstrut{}&%
\hstrut{}1.23\times10^{-27}\hstrut{}&%
\hstrut{}3.05\hstrut{}\\
\hline
\end{array}
\qquad
\begin{array}{|c|c|c|}
\hline
\multicolumn{3}{ |c| }{\bigstrut\al=4/5}
\\\hline
\bigstrut n & \|R_{\al,n}^{\text{N},2}\|_\infty &%
\hstrut{}n^7 \|R_{\al,n}^{\text{N},2}\|_\infty\hstrut{}%
\\\hline
\medstrut 256 & 6.30\times10^{-16} & 45.41\\
\medstrut 512 & 5.02\times10^{-18} & 46.33\\
\medstrut 1024 & 3.96\times10^{-20} & 46.80\\
\medstrut 2048 & 3.11\times10^{-22} & 47.04\\
\medstrut\hstrut{}4096\hstrut{}&%
\hstrut{}2.44\times10^{-24}\hstrut{}&%
\hstrut{}47.16\hstrut{}\\
\medstrut\hstrut{}8192\hstrut{}&%
\hstrut{}1.91\times10^{-26}\hstrut{}&%
\hstrut{}47.22\hstrut{}\\
\hline
\end{array}
\]
\end{table}

We have done similar tests for many other values of $\al$ and $n$. 
Numerical experiments show that $n^3\|R^{\text{asympt}}_{\al,n}\|_\infty$ and $n^7\|R^{\text{N}}_{\al,n}\|_\infty$
are bounded by some numbers depending on $\al$, and that numbers grow as $\al$ tends to $0$ or $1$. 

Let $R_{\al,n,j}^{\text{asympt},2}\eqdef \la_{\al,n,j}^{\text{N}}- (\frac{j^2\pi^2}{n^2} - \frac{2(1-\al)j^2\pi^2}{\al n^3})$.
Table~\ref{table:errors_asympt_first_eigenvalues} shows that these errors behave indeed as $O_\al(j^4/n^4)$.

\begin{table}[htb]
\caption{Values of $(n^4/j^4)|R_{\al,n,j}^{\text{asympt},2}|$ for $\al=1/3$, and some $n$ and even $j$.
\label{table:errors_asympt_first_eigenvalues}}
\[
\begin{array}{|c|c|c|c|}
\hline
\multicolumn{4}{ |c| }{\bigstrut\al=1/3}
\\\hline
\bigstrut n & (n^4/2^4) |R_{\al,n,2}^{\text{asympt},2}| &%
\hstrut{} (n^4/4^4) |R_{\al,n,4}^{\text{asympt},2}|\hstrut{} & (n^4/6^4) |R_{\al,n,6}^{\text{asympt},2}|
\\\hline
\medstrut 256 &
21.80 & 0.18 & 4.25 \\
\medstrut 512 &
21.65 & 0.44 & 4.53 \\
\medstrut 1024 &
21.57 & 0.58 & 4.67 \\
\medstrut 2048 &
21.53 & 0.65 & 4.75 \\
\medstrut\hstrut{}4096\hstrut{}&
\hstrut{}21.51  \hstrut{}&%
\hstrut{}0.68 \hstrut{}&
4.79\\
\medstrut\hstrut{}8192\hstrut{}&
\hstrut{}21.50 \hstrut{}&%
\hstrut{}0.70 \hstrut{}&
4.81\\
\hline
\end{array}
\]
\end{table}

\end{document}